\addspace\texttt{\mkbibbrackets{\thefield{arxivclass}}}}}}
\addspace\texttt{\mkbibbrackets{\thefield{arxivclass}}}}}}
\newtheorem{theorem}{Theorem}[section]%[section]
\newtheorem{lemma}[theorem]{Lemma}%[section]
\newtheorem{proposition}[theorem]{Proposition}%[section]
\newtheorem{definition}[theorem]{Definition}%[section]
\newtheorem{remark}[theorem]{Remark}
\def\rr{\mathbf{R}}
\def\ss{\mathbf{S}}
\def\bb{\mathbf{B}}
\def\O{\Omega}
\def\p{\partial}
\def\b{\beta}
\def\p{\partial}
\def\<{\langle}
\def\>{\rangle}
\def\div{{\rm div}}
\def\n{\nabla}
\def\De{\Delta}
\def\ep{\epsilon}
\def\om{\omega}
\def\Om{\Omega}
\def\De{\Delta}
\def\oN{\overline{N}}
\def\onu{\overline{\nu}}
\def\p{\partial}
\newcommand{\mbB}{\mathbf{B}}
\newcommand{\mbR}{\mathbf{R}}
\newcommand{\mbS}{\mathbf{S}}
\newcommand{\mfk}{\mathbf{k}}
\newcommand{\mfB}{\mathbf{B}}
\newcommand{\mfR}{\mathbf{R}}
\newcommand{\mfn}{\overline{N}}
\newcommand{\mfW}{\mathbf{W}}
\newcommand{\mfS}{\mathbf{S}}
\newcommand{\mcF}{\mathcal{F}}
\newcommand{\mcG}{\mathcal{G}}
\newcommand{\mcH}{\mathcal{H}}
\newcommand{\mcL}{\mathcal{L}}
\newcommand{\ra}{\rightarrow}
\newcommand{\rd}{{\rm d}}
\newcommand{\rdt}{{\rm dt}}
\numberwithin{equation}{section}
\begin{document}
	\title[Volume-constraint local energy-minimizing sets]{Uniqueness for volume-constraint local energy-minimizing sets in a half-space or a ball}
	\author{Chao Xia}
	\address{School of Mathematical Sciences\\
		Xiamen University\\
		361005, Xiamen, P.R. China}
	\email{chaoxia@xmu.edu.cn}
	\author{Xuwen Zhang}
	\address{School of Mathematical Sciences\\
		Xiamen University\\
		361005, Xiamen, P.R. China}
	\email{xuwenzhang@stu.xmu.edu.cn}
	\thanks{This work is  supported by NSFC (Grant No. 11871406, 12271449). 
	}
	
	\begin{abstract}
		In this paper, we prove a Poincar\'e-type inequality for any set of finite perimeter which is stable with respect to the free energy among volume-preserving perturbation, provided that the Hausdorff dimension of its singular set is  at most $n-3$.
		With this inequality, we classify all the volume-constraint local energy-minimizing sets in a unit ball, a half-space or a wedge-shaped domain. In particular, we prove that the relative boundary of any energy-minimizing set is smooth.
		
		\
		
		\noindent {\bf MSC 2010:} 49Q20, 28A75, 53A10, 53C24. \\
		{\bf Keywords:}  Capillary surfaces, Stability, Local minimizer, Poincar\'e inequality, Rigidity. \\
		
	\end{abstract}
	\maketitle
	
	\medskip
	
	%\tableofcontents

	\section{Introduction}
	
	The study of equilibrium shapes of a liquid confined in a given container has a long history. Since the work of Gauss, this subject has been studied through the introduction of a free energy functional. Precisely, for a liquid occupies a region $E$ inside a given container $\Om$, 
	its \textit{free energy} is given by
	\begin{align*}
		\sigma\left(P(E;\Om)-\beta P(E;\p\Om)\right)+\int_Eg(x)dx.
	\end{align*}
	%where $P$ is the perimeter functional as in \eqref{SFPinOm}.
	
Mathematically, we assume $\Om\subset\mfR^n$ is a fixed connected open set with boundary $\p\O$ and $E$ is a set of finite perimeter in $\Om$.
Here $\sigma\in\mfR_+$ denotes the \textit{surface tension} at the interface between this liquid and other medium filling $\Om$, $\beta\in\mfR$ is called \textit{relative adhesion coefficient} between the fluid and the container, which satisfies $|\b|< 1$ due to Young's law, $g$ is typically assumed to be the \textit{gravitational energy}, whose integral is called the \textit{potential energy}.
 The free energy functional is usually minimized under volume constraint, that is, the enclosed volume $|E|$ is a constant. The existence of global minimizers of the free energy functional under volume constraint is easy to be shown by the direct method in calculus of variations, see for example \cite[Theorem 19.5]{Mag12}. 	
	For our purpose, we assume throughout this paper that $\sigma=1, g=0$, that is, we consider the energy functional
	\begin{eqnarray}\label{Free-energyFunctional}
		\mathcal{F}_\beta(E;\Om)=P(E;\Om)-\beta P(E;\p\Om), \quad |\beta|<1.
	\end{eqnarray}
	In the case that $\O=\mathbf{R}^n_+$, a half-space, the global minimizers of $\mathcal{F}_\beta$ under volume constraint has been classified by De Giorgi, see for example \cite[Theorem 19.21]{Mag12}. In the case that $\O=\mathbf{B}^n$, a unit ball, and $\beta=0$, the global minimizers under volume constraint has been classified long time ago by Burago-Maz'ya \cite{BM67} and Bokowsky-Sperner \cite{BS79}.

	Provided $\overline{\p E\cap \O}$ is sufficiently smooth, the boundary of stationary points of $E$ for the corresponding variational problems are capillary hypersurfaces $\overline{\p E\cap \O}$, namely, constant mean curvature hypersurfaces intersecting $\p \O$ at constant contact angle $\theta=\arccos \beta$. For the reader who are interested in the physical consideration of capillary surfaces, we refer to Finn's celebrated monograph \cite{Finn86} for a detailed account.
	
	When $\beta=0$, $\mathcal{F}_\beta(E;\Om)$ reduces to the perimeter functional $P(E;\Om)$ of $E$ in $\O$. The structure and regularity of local minimizers of $P(E;\Om)$ under volume constraint has been studied by Gonzalez-Massari-Tamanini \cite{GMT83} and Gr\"uter \cite{GJ86,Gruter87}. It was shown that for any local minimizer $E$, $\overline{\p E\cap \O}$ is smooth in $\O$ away from a singular set of Hausdorff dimension at most $n-8$.
	Moreover, Sternberg-Zumbrun \cite{SZ98} has derived a Poincar\'e-type inequality for any local minimizer $E$, provided the singular set in $\overline{\p E\cap\Om}$ is of Hausdorff dimension at most $n-3$.
	By using this Poincar\'e-type inequality, they proved the connectness of local minimizers in convex domains, smoothness of local minimizers in $\mathbf{R}^n_+$ \cite{SZ98}, and  smoothness of local minimizers in $\mbB^n$ under the additional condition $|E|<\frac{1}{n-1}\mathcal{H}^{n-1}(\bar E\cap \ss^{n-1})$ (Such condition has been recently verified by Barbosa \cite{Barbosa18}).
 Sternberg-Zumbrun \cite{SZ98} have conjectured all the local minimizers in a convex domain are smooth.
	On the other hand, they constructed a local minimizer with singularity in a non-convex domain \cite{SZ18}.
	Recently, Wang-Xia \cite{WX19} classified all local minimizers in $\mbB^n$ to be either totally geodesic balls or spherical caps intersecting $\ss^{n-1}$ orthogonally.
	In particular, they proved the smoothness of local minimizers in $\mbB^n$. The classification for $n=3$ has been proved by Nunes \cite{Nunes17}. Note that for $n=3$, the local minimizers are a priori known to be smooth by virtue of \cite{GMT83,Gruter87}.
		We remark that, in the smooth setting, that is, provided $\overline{\p E\cap \O}$ is $C^2$, the Poincar\'e-type inequality is just nonnegativity for the second variational formula for $P(E;\Om)=\mathcal{H}^{n-1}(\p E\cap \O)$ under volume constraint and the stability problem has been first investigated by Ros-Vergasta \cite{RV95}.
	
	In this paper, we study the general case $|\beta|<1$. In the smooth setting, $$\mathcal{F}_\beta(E;\Om)= \mathcal{H}^{n-1}(\p E\cap \O)- \beta\mathcal{H}^{n-1}(\p E\cap \p \O).$$  As we have already mentioned, the relative boundary  $\overline{\p E\cap \O}$ of a stationary point $E$ is a capillary hypersurface. The second variational formula for $\mathcal{F}_\beta$ under volume constraint has been derived by Ros-Souam \cite{RS97}. Wang-Xia \cite{WX19} and Souam \cite{AS16, Souam21} classified all smooth local minimizers in $\mbB^n$ and $\mathbf{R}^n_+$ respectively.
The key ingredient in Wang-Xia and Souam's proof is Minkowski-type formula which gives rise to suitable test functions that are used in the nonnegativity for second variational formula.
 
	As in the case $\beta=0$, the local minimizers of $\mathcal{F}_\beta$ under volume constraint in the case $\beta\neq 0$ are not known a priori to be smooth. Recently, it has been shown by De Philippis-Maggi \cite{DePM17}  that for any local minimizer $E$, $\overline{\p E\cap \O}$ is smooth in $\O$ away from a closed singular set of Hausdorff dimension at most $n-3$. %\footnote{The result in \cite{DePM17} holds true even for anisotropic capillary problem.}.
	%Therefore, one needs to study the regularity of local minimizers as well as Sternberg-Zumbrun's Poincar\'e-type inequality in the case $|\beta|<1$.

	%-------------
	\begin{definition}\label{localmmmzer}
		\normalfont
		A set of finite perimeter $E\subset\Om$ is a \textit{local minimizer for the free energy functional \eqref{Free-energyFunctional} under volume constraint} if 
		\begin{align}
		\mcF_\beta(E;\Om)\leq\mcF_\beta(F;\Om),
		\end{align}
		among all sets of finite perimeter $F\subset\Om$ satisfying $\vert F\vert=\vert E\vert$ and $\vert F\De E\vert<\delta$ for some $\delta>0$.
	\end{definition}

 The main result in this paper is the classification of local minimizers of the free energy functional under volume constraint, when the container is a half-space or a ball.
	
	%Let us first state our main result when $\Om=\mfB^n$.
	
	\begin{theorem}\label{umbillic}
		Let $\O$ be  $\rr^{n}_+$ or $\bb^{n}$.  Let $E\subset\O$ be a local minimizer for the free energy functional \eqref{Free-energyFunctional} under volume constraint among sets of finite perimeter.
		Then $M=\overline{\p E\cap\O}$ is (up to a modification of sets of measure zero for $E$) either part of a totally geodesic hyperplane or part of a sphere, which intersects with $\p\O$ at the contact angle $\theta=\arccos\beta$. In particular, $M$ is smooth.
	\end{theorem}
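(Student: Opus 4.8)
The plan is to combine the regularity theorem of De Philippis--Maggi \cite{DePM17} with the Poincar\'e-type inequality stated in the abstract, and then to feed into that inequality the test functions manufactured from Minkowski-type formulas in the spirit of Wang--Xia \cite{WX19} and Souam \cite{AS16,Souam21}. First I would record that a volume-constraint local minimizer is in particular \emph{stable}, so the Jacobi quadratic form $Q(\vp)$ of $\mcF_\beta$ is nonnegative along every admissible volume-preserving direction; by \cite{DePM17} the singular set $\Sigma\subset\overline{\p E\cap\O}$ is closed with $\dim_{\mathcal H}\Sigma\le n-3$, which is exactly the hypothesis under which the Poincar\'e-type inequality applies. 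Writing $M^\ast=\p E\cap\O$ for the smooth part, with outward unit normal $\nu$, second fundamental form $A$ and mean curvature $H$, the goal reduces to showing that $M^\ast$ is totally umbilic and meets $\p\O$ at the constant angle $\th=\arccos\beta$; umbilicity will then pin $M^\ast$ inside a sphere or a hyperplane.

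To produce the right test function I would use the position vector field $X$ restricted to $M^\ast$. In the ball case the capillary Minkowski identity reads, schematically,
\begin{align*}
\int_{M^\ast}\bigl((n-1)-H\,\langle X,\nu\rangle\bigr)\,d\mathcal H^{n-1}=\int_{\p M^\ast}\langle X,\mu\rangle\,d\mathcal H^{n-2},
\end{align*}
where $\mu$ is the outer unit conormal of $\p M^\ast$ and the boundary term is adapted to the contact angle; in the half-space case the analogous identity is obtained from a translation field tangent to $\p\O$ together with the height function. Using such an identity I would take $\vp$ to be a suitable affine normalization of the support function $\langle X,\nu\rangle$, shifted by a constant so as to satisfy the mean-zero constraint $\int_{M^\ast}\vp\,d\mathcal H^{n-1}=0$ that characterizes volume-preserving directions. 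The point is that for this particular $\vp$ the interior term and the capillary boundary term of $Q$ combine, after an integration by parts and the Minkowski identity, into an expression controlled by the trace-free second fundamental form $\mathring A=A-\tfrac{H}{n-1}g$.

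Next I would insert $\vp$ into the Poincar\'e-type inequality. After evaluating the gradient and potential integrals via the Minkowski formula and a Cauchy--Schwarz comparison of Heintze--Karcher type, the computation should collapse to
\begin{align*}
0\le Q(\vp)=-\int_{M^\ast}|\mathring A|^2\,\omega\,d\mathcal H^{n-1}\le 0,\qquad \omega\ge 0,
\end{align*}
forcing $\mathring A\equiv 0$ on $M^\ast$. A totally umbilic hypersurface in $\mfR^n$ is an open piece of a round sphere or of a hyperplane, and the first-variation (stationarity) condition forces the contact angle with $\p\O$ to equal $\arccos\beta$; connectedness of the minimizer then guarantees that all of $M^\ast$ lies on a single such sphere or hyperplane, recovering the smooth classification of \cite{WX19,AS16,Souam21}. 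Finally, since this model sphere or hyperplane is itself smooth while $M^\ast$ is relatively open and dense in $M$ with $\Sigma=M\setminus M^\ast$ closed of dimension $\le n-3$, the set $M$ coincides with the closure of an umbilic piece and $\Sigma$ must be empty; hence $M$ is smooth.

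The hard part will be the passage from the smooth computation to the singular setting: the chosen $\vp$ is not compactly supported in $M^\ast$, so to legitimately use it in the Poincar\'e-type inequality I would approximate it by $\vp\,\eta_\e$, where $\eta_\e$ is a logarithmic cutoff vanishing near $\Sigma$. The bound $\dim_{\mathcal H}\Sigma\le n-3$ is precisely what forces the cutoff error terms, which involve the $(n-1)$-capacity of a neighborhood of $\Sigma$, to tend to zero, so that the limiting inequality holds for $\vp$ itself while the mean-zero constraint is preserved in the limit. A secondary technical point is to arrange the affine normalization of the support function so that the admissibility (mean-zero) condition and the clean reduction to $|\mathring A|^2$ hold simultaneously in each of the two geometries $\O=\mfR^n_+$ and $\O=\mbB^n$.
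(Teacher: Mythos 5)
Your overall strategy coincides with the paper's: De Philippis--Maggi regularity ($\dim_{\mathcal H}\mathrm{sing}\,M\le n-3$, $\|h\|\in L^2(M)$, Euclidean volume growth), cut-off functions near the singular set to justify both the Poincar\'e-type inequality and the Minkowski-type identities, and then the Wang--Xia/Souam test function built from the support function. The paper uses a covering-by-balls cutoff controlled by the volume growth rather than a logarithmic/capacity cutoff, but these are interchangeable. One caveat on your normalization: you cannot simply shift $\langle X,\nu\rangle$ by a constant to achieve mean zero, because an arbitrary shift destroys the cancellation that produces the $|\mathring A|^2$ weight; the correct test functions are $(n-1)-H\langle x,\nu\rangle+(n-1)\langle \mathbf{k},\nu\rangle$ in the half-space and $(n-1)\langle x+\cos\theta\,\nu,a\rangle-H\langle X_a,\nu\rangle$ in the ball, and their zero mean is forced by the Minkowski identity rather than arranged by hand. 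You flag this as a ``secondary technical point,'' but it is where the real computation lives: the paper also needs an auxiliary function $\Phi$, a further cutoff-justified integration by parts, and the balancing formula to kill the cross term $\int_M\{(n-1)\|h\|^2-H^2\}H\langle x,\nu\rangle\,\rd\mcH^{n-1}$ before the weight in your $-\int|\mathring A|^2\omega$ becomes manifestly nonnegative (it is $1+\langle\mathbf{k},\nu\rangle$ in the half-space and $(n-1)|x^T|^2$ in the ball).

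The genuine gap is your final step. From $(n-1)\|h\|^2=H^2$ on $\mathrm{reg}\,M$ you conclude that ``connectedness of the minimizer'' forces all of $M$ onto a single sphere or hyperplane. But a volume-constraint local minimizer is not known a priori to be connected, and umbilicity is a purely local statement: at this stage $M$ could still be a union of several spherical caps (or caps together with closed spheres, or several flat disks), with the singular set sitting where distinct pieces meet. The paper must do real work here: transversally intersecting caps are excluded because their intersection would have positive $\mathcal H^{n-3}$-measure, contradicting $\mathcal H^{n-3}(\mathrm{sing}\,M)=0$; disjoint or tangent unions are excluded by an instability argument, testing the Poincar\'e-type inequality with a function equal to a first Robin eigenfunction (whose eigenvalue is negative) on one component and to $-f$ or a suitable constant on the other, which produces a strictly negative second variation. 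Without such an argument your proof establishes only that each component of $\mathrm{reg}\,M$ is umbilic, not the classification of $M$ nor its smoothness, since the emptiness of the singular set is itself a consequence of $M$ being a single cap.
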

%\begin{remark}\label{Remark-Nontrivial}
%		\normalfont
%		It is possible that $E$ intersects $\p\Om$ in a trivial way, namely, $P(E; \p\O)=0$. In this situation, the free energy functional reduces to the perimeter functional $P(E;\Om)$ and $E$ is thus a local minimizer for the isoperimetric problem, which is well-known, see for example \cite{SZ98} and \cite{BdC84}. Thus in this paper, we are concerned with the case that $E$ intersects $\p\Om$ in a non-trivial way, i.e., $P(E; \p\O)\neq0$.
%	\end{remark}
 
Our main strategy to prove \cref{umbillic} is as follows. 
First, following Sternberg-Zumbrun \cite{SZ98}, we prove a Poincar\'e-type inequality for local minimizers of $\mathcal{F}_\beta$ under volume constraint, see  \cref{Prop-StableSet}. In order to establish such inequality, we crucially make use of De Philippis-Maggi's \cite{DePM15, DePM17}  Hausdorff estimate for singular set and local Euclidean volume growth property for local minimizers to construct useful cut-off functions, see \cref{Lem-Cut-off}. 
We remark that the Poincar\'e-type inequality in  \cref{Prop-StableSet} holds provided some technical integrability condition \eqref{condi-integrability} on test functions. 
Second, we extend the Minkowski-type formula of Wang-Xia \cite{WX19} and Souam \cite{AS16, Souam21} to the  singular setting, see \cref{PropMinkowski-Poly} and \cref{Prop-Minko-Ball}. 
An important observation is that the test function arising from the Minkowski-type formula satisfies the integrability condition, which enables us to utilize the Poincar\'e-type inequality. Third, the same procedure of Wang-Xia \cite{WX19} and Souam \cite{AS16, Souam21} leads to the conclusion that ${\rm reg}M$ is spherical.

We remark that our proof for the half-space case also works for the wedge case. In fact, we shall handle the wedge case directly in \cref{Sec-5}. In the smooth setting, the corresponding stability problem has been investigated by Li-Xiong \cite{LX17} and Souam \cite{Souam21}.
%---------------------------

The paper is organized as follows.
In \cref{Sec2} we recall some background materials about sets of finite perimeter and review a few useful results on local minimizers for the free energy functional recently developed by De Philippis-Maggi \cite{DePM15, DePM17}.
In \cref{Sec3} we construct the crucial cut-off functions in \cref{Lem-Cut-off}, and prove tangential divergence theorem on singular hypersurfaces.
In \cref{Sec-4}, we prove that the stationary set of the free energy functional under volume-constraint admits a singular capillary CMC hypersurface (\cref{Prop-capillaryCMC}), and the stable set admits the Poincar\'e-type inequality (\cref{Prop-StableSet}).
In \cref{Sec-5}, we prove \cref{umbillic} in the half-space case, and also in a more general setting, the wedge case.
In \cref{Sec-6}, we prove \cref{umbillic} in the ball case.
	
\vspace{0.5cm}

{\noindent \bf Acknowledgments.} The first author is grateful to Professor Guofang Wang for useful discussion on this subject and his constant support. We would like to thank Professor Peter Sternberg for answering our questions regarding their paper \cite{SZ98}. We also would like to thank the anonymous referee for pointing out to us the boundary regularity results by De Philipis and Maggi \cite{DePM15,DePM17} for local minimizers of anisotropic free energy functional under volume constraint.
	
	\
	
	\section{Preliminaries}\label{Sec2}

 \subsection{Notation}\label{Notation}
In all follows, we denote by $\left<\cdot,\cdot\right>$, ${\rm div},\nabla$, the inner product, the divergence operator, the gradient operator in $\mbR^n$, respectively. We denote by $\mcH^k$ the $k$-dimensional Hausdorff measure in $\mbR^{n}$. 
 We denote by $\mbB^n$  the $n$-dimensional unit ball, by $\mbS^{n-1}$ the $(n-1)$-dimensional unit sphere in $\mbR^n$,  by $B_r(x)$ a $n$-dimensional open ball in $\mbR^n$ with radius $r$ and centered at $x$, 
by $\omega_{n}$ the volume of $n$-dimensional Euclidean unit ball.

For a set $E\subset\mbR^n$, we denote by $\vert E\vert$ its $n$-dimensional Lebesgue measure,
$\chi_E$ denotes the indicator function of $E$.
We adopt the following notations when considering the topology of $\mfR^n$:
we denote
by $\overline{E}$ the topological closure of a set $E$, by ${\rm int}(E)$ the topological interior of $E$, by $E^c$ the topological complement of $E$, by $\p E$ the topological boundary of $E$ and by $E\De F$ the difference of two sets $E, F$.
In terms of the subspace topology (relative topology), we use the following notations.
 Let $X$ be a topological space and $S$ be a subspace of $X$. We use ${\rm cl}_XS, {\rm int}_XS, \p_XS$ to denote the closure, the interior and the boundary, respectively, of $S$ in the topological space $X$.
	
For the constraint problem, the container $\Om\subset\mbR^{n}$ is assumed to be a connected  (possibly unbounded) open set with $C^{2,\alpha}$-boundary $\p\Om$. Let $E\subset\Om$ be a set with finite volume and perimeter,
let $M$ denote the closed set $\overline{\p E\cap\Om}$, %let ${\rm reg}M$ denotes the $C^2$ part of $M$ in $\overline{\Om}$, let ${\rm sing}M=M\setminus{\rm reg}M$ denote the singular part of $M$;
let $B^+$ denote the set $\p E\setminus M$, which is open in the subspace topology;
let $\Gamma$ denote the set $M\cap\p\Omega$.
let $\nu, \oN$ denote the outwards pointing unit normal of $M, B^+$, respectively, when they exist; $\mu, \onu$ denote the outwards pointing unit conormal of $\Gamma$ in $M,B^+$, respectively
	(see also \cref{figure1}).
Let $h$ denote the second fundamental form of ${\rm reg}M$ in $\mbR^n$ with respect to $-\nu$ (that is, $h(X,Y)=\left<\nabla_X\nu,Y\right>$ for any $X,Y\in TM$) and $h^{\p\Om}$ denotes the second fundamental form of $\p\Om$ with respect to the inwards pointing unit normal $-\oN$, $\vert\vert h\vert\vert^2=\sum_{i=1}^{n-1}\kappa_i^2$, where $\{\kappa_i\}$ are the principal curvatures of $M$. When taking an orthonormal basis $\{\tau_i\}_{i=1}^{n-1}$ on $TM$, the mean curvature $H$ of $M$ with respect to $h$ is given by $H=\sum_{i=1}^{n-1}h(\tau_i,\tau_i)$.
 
	\begin{figure}[h]
	\centering
	\includegraphics[height=8cm,width=14cm]{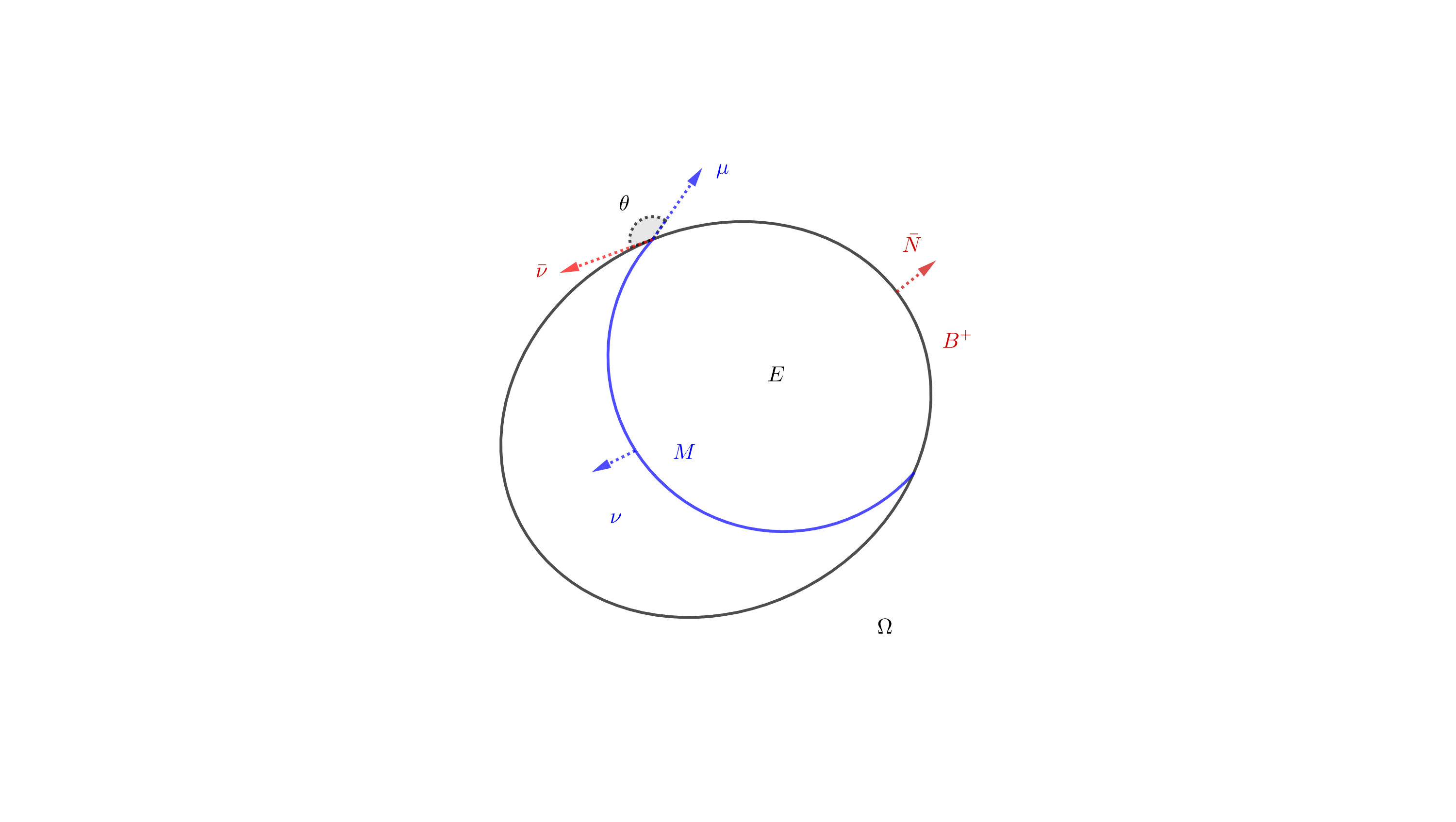}
	\caption{Notations}
	\label{figure1}
\end{figure}

	\subsection{Sets of finite perimeter}\label{Sec-sfp}
 In this subsection we collect some background materials for sets of finite perimeter, we refer to \cite[Chapter 17]{Mag12} for a detailed account.
	
	Let $E\subset \rr^n$ be a Lebesgue measurable set in $\mfR^n$, we say that \textit{$E$ is a set of finite perimeter in $\mfR^n$} if
 \begin{align*}
     \sup\left\{\int_{\mfR^n} {\rm div}X\rd\mcL^{n}: X\in C^1_c(\mfR^n;\mfR^n),  \vert X\vert\leq1\right\}<\infty.
 \end{align*}
 An equivalent characterization of sets of finite perimeter (see \cite[Proposition 12.1]{Mag12}) is that: there exists a $\mfR^n$-valued Radon measure $\mu_E$ on $\mfR^n$ such that for any $X\in C_c^1(\mfR^n;\mfR^n)$,
 \begin{align*}
     \int_E{\rm div}X
     =\int_{\mfR^n}\left<X,\rd\mu_E\right>.
 \end{align*}
$\mu_E$ is called the \textit{Gauss-Green measure of $E$}.
The \textit{relative perimeter of $E$ in $F\subset\mfR^n$}, and the \textit{perimeter of $E$}, are defined as
\begin{align*}
    P(E;F)=\vert\mu_E\vert(F),\quad P(E)=\vert\mu_E\vert(\mfR^n).
\end{align*}

Regarding the topological boundary of a set of finite perimeter $E$, one has (see \cite[Proposition 12.19]{Mag12})
\begin{align*}
    {\rm spt}\mu_E
    =\{x\in\mfR^n:0<\vert E\cap B_r(x)\vert<\om_nr^n,\quad\forall r>0\}\subset\p E.
\end{align*}

The \textit{reduced boundary} $\p^\ast E$ is the set of those $x\in{\rm spt}\mu_E$ such that the limit
\begin{align*}
    \lim_{r\ra0^+}\frac{\mu_E(B_r(x))}{\vert\mu_E\vert(B_r(x))}\text{ exists and belongs to }\mfS^{n-1}.
\end{align*}
A crucial fact (see \cite[(15.3)]{Mag12}) we shall use for our main result \cref{umbillic} is that, for a set of finite perimeter $E$, up to modification of sets of measure zero,
\begin{align*}
    \overline{\p^\ast E}=\p E.
\end{align*}
%-------------------------
	\subsection{Regularity results for local minimizers} In this subsection, we summarize some known results for local minimizers of the free energy functional under volume constraint.
\begin{definition}
    \normalfont
    Let $\Om$ be an open, connected set in $\mfR^n$ with $C^{2,\alpha}$-boundary $\p\Om$, let $E\subset\Om$ be a set of finite perimeter and set $M=\overline{\p E\cap\Om}$.
    The \textit{regular part of $M$} is defined by
    \begin{align*}
        {\rm reg}M=\{x\in M:\text{ there exists an }r_x>0\text{ such that }{\rm reg}M\cap B_{r_x}(x)\\
        \text{is a }C^2\text{-manifold with boundary contained in }\p\Om\},
    \end{align*}
    while ${\rm sing}M=M\setminus{\rm reg}M$ is called the \textit{singular set of $M$}.
    In this way, ${\rm sing}M$ is relatively closed in $M$.
\end{definition}

The following Hausdorff dimensional estimate for singular sets of local minimizers has been proved by De Phillipis-Maggi \cite{DePM15,DePM17}.
\begin{theorem}[{\cite[Theorem 1.5, Lemma 2.5]{DePM17}}]\label{regularity-thm}
		Let $E\subset \O$ be a local minimizer of the free energy functional \eqref{Free-energyFunctional}  under volume constraint. Let $M=\overline{\p E\cap \O}$.  Then 
%$M={\rm reg}M\cup{\rm sing}M$, where ${\rm reg}M$ is a $C^2$ hypersurface and ${\rm sing}M=M\setminus {\rm reg}M$ is  a relatively closed set such that 
${\rm sing}M=\varnothing$ if $n=3$, while $\mcH^{n-3}({\rm sing}M)=0$ for any $n>3$. Moreover, the second fundamental form of ${\rm reg}M$ satisfies $\|h\|\in L^2(M)$.
  % Moreover, $\mcH^{n-8+\g}({\rm sing}(\p E\cap \O))=0$ for any $\g>0$. When $\beta=0$, $\mcH^{n-8+\g}({\rm sing}(\overline{\p E\cap \O}))=0$ for any $\g>0$.
	\end{theorem}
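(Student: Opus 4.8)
Since this statement is quoted from De Philippis--Maggi \cite{DePM15,DePM17}, my plan is to reconstruct the architecture of their boundary regularity theory rather than to reprove the classical interior theory in isolation. The first step is to remove the volume constraint. Using the standard volume-adjusting competitor construction (as in \cite{Mag12}, and \cite{GMT83} for the case $\beta=0$), I would show that a volume-constrained local minimizer $E$ of $\mcF_\beta$ is in fact an unconstrained $(\Lambda,r_0)$-minimizer: there exist $\Lambda,r_0>0$ so that
\begin{equation*}
\mcF_\beta(E;\Om)\le\mcF_\beta(F;\Om)+\Lambda\vert E\De F\vert
\end{equation*}
whenever $E\De F\subset\subset B_{r_0}(x)$ with $x\in\overline\Om$. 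On small scales the penalization $\Lambda\vert E\De F\vert$ is lower order relative to the perimeter, so $E$ inherits the full local regularity theory of almost-minimizers of the capillary functional.

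Next I would split $M$ according to a local $\varepsilon$-regularity dichotomy. At interior points $x\in M\cap\Om$ the regularity theory for almost-minimizers of perimeter (Tamanini; see \cite{Mag12}) yields that $\p^\ast E$ is locally $C^{1,\gamma}$, and since the Euler--Lagrange equation forces the mean curvature to be weakly constant, Schauder estimates upgrade this to $C^{2,\alpha}$, with interior singularities of dimension at most $n-8$. The genuinely new ingredient is the behaviour at boundary points $x\in\Gamma=M\cap\p\Om$: here one flattens $\p\Om$ and performs the De Philippis--Maggi reflection across the flattened boundary, which linearizes the contact-angle condition $\theta=\arccos\beta$ and turns the one-sided capillary problem into a two-sided almost-minimality problem supporting a monotonicity formula. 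The crux is an $\varepsilon$-regularity theorem: if a suitable spherical excess of $E$ at $x$ lies below a dimensional threshold, then near $x$ the set $M$ is a $C^{1,\gamma}$, hence $C^{2,\alpha}$, hypersurface-with-boundary meeting $\p\Om$ at the constant angle $\theta$, so that Young's law holds pointwise. This identifies ${\rm reg}M$ with the small-excess points and ${\rm sing}M$ with the closed set of high-excess points.

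With the monotonicity formula available, at each $x\in{\rm sing}M$ the blow-ups would converge to a minimizing capillary cone, and Federer's dimension-reduction argument bounds $\dim_{\mcH}{\rm sing}M$ by $n-1-k^\ast$, where $k^\ast$ is the smallest dimension admitting a nontrivial singular minimizing capillary cone. The hard part --- and the technical heart of \cite{DePM15,DePM17} --- is exactly the classification of low-dimensional minimizing capillary cones, namely the statement that in the relevant half-space model every such cone is a half-hyperplane down to boundary codimension $3$. Combining the interior bound ($\le n-8$) with this boundary bound gives $\mcH^{n-3}({\rm sing}M)=0$ for $n>3$; for $n=3$ no admissible singular cone exists, which forces ${\rm sing}M=\varnothing$.

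Finally I would deduce $\|h\|\in L^2(M)$ from the preceding steps. The $\varepsilon$-regularity estimates provide scale-invariant curvature control on ${\rm reg}M$, of the form $\vert h\vert(y)\le C\,{\rm dist}(y,{\rm sing}M)^{-1}$. Because ${\rm sing}M$ has Hausdorff dimension at most $n-3$, it has codimension at least $2$ inside the $(n-1)$-dimensional $M$; decomposing a tubular neighbourhood of ${\rm sing}M$ dyadically and integrating $\vert h\vert^2\le C\,{\rm dist}(\cdot,{\rm sing}M)^{-2}$, the condition $\mcH^{n-3}({\rm sing}M)=0$ (equivalently, zero $W^{1,2}$-capacity, since $n-3<n-2$) makes the contribution of each dyadic shell vanish in the limit. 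Since $M$ has finite area and bounded curvature away from ${\rm sing}M$, this yields $\int_M\|h\|^2<\infty$. I expect the capillary-cone classification of the third step to be by far the most delicate part, with the reflection-and-monotonicity construction of the second step as its indispensable prerequisite.
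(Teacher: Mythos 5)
First, a point of comparison: the paper does not prove this statement at all --- it is quoted from De~Philippis--Maggi, and the only ingredient the authors themselves supply is the observation (\cref{regularity-thm}'s following remark) that a volume-constrained local minimizer is an almost-minimizer, which is exactly your first step. Judged as a reconstruction of the cited proof, your overall architecture (passage to $(\Lambda,r_0)$-minimality, interior Tamanini theory, a boundary $\varepsilon$-regularity theorem yielding Young's law, monotonicity, blow-up to cones, and Federer dimension reduction down to the nonexistence of singular capillary cones in the two-dimensional model) is consistent with \cite{DePM15,DePM17}. One caveat: for $\beta\neq 0$ a literal reflection does not linearize the contact-angle condition; the boundary analysis in \cite{DePM15} instead extends $E$ across the flattened wall and shows that the extended set is an almost-minimizer of a suitable \emph{anisotropic} perimeter on a full ball, and then invokes the anisotropic $\varepsilon$-regularity theory.

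The step that genuinely fails as written is the last one, $\|h\|\in L^2(M)$. With ${\rm sing}M$ of Hausdorff dimension $n-3$, i.e.\ codimension $2$ inside the $(n-1)$-dimensional $M$, the pointwise bound $|h|\le C\,{\rm dist}(\cdot,{\rm sing}M)^{-1}$ is exactly critical for square-integrability: the dyadic shell $\{2^{-j-1}<{\rm dist}(\cdot,{\rm sing}M)<2^{-j}\}$ contributes on the order of $4^{j}\,\mcH^{n-1}\bigl(\{{\rm dist}<2^{-j}\}\bigr)\sim 4^{j}\cdot(2^{-j})^{2}=O(1)$, so the sum over $j$ diverges logarithmically. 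The hypothesis $\mcH^{n-3}({\rm sing}M)=0$ improves each shell's contribution to $o(1)$, but a series of $o(1)$ terms need not converge, so ``each shell vanishes in the limit'' does not conclude. The standard (and here intended) route is different: use $\mcH^{n-3}({\rm sing}M)=0$ and the volume growth to build cutoffs with $\int_M|\nabla^M\varphi_\epsilon|^2\,\rd\mcH^{n-1}\le C\epsilon$ (precisely \cref{Lem-Cut-off} with $q=2$), insert $\varphi_\epsilon$ as a test function into the stability/second-variation inequality to obtain $\int_M\|h\|^2\varphi_\epsilon^2\,\rd\mcH^{n-1}\le C$ uniformly in $\epsilon$ (after correcting for the zero-mean constraint), and conclude by Fatou. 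This uses minimality, not merely the pointwise curvature decay; it is how the $L^2$ bound is obtained in \cite{SZ98} for $\beta=0$ and in the capillary setting of \cite{DePM17}.
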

	\begin{remark}
 \normalfont
	    Note that De Phillipis-Maggi's result is stated for so-called almost-minimizers. Nevertheless, it is known that a local-minimizer under volume constraint is an almost-minimizer, see for example \cite[Example 21.3]{Mag12}.
	\end{remark}
 \begin{remark}\label{remark1}
		\normalfont
		Notice that by virtue of $\mcH^{n-3}({\rm sing}M)=0$, the integrals $\int_{M\cap\Om}\cdot \rd\mcH^{n-1}(x)$ and $\int_{{\rm reg}M\cap\Om}\cdot \rd\mcH^{n-1}$
		%or $\int_{M\cap\p\Om}\cdot d\mcH^{n-2}(x)$ and $\int_{{\rm reg}M\cap\p\Om}\cdot d\mcH^{n-2}$
		are exactly the same things.
		Also, since $B^+$ is $C^2$ in $\p\Om$ and $\Gamma= M\cap\p\Om$, we have: $\int_{B^+}\cdot \rd\mcH^{n-1}=\int_{{\rm reg}B^+}\cdot \rd\mcH^{n-1}$ and $\int_\Gamma\cdot \rd\mcH^{n-2}=\int_{\rm reg \Gamma}\cdot \rd\mcH^{n-2}$.
		Here ${\rm reg}B^+$ denotes the regular part of $B^+$ and ${\rm reg}\Gamma={\rm reg}M\cap \Gamma$.
	\end{remark}
 
 \begin{definition}[Euclidean volume growth]\label{Defn-LocalVolumeGrowth}
	    \normalfont
	% Given an open set $\Om\subset\mfR^n$ with $C^2$ boundary $\p\Om$.
	    For a set of finite perimeter $E\subset\Om$,
	    let $M=\overline{\p E\cap\Om}$ and $\Gamma=M\cap\p\Omega$.
	      We say that \textit{$M$ satisfies the Euclidean volume growth condition} if for any $x\in M$,  there exists some universal constant (depending only on $\Om$ and $n$) $R_1>0$ and some universal constant $C_1>0$ such that for any $0<r<R_1$, there holds
	    \begin{align}\label{ineq-vg-M}
	        \mcH^{n-1}(M\cap B_r(x))\leq C_1r^{n-1}.
	    \end{align}
	     We say that \textit{$\Gamma$ satisfies the Euclidean volume growth condition} if for any $x\in \Gamma$,  there exists some universal constant $R_2>0$ and some universal constant $C_2>0$ such that for any $0<r<R_2$, there holds
	    \begin{align}\label{ineq-vg-Gamma}
	        \mcH^{n-2}(\Gamma\cap B_r(x))\leq C_2r^{n-2}.
	    \end{align}
	\end{definition}
 
We need the following result, due to De Phillipis-Maggi \cite{DePM15}, on the Euclidean volume growth for local minimizers.
	\begin{theorem}[{\cite[Lemma 2.8 and Lemma 2.10]{DePM15}}]\label{eucl-vol-growth}
		Let $E\subset \O$ be a local minimizer of the free energy functional \eqref{Free-energyFunctional}  under volume constraint.   Then $M=\overline{\p E\cap\Om}$ and $\Gamma=M\cap\p\Omega$ satisfy the Euclidean volume growth condition \eqref{ineq-vg-M} and \eqref{ineq-vg-Gamma} respectively.
\end{theorem}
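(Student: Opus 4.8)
The plan is to reduce the whole statement to a family of comparison (cut-and-fill) arguments powered by the almost-minimality of $E$. By the remark following \cref{regularity-thm} (cf. \cite[Example 21.3]{Mag12}), a volume-constrained local minimizer is a $(\La,r_0)$-minimizer of the \emph{unconstrained} free energy: there exist $\La>0$ and $r_0>0$ such that
\[
\mcF_\beta(E;\Om) \leq \mcF_\beta(F;\Om) + \La\vert E\De F\vert
\]
whenever $E\De F\subset\subset B_r(x)$ with $r<r_0$; since every competitor below differs from $E$ only inside $B_r(x)$, this localizes to the same inequality with $\Om$ replaced by $B_r(x)$. With this in hand I would first prove the interior bound \eqref{ineq-vg-M} for $x\in M$ with $B_r(x)\subset\Om$, where the capillary term is inert. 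Taking the competitor $F=E\setminus B_r(x)$, whose relative perimeter inside the open ball reduces to the spherical slice, and using the coarea identity
\[
\frac{\rd}{\rd r}\vert E\cap B_r(x)\vert = \mcH^{n-1}\big(E^{(1)}\cap\p B_r(x)\big),
\]
the almost-minimality yields
\[
P(E;B_r(x)) \leq \mcH^{n-1}\big(E^{(1)}\cap\p B_r(x)\big) + \La\vert E\cap B_r(x)\vert \leq n\om_n r^{n-1} + \La\om_n r^n,
\]
which is \eqref{ineq-vg-M} with a dimensional constant once $R_1\leq\min\{r_0,1\}$.

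For points $x$ near $\p\Om$ the competitor $F=E\setminus B_r(x)$ also alters the wetted region, so the term $-\beta P(\,\cdot\,;\p\Om)$ now enters the comparison. Here I would exploit two facts: that $\vert\beta\vert<1$, which makes $\mcF_\beta(\,\cdot\,;\Om)$ two-sided comparable to the perimeter up to the factor $1-\vert\beta\vert$, and that $\p\Om$ is $C^{2,\alpha}$, whence
\[
\mcH^{n-1}(\p\Om\cap B_r(x)) \leq C r^{n-1}
\]
with $C$ depending only on $\Om$ and $n$. Rerunning the comparison and absorbing the boundary contribution via these two bounds promotes \eqref{ineq-vg-M} to a bound that is uniform up to $\p\Om$.

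The delicate part is \eqref{ineq-vg-Gamma}, the $(n-2)$-dimensional growth of the contact set $\Gamma=M\cap\p\Om$. The idea is to recognize $\Gamma$ as the relative (reduced) boundary, inside the hypersurface $\p\Om$, of the trace $B^+=\p E\setminus M$, so that it behaves like a codimension-one reduced boundary living on the $C^{2,\alpha}$ manifold $\p\Om$. I would flatten $\p\Om$ locally through its $C^{2,\alpha}$ chart, transplant the minimality into a Euclidean half-space model with a controlled perturbation, and then establish an upper density bound for the perimeter of the trace set within $\p\Om$ by the same cut-and-fill scheme, now using geodesic balls of $\p\Om$. The role of $\vert\beta\vert<1$ is essential precisely here: it forces a quantitative transversality between $M$ and $\p\Om$ and prevents the interface from becoming tangent to the wall, so that the contact line cannot collapse and genuinely inherits an $(n-2)$-dimensional upper density bound.

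I expect this last step to be the main obstacle, since it couples the interior perimeter, the wetted area on $\p\Om$, and the curvature of $\p\Om$ simultaneously, and it is exactly where the full strength of the De Philippis--Maggi boundary theory is needed. For this reason I would, in the end, invoke \cite[Lemma 2.8 and Lemma 2.10]{DePM15} to supply the contact-line density estimate rather than reprove it here.
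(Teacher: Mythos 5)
The paper does not actually prove this statement: it is quoted verbatim from \cite[Lemmas 2.8 and 2.10]{DePM15}, and the only ``proof'' supplied is that citation together with the observation (in the remark after \cref{regularity-thm}, via \cite[Example 21.3]{Mag12}) that a volume-constrained local minimizer is a $(\La,r_0)$-minimizer of the unconstrained energy. Your proposal therefore does strictly more than the paper: the interior estimate via the competitor $F=E\setminus B_r(x)$ is the standard upper-density comparison for $(\La,r_0)$-perimeter minimizers and is correct as written, and your extension up to $\p\Om$ (absorbing the wetted-area change through $\vert\beta\vert<1$ and $\mcH^{n-1}(\p\Om\cap B_r(x))\leq Cr^{n-1}$) is exactly the mechanism behind the cited Lemma 2.8. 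One small point to tidy: your comparison bounds $P(E;B_r(x))$, whereas \eqref{ineq-vg-M} is a bound on $\mcH^{n-1}(M\cap B_r(x))$ with $M=\overline{\p E\cap\Om}$; these agree only after invoking the density estimates (upper \emph{and} lower) to identify $\mcH^{n-1}\llcorner M$ with the perimeter measure up to a null set, which is part of the same almost-minimizer package. For the contact-line estimate \eqref{ineq-vg-Gamma} you correctly identify it as the genuinely hard step and, in the end, defer to the same De Philippis--Maggi lemma the paper cites; so for that part your route and the paper's coincide. In short: no gap, your argument is a legitimate (partial) reproof of a result the paper simply imports.
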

%--------------	
\section{Cut-off functions}\label{Sec3}
 
In this section, we first construct cut-off functions near the singularities, under the assumption of Euclidean volume growth. The technique is standard and these cut-off functions are very useful for the study of surfaces with singularities, see e.g., \cite{SS81,Ilmanen96,Wickramasekera14,DePM17,Zhu18}.
\begin{lemma}[cut-off functions]\label{Lem-Cut-off}
%    Let $\Om\subset\mbR^n$ be an open set with $C^2$ boundary $\p\Om$,
Let $E\subset\Om$ be a set of finite perimeter. 
Assume that $M=\overline{\p E\cap\Om}$ and $\Gamma=M\cap\p\Omega$ satisfy the local Euclidean volume growth condition \eqref{ineq-vg-M} and \eqref{ineq-vg-Gamma} respectively. Assume in addition that $\mcH^{n-q-1}({\rm sing}M)=0$ for some $q>0$. 
 Then for any small $\epsilon>0$, there exist open sets $S_\epsilon'\subset S_\epsilon\subset\mbR^n$, with ${\rm sing}M\subset S_\epsilon'$ and $S_\epsilon\subset\{x:{\rm dist}(x,{\rm sing}M)<\epsilon\}$, and  a smooth cut-off function $\varphi_\epsilon\in C^\infty(\mbR^n)$ such that $0\leq\varphi_\epsilon\leq1$ with
		\begin{align}\label{varphi-construction}
			\varphi_\epsilon(x)=
			\begin{cases}
				0\quad &x\in S_\epsilon',\\
				1\quad &x\in\mbR^n\setminus S_\epsilon.
			\end{cases}
		\end{align}
  Moreover, $\varphi_\epsilon$ satisfies the following properties:
  \begin{align}\label{pw-conv}
\varphi_\ep(x)\to 1 \hbox{ pointwisely for }x\in {\rm reg}M,
\end{align}
	
\begin{align}
\int_M\vert\nabla^M\varphi_\epsilon(x)\vert^q \rd\mcH^{n-1}(x)\leq C\ep,\label{esti-cutoff-1}
\end{align}
	
%\begin{align}\label{esti-cutoff-2}
%\int_{\Gamma\cap\left(S_\epsilon\setminus S'_\epsilon\right)}\varphi_\ep(x) \rd\mcH^{n-2}(x)
%\leq C\ep, \hbox{ for }q\ge 1,
%\end{align}

\begin{align}\label{esti-cutoff-3}
\int_\Gamma\vert\nabla^M\varphi_\epsilon(x)\vert^{q-1} \rd\mcH^{n-1}(x)
\leq C\ep, \hbox{ for }q>1,
\end{align}
%\begin{align}
    %\int_M\varphi_\ep(x)\vert\De_M\varphi_\ep(x)\vert\rd\mcH^{n-1}(x)
    %\leq C\ep, \hbox{ for }q\ge 2.
%\end{align}
Here and in all follows, $C$ will be referred to as positive constants that are independent of $\ep$.
\end{lemma}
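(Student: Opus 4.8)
The plan is to construct $\varphi_\epsilon$ from a covering of ${\rm sing}M$ by balls whose scale is tuned so that the Euclidean volume growth of $M$ (resp.\ $\Gamma$) exactly absorbs the blow-up of the gradient. The whole argument rests on one exponent-matching identity, and everything else is the standard machinery for cutting out a set of vanishing Hausdorff measure (cf.\ \cite{SS81,Ilmanen96,Wickramasekera14}). First I would use the hypothesis $\mcH^{n-q-1}({\rm sing}M)=0$ to extract, for each $\epsilon>0$, a countable family of balls $\{B_{r_i}(x_i)\}_i$ with centers $x_i\in{\rm sing}M$, radii $r_i<\epsilon/2$, covering ${\rm sing}M$, and satisfying $\sum_i r_i^{\,n-q-1}<\epsilon$; such a cover exists because the Hausdorff measure of a closed set may be computed, up to a dimensional constant, by coverings with balls centered on the set.

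Next I would fix a smooth profile $\chi\colon[0,\infty)\to[0,1]$ with $\chi\equiv0$ on $[0,1]$, $\chi\equiv1$ on $[2,\infty)$, and $|\chi'|\le2$, and set $\psi_i(x)=\chi(|x-x_i|/r_i)$. Each $\psi_i$ is smooth, vanishes on $B_{r_i}(x_i)$, equals $1$ outside $B_{2r_i}(x_i)$, and obeys $|\nabla\psi_i|\le 2/r_i$ with support in the annulus $B_{2r_i}(x_i)\setminus B_{r_i}(x_i)$. Taking $\varphi_\epsilon=\inf_i\psi_i$ and mollifying at a scale much smaller than the local minimum of the $r_i$ produces the required $\varphi_\epsilon\in C^\infty(\mbR^n)$ with $0\le\varphi_\epsilon\le1$. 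Setting $S_\epsilon'=\bigcup_i B_{r_i}(x_i)$ and $S_\epsilon=\bigcup_i B_{2r_i}(x_i)$ gives ${\rm sing}M\subset S_\epsilon'$, the two boundary values in \eqref{varphi-construction}, and the inclusion $S_\epsilon\subset\{x:{\rm dist}(x,{\rm sing}M)<\epsilon\}$ since $r_i<\epsilon/2$ and the centers lie on ${\rm sing}M$. Property \eqref{pw-conv} is then immediate: for $x\in{\rm reg}M$ one has $d:={\rm dist}(x,{\rm sing}M)>0$, and for every $\epsilon<d$ the inclusion above forces $x\notin S_\epsilon$, whence $\varphi_\epsilon(x)=1$.

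The heart of the matter is the gradient estimate. Since $\nabla\varphi_\epsilon$ is supported in $\bigcup_i\big(B_{2r_i}(x_i)\setminus B_{r_i}(x_i)\big)$ with $|\nabla\varphi_\epsilon|\le 2/r_i$ on the $i$-th annulus, and $|\nabla^M\varphi_\epsilon|\le|\nabla\varphi_\epsilon|$, the volume-growth bound \eqref{ineq-vg-M} yields
\begin{align*}
\int_M|\nabla^M\varphi_\epsilon|^q\,\rd\mcH^{n-1}
\le\sum_i\Big(\tfrac{2}{r_i}\Big)^q\mcH^{n-1}\big(M\cap B_{2r_i}(x_i)\big)
\le C\sum_i r_i^{-q}\,r_i^{\,n-1}
=C\sum_i r_i^{\,n-q-1}\le C\epsilon,
\end{align*}
which is \eqref{esti-cutoff-1}; the exponents collapse precisely because $(n-1)-q=n-q-1$. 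For \eqref{esti-cutoff-3} I would run the same computation against \eqref{ineq-vg-Gamma}, first recentering each ball that meets $\Gamma$ at a point of $\Gamma$ at the cost of doubling its radius, so that
\begin{align*}
\int_\Gamma|\nabla^M\varphi_\epsilon|^{q-1}\,\rd\mcH^{n-2}
\le\sum_i\Big(\tfrac{2}{r_i}\Big)^{q-1}\mcH^{n-2}\big(\Gamma\cap B_{Cr_i}\big)
\le C\sum_i r_i^{-(q-1)}\,r_i^{\,n-2}
=C\sum_i r_i^{\,n-q-1}\le C\epsilon,
\end{align*}
where again $(n-2)-(q-1)=n-q-1$.

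The main obstacle is exactly this scale-matching: one must couple the annulus width to $r_i$ so that, after integrating the $q$-th (resp.\ $(q-1)$-th) power of a gradient of size $r_i^{-1}$ over a set of size $r_i^{n-1}$ (resp.\ $r_i^{n-2}$), the residual power is the critical exponent $n-q-1$ at which $\mcH^{n-q-1}({\rm sing}M)=0$ furnishes a cover with $\sum_i r_i^{\,n-q-1}$ arbitrarily small. The remaining points are routine: one must smooth the infimum $\inf_i\psi_i$ without degrading the pointwise gradient bound, and, when ${\rm sing}M$ is not compact, organize the possibly countable cover to be locally finite so that $\varphi_\epsilon$ is smooth and the sums above converge; both are handled by a standard exhaustion of $M$ together with mollification at a sufficiently fine scale.
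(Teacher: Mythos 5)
Your proposal is correct and follows essentially the same route as the paper: cover ${\rm sing}M$ by balls with $\sum_i r_i^{\,n-q-1}<\epsilon$, build annular cutoffs with $|\nabla\psi_i|\lesssim 1/r_i$, take the infimum and mollify, recenter on $\Gamma$ for the boundary estimate, and let the Euclidean volume growth absorb the gradient blow-up via the exponent identity $(n-1)-q=n-q-1$. The only cosmetic difference is that the paper uses compactness of ${\rm sing}M$ (relatively closed and bounded) to get a finite cover outright, which sidesteps your local-finiteness discussion for countable covers.
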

\begin{proof}
    We begin by noticing that ${\rm sing} M$ is compact since it is relatively closed and bounded.
			
	For any $\epsilon>0$, since $\mcH^{n-q-1}({\rm sing}M)=0$, we may cover the singular set ${\rm sing}M$ with finitely many balls $\mcG:=\{B_{r_i}(z_i)\}_{i=1}^{N_1}$ where $z_i\in M$, $\sum_{i=1}^{N_1} r_i^{n-q-1}<\epsilon$,
	and we may assume without loss of generality that $r_i<1$ for each $i$ and that $6r_i<\min\{R_1,R_2\}$, where $R_1,R_2$ are given in \cref{Defn-LocalVolumeGrowth}, within which the Euclidean volume growth conditions \eqref{ineq-vg-M} and \eqref{ineq-vg-Gamma} are valid for $z_i$.
 In particular, for those $B_{2r_i}(z_i)\cap\Gamma\neq\emptyset$, we may assume that $z_i\in\Gamma$, otherwise we may choose $\tilde z_i\in B_{2r_i}(z_i)\cap\Gamma$ and use $B_{3r_i}(\tilde z_i)$ to replace $B_{r_i}(z_i)$, since it follows directly that $B_{r_i}(z_i)\subset B_{3r_i}(\tilde z_i)$.
	Moreover, we set $C_M$ to be the smallest numbers among $C_1$ and $C_2$ given in \cref{Defn-LocalVolumeGrowth}, notice that $C_M$ is a universal constant that is independent of the choice of $z_i$ and $\ep$.

			For each $i$, let $\varphi_i\in C^\infty(\mbR^n)$ satisfy $0\leq\varphi_i\leq1$ with
			\begin{align*}
				\varphi_i(x)=\begin{cases}
					0\quad &\forall x\in B_{r_i}(z_i),\\
					1\quad &\forall x\in \mbR^n\setminus B_{2r_{i}}(z_i),
				\end{cases}
			\end{align*}
			and
   $$\vert \nabla\varphi_i(x)\vert\leq\frac{2}{r_i},
   %\quad \vert\nabla^2 \varphi_i(x)\vert\leq\frac{C}{r_i^2},
   \quad \hbox{ for all }x\in\mbR^n.$$
		Define $\tilde\varphi_\epsilon$ by
			\begin{align*}
	\tilde{\varphi}_\epsilon(x):=\min_i \varphi_i(x).
			\end{align*}
			It follows that $\tilde\varphi_\epsilon$ is piecewise-smooth with $0\leq\tilde{\varphi}_\epsilon\leq1$, and
			\begin{align}\label{varphitilde}
				\tilde{\varphi}_\epsilon(x)=
				\begin{cases}
					0 \quad \text{on}\quad  \bigcup_i B_{r_i}(z_i)\supseteq {\rm sing}M,\\
					1 \quad \text{on}\quad \mbR^n\setminus \bigcup_iB_{2r_i}(z_i).
				\end{cases}
			\end{align}
   It is clear that \eqref{varphi-construction} holds and hence \eqref{pw-conv} is true.
   
			By the Euclidean volume growth condition \eqref{ineq-vg-M} of $M$ and that $\sum_{i=1}^{N_1} r_i^{n-q-1}<\epsilon$, we have
			\begin{align}\label{ineq-cutoff-1}
				\int_M\vert\nabla^M\tilde{\varphi}_\epsilon(x)\vert^q\rd\mcH^{n-1}(x)
				%&\leq\int_M\vert \nabla\tilde{\varphi}_\epsilon(x)\vert^2\rd\mcH^{n-1}(x)\notag
				%\leq \sum_{i}\int_M\vert\nabla\varphi_i(x)\vert^2\rd\mcH^{n-1}(x)\notag\\
				&\leq\sum_{i}\int_{M\cap\left(B_{2r_i}(z_i)\setminus B_{r_i}(z_i)\right)}\vert\nabla\varphi_i(x)\vert^q\rd\mcH^{n-1}(x)\notag\\
				&\leq \sum_i \frac{2^q}{r_i^q}\mcH^{n-1}\left(M\cap B_{2r_i}(z_i)\right)\notag\\
				&\leq 2^{n+q-1}C_M\sum_i^{N_1} r_i^{n-q-1}\leq2^{n+q-1}C_M\epsilon.
			\end{align}
%		Similarly, for $q\geq1$, using the Euclidean volume growth condition \eqref{ineq-vg-Gamma} of $\Gamma$, we find: for those $z_i\in\Gamma$,
		%\begin{align}\label{ineq-cutoff-2}
		%\int_{\Gamma\cap\left(B_{2r_i}(z_i)\setminus B_{r_i}(z_i)\right)}\tilde{\varphi}_\ep(x)\rd\mcH^{n-2}
		%	&\leq \mcH^{n-2}\left(\Gamma\cap B_{2r_i}(z_i)\right)\leq 2^{n-2}C_Mr_i^{n-2}\notag\\
	%		&\leq 2^{n-2}C_M\sum_i^{N_1} r_i^{n-2}\leq2^{n-2}C_M\epsilon,
	%	\end{align}
	%		here we used the fact that $\sum_ir_i^{n-2}\leq\sum_ir_i^{n-q-1}$, since $q\geq1$ and we have assumed that $r_i<1$ for each $i$.
		For $q>1$,
		\begin{align}\label{ineq-cutoff-3}
			    \int_\Gamma\vert\nabla\tilde{\varphi}_\ep(x)\vert^{q-1}\rd\mcH^{n-2}(x)
			    \leq&\sum_i\frac{2^{q-1}}{r_i^{q-1}}\mcH^{n-2}(\Gamma\cap B_{2r_i}(z_i))\notag\\
			    \leq&2^{n+q-3}C_M\sum_i^{N_1}r_i^{n-q-1}
			    \leq2^{n+q-3}C_M\ep.
			\end{align}
%-----------q>2 and Laplace varphi------		
  %and for $q\geq2$,
		%\begin{align}\label{ineq-cutoff-4}
		    %\int_M\tilde\varphi_\ep(x)\vert\De_M\tilde\varphi_\ep(x)\vert\rd\mcH^{n-1}(x)
		    %\leq&\sum_i\int_{M\cap\left(B_{2r_i}(z_i)\setminus B_{r_i}(z_i)\right)}(n-1)\vert\nabla^2\varphi_i(x)\vert\rd\mcH^{n-1}(x)\notag\\
		    %\leq&C\sum_i^{N_1}r_i^{n-3}\leq C\ep.
		%\end{align}
%-------------------		
		%Finally, we conclude the proof by letting $S_\epsilon',S_\epsilon$ denote the sets such that
			%\begin{align*}
			%	\varphi_\epsilon(x)=
			%	\begin{cases}
			%		0\quad &x\in S_\epsilon',\\
			%		1\quad &x\in\mbR^n\setminus S_\epsilon.
			%	\end{cases}
			%\end{align*}
  
  We mollify $\tilde{\varphi}_\epsilon$ to obtain a smooth function $\varphi_\epsilon$, which still satisfies estimates of the form \eqref{ineq-cutoff-1} %\eqref{ineq-cutoff-2}
  and \eqref{ineq-cutoff-3}. Since $\tilde{\varphi}_\epsilon$ satisfies \eqref{varphitilde}, we may let $S_\epsilon',S_\epsilon$ denote the sets such that
			\begin{align*}
				\varphi_\epsilon(x)=
				\begin{cases}
					0\quad &x\in S_\epsilon',\\
					1\quad &x\in\mbR^n\setminus S_\epsilon.
				\end{cases}
			\end{align*}
			We see that $\varphi_\epsilon$ is the desired smooth cut-off function, and this completes the proof.
\end{proof}

Using these cut-off functions,
we can prove the following tangential divergence theorem on hypersurfaces with singularities that are of low Hausdorff dimension and satisfying the Euclidean volume growth condition.
\begin{lemma}\label{Lem-divergenctheorem}
    %Let $\Om\subset\mfR^n$ be an open set with $C^2$ boundary $\p\Om$ and 
    Let $E\subset\Om$ be a set of finite perimeter. Assume that $M=\overline{\p E\cap\Om}$ and $\Gamma=M\cap\p\Omega$ satisfy the local Euclidean volume growth condition \eqref{ineq-vg-M} and \eqref{ineq-vg-Gamma} respectively. Assume in addition that  $\mcH^{n-2}({\rm sing}M)=0$
    and $H\in L^1(M).$  
    Then for any $X\in C^1(\mfR^n;\mfR^n)$, there holds
    \begin{align}\label{formu-div-M}
        \int_M{\rm div}_MX\rd\mcH^{n-1}=\int_M H\left<X,\nu\right>\rd\mcH^{n-1}+\int_\Gamma\left<X,\mu\right>\rd\mcH^{n-2},
    \end{align}
    and for any $X\in C^1(\mfR^n;\mfR^n)$ such that $X(x)\in T_x\p\Om$ on $\p\Om$,
    \begin{align}\label{formu-div-B+}
        \int_{B^+}{\rm div}_{\p\Om}X\rd\mcH^{n-1}=\int_\Gamma\left<X,\bar\mu\right>\rd\mcH^{n-2}.
    \end{align}
    %Here $H$ denotes the mean curvature of ${\rm reg}M$.
\end{lemma}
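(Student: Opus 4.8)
The plan is to reduce both identities to the classical tangential divergence theorem on the smooth manifold $\mathrm{reg}M$ (respectively on the smooth region $\mathrm{reg}B^+\subset\p\Om$), the point being to excise the singular set $\mathrm{sing}M$ using the cut-off functions $\varphi_\epsilon$ of \cref{Lem-Cut-off} and then to pass to the limit $\epsilon\to0$. Since the standing hypothesis here is $\mcH^{n-2}(\mathrm{sing}M)=0$, I would invoke \cref{Lem-Cut-off} with exponent $q=1$ (so that $n-q-1=n-2$); the relevant gradient estimate is then \eqref{esti-cutoff-1}, namely $\int_M|\nabla^M\varphi_\epsilon|\,\rd\mcH^{n-1}\le C\epsilon$, together with the pointwise convergence \eqref{pw-conv}.

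For \eqref{formu-div-M}, I first note that $\varphi_\epsilon X\in C^1(\mfR^n;\mfR^n)$ vanishes on the open set $S_\epsilon'\supset\mathrm{sing}M$, so it is supported inside $\mathrm{reg}M$, where the first variation (tangential divergence) formula applies and gives $\int_M\mathrm{div}_M(\varphi_\epsilon X)\,\rd\mcH^{n-1}=\int_M H\langle\varphi_\epsilon X,\nu\rangle\,\rd\mcH^{n-1}+\int_\Gamma\langle\varphi_\epsilon X,\mu\rangle\,\rd\mcH^{n-2}$, where integrals over $M,\Gamma$ coincide with those over $\mathrm{reg}M,\mathrm{reg}\Gamma$ by \cref{remark1}. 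Expanding $\mathrm{div}_M(\varphi_\epsilon X)=\varphi_\epsilon\,\mathrm{div}_M X+\langle\nabla^M\varphi_\epsilon,X\rangle$ and letting $\epsilon\to0$, the error term is bounded by $\|X\|_\infty\int_M|\nabla^M\varphi_\epsilon|\,\rd\mcH^{n-1}\le C\epsilon\to0$ thanks to \eqref{esti-cutoff-1}. The three remaining terms converge by dominated convergence and \eqref{pw-conv}: the term with $\mathrm{div}_M X$ since this is bounded and $\mcH^{n-1}(M)<\infty$, the term with $H\langle X,\nu\rangle$ because $H\in L^1(M)$ by hypothesis, and the boundary term because $\varphi_\epsilon\to1$ holds $\mcH^{n-2}$-a.e.\ on $\Gamma$ (as $\mcH^{n-2}(\mathrm{sing}M)=0$). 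This yields \eqref{formu-div-M}.

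For \eqref{formu-div-B+} I argue analogously, now on the $C^2$ region $B^+$, which is full-dimensional inside the $(n-1)$-manifold $\p\Om$. Because $X$ is everywhere tangent to $\p\Om$, the divergence theorem on $B^+$ carries no curvature contribution and reads $\int_{B^+}\mathrm{div}_{\p\Om}(\varphi_\epsilon X)\,\rd\mcH^{n-1}=\int_\Gamma\varphi_\epsilon\langle X,\bar\mu\rangle\,\rd\mcH^{n-2}$, with $\bar\mu$ the outward unit conormal of $\Gamma=\p_{\p\Om}B^+$ in $B^+$. Expanding and passing to the limit as above, the only genuinely new point is to control $\int_{B^+}\langle\nabla^{\p\Om}\varphi_\epsilon,X\rangle\,\rd\mcH^{n-1}$. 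Using $|\nabla\varphi_\epsilon|\le 2/r_i$ on the annuli $B_{2r_i}(z_i)\setminus B_{r_i}(z_i)$ that cover $\mathrm{sing}M$, this is at most $C\sum_i r_i^{-1}\,\mcH^{n-1}(\p\Om\cap B_{2r_i}(z_i))$.

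The main point to verify, and the only step outside the three displayed estimates of \cref{Lem-Cut-off}, is precisely this last bound: the gradient integral is taken over $B^+$, which is codimension zero in $\p\Om$, rather than over $M$ or $\Gamma$. I would resolve it by the codimension-zero volume growth $\mcH^{n-1}(\p\Om\cap B_r(z))\le Cr^{n-1}$, which is automatic from the $C^{2,\alpha}$-regularity of $\p\Om$ (and requires no minimality). Inserting it gives $\int_{B^+}|\nabla^{\p\Om}\varphi_\epsilon|\,\rd\mcH^{n-1}\le C\sum_i r_i^{n-2}<C\epsilon$, matching the $q=1$ budget $\sum_i r_i^{n-q-1}<\epsilon$ built into $\varphi_\epsilon$; the remaining terms again converge by dominated convergence, establishing \eqref{formu-div-B+}. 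Beyond this auxiliary estimate the argument is routine, the conceptual content being entirely concentrated in the cut-off construction of \cref{Lem-Cut-off}.
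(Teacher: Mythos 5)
Your proposal is correct and follows essentially the same route as the paper: cut off near $\mathrm{sing}M$ with the $q=1$ functions of \cref{Lem-Cut-off}, apply the classical tangential divergence theorem to $\varphi_\epsilon X$ on the regular part, and pass to the limit using \eqref{pw-conv}, \eqref{esti-cutoff-1}, $H\in L^1(M)$ and dominated convergence. Your explicit treatment of the gradient error over $B^+$ via the codimension-zero growth $\mcH^{n-1}(\p\Om\cap B_r(z))\le Cr^{n-1}$ is a detail the paper leaves implicit when it says to ``follow the proof of \eqref{formu-div-M}'', and it is handled correctly.
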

\begin{proof}
    Notice that this is the case when $q=1$ in \cref{Lem-Cut-off}.
    For any small $\ep>0$, we have $\varphi_\ep,S'_\ep$ and $S_\ep$ from \cref{Lem-Cut-off}.
    Let $X_\ep$ be a vector field given by
    \begin{align*}
        X_\ep:=\varphi_\ep X,
    \end{align*}
   % since $\varphi_\ep\equiv0$ on $S'_\ep\supset{\rm sing}M$, we know that $X_\ep$ is well-defined and $X_\ep\in C^1(\mfR^n;\mfR^n)$.
    We readily see that $X_\ep\in C^1(\mfR^n;\mfR^n)$ and
    \begin{align*}
        X_\ep
        =\begin{cases}
				0\quad&\text{on }S^{'}_\ep,\\
				\varphi_\ep X\quad&\text{on }S_\ep\setminus S^{'}_\ep,\\
				X\quad&\text{on }M\setminus S_\ep.
			\end{cases}
    \end{align*}
  %  Notice that on ${\rm reg}M$,
  %  if we denote by $X^T:=X-\left<X,\nu\right>\nu$ the tangential part of $X$ with respect to $M$ and $X^\perp$ the normal part to $M$, then
%\begin{align*}
 %   {\rm div}_MX
%    =&{\rm div}_M(X^T+X^\perp)
%    ={\rm div}_MX^T+H\left<X,\nu\right>.
    %\\=&{\rm div}X-\left<\nabla_N X,N\right>.
%\end{align*}
Integrating ${\rm div}_M(X_\ep)$ on $M\setminus S'_\ep$, we can apply the classical tangential divergence theorem to find
\begin{align*}
    \int_{M}{\rm div}_M X_\ep\rd\mcH^{n-1}
    =\int_{M}H\left<X_\ep,\nu\right>\rd\mcH^{n-1}+\int_{\Gamma}\left<X_\ep,\mu\right>\rd\mcH^{n-2}.%+\int_{\p S'_\ep\cap M}0\rd\mcH^{n-2},
\end{align*}
A further computation then yields that
\begin{align*}
   \int_M \varphi_\epsilon \div_MX d\mcH^{n-1}+\int_{M}\left<\nabla^M\varphi_\ep,X\right>d\mcH^{n-1}
   =\int_M \varphi_\ep H\left<X,\nu\right>\rd\mcH^{n-1}+\int_\Gamma\varphi_\ep\left<X,\mu\right>\rd\mcH^{n-2}.
\end{align*}

%\begin{align*}
 %   \int_{M\cap\left(S_\ep\setminus S'_\ep\right)}\left<\nabla^M\varphi_\ep,X\right>+\varphi_\ep\left({\rm div}X-\left<\nabla_\nu X,\nu\right>\right)\rd\mcH^{n-1}+\int_{M\setminus S_\ep}{\rm div}_{M}X\rd\mcH^{n-1}\\
 %   =\int_{M\cap\left(S_\ep\setminus S'_\ep\right)}\varphi_\ep H\left<X,\nu\right>\rd\mcH^{n-1}+\int_{M\setminus S_\ep}H\left<X,\nu\right>\rd\mcH^{n-1}\\
 %   +\int_{\Gamma\cap\left(S_\ep\setminus S'_\ep\right)}\varphi_\ep\left<X,\mu\right>\rd\mcH^{n-2}+\int_{\Gamma\setminus S_\ep}\left<X,\mu\right>\rd\mcH^{n-2}.
%\end{align*}
Since $X\in C^1(\mfR^n;\mfR^n)$ and $M$ is bounded, we have that
${\rm div}_MX=({\rm div}X-\left<\nabla_\nu X,\nu\right>)$ and $\vert X\vert$ are bounded (the upper bounds are independent of $\ep$).
By virtue of \eqref{pw-conv} and \eqref{esti-cutoff-1} in \cref{Lem-Cut-off}, and the assumption that $H\in L^1(M)$, we may send $\ep\searrow0$ and use the dominated convergence theorem to conclude \eqref{formu-div-M}.

On the other hand, since $B^+\subset\p\Om$ is $C^2$ and the singularities of ${\rm cl}_{\p\Om}B^+$ are on $\Gamma$, thus we can follow the proof of \eqref{formu-div-M} to conclude \eqref{formu-div-B+}. This completes the proof. 
\end{proof}
    Next we establish a useful tool for the study of hypersurface with boundary in differential geometry, which is well-known and widely used in the smooth setting (see for example \cite{AS16,LX17,Souam21}).
	 Thanks to the cut-off functions, we can extend this classical result to the singular setting.
	 
	\begin{lemma}\label{PropAS16-2.4}
		Let $E\subset\Om$ be a set of finite perimeter. 
		Assume that $M$ and $\Gamma$ satisfy the local Euclidean volume growth condition \eqref{ineq-vg-M} and \eqref{ineq-vg-Gamma} respectively. Assume in addition that  $\mcH^{n-2}({\rm sing}M)=0$. Then there holds
		\begin{align}\label{AS16-(2.3)}
			(n-1)\int_{M}\nu\rd\mcH^{n-1}=\int_{\Gamma}\left\{\left<x,\mu\right>\nu-\left<x,\nu\right>\mu\right\}\rd\mcH^{n-2}.
		\end{align}
	\end{lemma}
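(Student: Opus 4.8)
The plan is to reduce the vector-valued identity \eqref{AS16-(2.3)} to a one-parameter family of scalar identities, obtained by pairing both sides with an arbitrary constant vector $a\in\mfR^n$, and to prove each of these by applying a tangential divergence theorem to a suitably chosen \emph{tangential} vector field on $M$. Fixing $a$, I would set
\begin{align*}
Z:=\left<\nu,a\right>x-\left<x,\nu\right>a,
\end{align*}
which is defined on ${\rm reg}M$ (where $\nu$ is $C^1$, since ${\rm reg}M$ is $C^2$). A direct check gives $\left<Z,\nu\right>=\left<\nu,a\right>\left<x,\nu\right>-\left<x,\nu\right>\left<a,\nu\right>=0$, so $Z$ is everywhere tangent to $M$; moreover on $\Gamma$ its conormal component is precisely $\left<Z,\mu\right>=\left<x,\mu\right>\left<\nu,a\right>-\left<x,\nu\right>\left<a,\mu\right>$, i.e.\ the $a$-component of the boundary integrand in \eqref{AS16-(2.3)}.

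The first computation is to verify, pointwise on ${\rm reg}M$, that ${\rm div}_MZ=(n-1)\left<\nu,a\right>$. Expanding by the product rule and using the elementary tangential identities ${\rm div}_Mx=n-1$ and ${\rm div}_Ma=0$, what remains is $\left<\nabla^M\left<\nu,a\right>,x\right>-\left<\nabla^M\left<x,\nu\right>,a\right>$. Since both $\nabla^M\left<\nu,a\right>$ and $\nabla^M\left<x,\nu\right>$ are given by the (self-adjoint) Weingarten map applied to the tangential projections of $a$ and $x$ respectively, these two terms are equal and cancel, leaving exactly $(n-1)\left<\nu,a\right>$. Notice that no mean-curvature term survives this computation, and — because $Z$ is tangential — the term $\int_M H\left<Z,\nu\right>$ appearing in the divergence theorem vanishes identically; this is exactly why the statement needs no integrability assumption on $H$.

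The remaining, and technically most delicate, step is to legitimize the divergence theorem across ${\rm sing}M$. Here the obstacle is that $Z$ involves $\nu$ and is therefore only defined and $C^1$ on ${\rm reg}M$, so \cref{Lem-divergenctheorem} cannot be quoted directly (it is stated for globally defined $X\in C^1(\mfR^n;\mfR^n)$); instead I would reproduce its cut-off argument for the tangential field $Z$. Taking the functions $\varphi_\ep$ of \cref{Lem-Cut-off} with $q=1$ (which is admissible precisely because $\mcH^{n-2}({\rm sing}M)=0$), the classical tangential divergence theorem applied to $\varphi_\ep Z$ on ${\rm reg}M$ gives
\begin{align*}
(n-1)\int_M\varphi_\ep\left<\nu,a\right>\rd\mcH^{n-1}+\int_M\left<\nabla^M\varphi_\ep,Z\right>\rd\mcH^{n-1}=\int_\Gamma\varphi_\ep\left<Z,\mu\right>\rd\mcH^{n-2},
\end{align*}
the $H$-term being absent because $\left<Z,\nu\right>=0$. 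As $M$ is bounded, $Z$ is bounded, so the error term is dominated by $\sup_M\vert Z\vert\int_M\vert\nabla^M\varphi_\ep\vert\rd\mcH^{n-1}\leq C\ep$ by \eqref{esti-cutoff-1}. Using \eqref{pw-conv} (so that $\varphi_\ep\to1$ on ${\rm reg}M$, hence $\mcH^{n-2}$-a.e.\ on $\Gamma$ since $\mcH^{n-2}({\rm sing}M)=0$), the finiteness of $\mcH^{n-1}(M)$ and $\mcH^{n-2}(\Gamma)$, and dominated convergence, I would let $\ep\searrow0$ to obtain the scalar identity $(n-1)\int_M\left<\nu,a\right>\rd\mcH^{n-1}=\int_\Gamma\{\left<x,\mu\right>\left<\nu,a\right>-\left<x,\nu\right>\left<a,\mu\right>\}\rd\mcH^{n-2}$. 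Since $a$ is arbitrary, \eqref{AS16-(2.3)} follows.
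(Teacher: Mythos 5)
Your proposal is correct and follows essentially the same route as the paper: your field $Z=\left<\nu,a\right>x-\left<x,\nu\right>a$ coincides pointwise with the paper's $Y=\left<\vec a,\nu\right>x^T-\left<x,\nu\right>\vec a^T$ (their normal components cancel), and both proofs multiply by the cut-off $\varphi_\ep$ of \cref{Lem-Cut-off} with $q=1$, apply the classical tangential divergence theorem away from ${\rm sing}M$, control the error via \eqref{esti-cutoff-1}, and pass to the limit by dominated convergence. The only cosmetic difference is that you compute ${\rm div}_MZ$ directly from ${\rm div}_Mx=n-1$ and the symmetry of the shape operator, whereas the paper routes through ${\rm div}_M(x^T)$ and ${\rm div}_M(\vec a^T)$ with mean-curvature terms that cancel.
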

	\begin{proof}

		Let $\vec{a}$ be any constant vector field in $\mbR^{n}$, and consider the following {vector field} on $M$,
		\begin{align}\label{defn-vectorfield-Y}
			Y=\left<\vec{a},\nu\right>x^T-\left<x,\nu\right>\vec{a}^T,
		\end{align}
		which is a well-defined $C^2$-vector field on ${\rm reg}M$, here $x^T=x-\left<x,\nu\right>\nu$ is the orthogonal projection of $x$ onto $T_xM$, $\vec{a}^T$ is understood similarly. Notice also that $\vert Y\vert$ is bounded on ${\rm reg}M$ by some constant $C$ since $\vec{a}$ is a constant vector field and $M$ is bounded.
		
		For $\ep>0$, we have $\varphi_\ep, S^{'}_\ep, S_\ep$ from \cref{Lem-Cut-off},
		let $\tilde{Y}_\ep:\mbR^{n}\ra\mbR^{n}$ be a $C^2$ vector field satisfying 
		\begin{align*}
			\vert\tilde{Y}_\ep\vert\leq C\text{ in a neighborhood of }M\setminus S'_\ep,\qquad\tilde{Y}_\ep=Y\text{ on }M\setminus S^{'}_{\ep}. 
		\end{align*}
		Then let $Y_\ep\in C^2(\mbR^{n};\mbR^{n})$ be a vector field satisfying
			$Y_\ep=\varphi_\ep \tilde{Y}_\ep,$
		and we readily see that
		\begin{align*}
			Y_\ep=\begin{cases}
				0\quad&\text{on }S^{'}_\ep,\\
				\varphi_\ep \tilde{Y}_\ep\quad&\text{on }S_\ep\setminus S^{'}_\ep,\\
				Y\quad&\text{on }M\setminus S_\ep.
			\end{cases}
		\end{align*}
		Notice that on $M\setminus S^{'}_\ep$, 
		\begin{align}
			&{\rm div}_M\left(x^T\right)=(n-1)-H\left<x,\nu\right>,\label{Prop2.2-eq1}\\
			&{\rm div}_M\left(\vec{a}^T\right)=-H\left<\vec a,\nu\right>\label{Prop2.2-eq2}.
		\end{align}
		Thus we have, on $M\setminus S^{'}_\ep$, there holds
		\begin{align*}
			&{\rm div}_M(Y_\ep)\\
			=&\varphi_\ep\left(\left<\vec{a},\nu\right>{\rm div}_M(x^T)+\left<\vec{a}^T,\nabla_{x^T}\nu\right>-\left<x,\nu\right>{\rm div}_M(\vec{a}^T)-\left<x^T,\nabla_{\vec{a}^T}\nu\right>\right)+\left<\nabla^M\varphi_\ep,\tilde{Y}_\ep\right>\\
			=&(n-1)\varphi_\ep\left<\vec{a},\nu\right>+\left<\nabla^M\varphi_\ep,\tilde{Y}_\ep\right>,
		\end{align*}
		where in the second equality, we have used \eqref{Prop2.2-eq1},\eqref{Prop2.2-eq2}, and the fact that $\left<\vec{a}^T,\nabla_{x^T}\nu\right>=\left<x^T,\nabla_{\vec{a}^T}N\right>=h(x^T,\vec{a}^T)$.
		
		Finally, integrating ${\rm div}_M(Y_\ep)$ on $M\setminus S'_\ep$ and using the classical divergence theorem, we have
		\begin{align}\label{smoothApproximation1}
			\int_{M}\left<\nabla^M\varphi_\ep, \tilde{Y}_\ep\right>\rd\mcH^{n-1}+\int_{M}(n-1)\varphi_\ep\left<\vec a,\nu\right>\rd\mcH^{n-1}=\int_{\Gamma}\varphi_\ep\left<Y,\mu\right>\rd\mcH^{n-2}.
		\end{align} 
		By virtue of \eqref{pw-conv} and \eqref{esti-cutoff-1} in \cref{Lem-Cut-off}, we may send $\ep\searrow0$ and use the dominated convergence theorem to get
		\begin{align*}
			(n-1)\int_M\left<\vec{a},\nu\right>\rd\mcH^{n-1}=\int_{\Gamma}\left\{\left<x,\mu\right>\left<\nu,\vec{a}\right>-\left<x,\nu\right>\left<\mu,\vec{a}\right>\right\}\rd\mcH^{n-2}.
		\end{align*}
		Since $\vec{a}$ is taken to be any constant vector field in $\mbR^{n}$, we conclude \eqref{AS16-(2.3)}.
	\end{proof}
	\begin{remark}
	\normalfont
	Note that here we use the approximation argument so that we can appeal to the classical divergence theorem.
	The reason that \cref{Lem-divergenctheorem} cannot be used  here is because the vector field $Y$ defined in \eqref{defn-vectorfield-Y} is not a globally-defined $C^2$-vector field, indeed, it is just defined on the regular part of $M$.
	\end{remark}

\section{Poincar\'e-type inequality for stable sets}\label{Sec-4}
In the spirit of Sternberg-Zumbrun \cite{SZ98}, we introduce the following admissible family of sets of finite perimeter for the study of fixed-volume variation. 

	\begin{definition}\label{admissiblesets}
		\normalfont
		For some $T>0$, a family of sets of finite perimeter in $\Om$, denoted by $\{E_t\}_{t\in(-T,T)}$, with each $E_t$ of finite perimeter and $E_0=E$, is called \textit{admissible}, if:
		\begin{enumerate}
			\item $\chi_{E_t}\ra\chi_E$ in $L^1(\Om)$ as $t\ra0$,
			\item $t\ra \mcF_\beta(E_t;\Om)$ is twice differentiable at $t=0$,
			\item $\vert E_t\vert=\vert E\vert$ for all $t\in(-T,T)$.
		\end{enumerate}
		%	Also, a one paramter family of $C^2$ diffeomorphisms $\{\Psi_t\}_{\vert t\vert<\epsilon}$ is called an admissible variation if for any $t\in(-\ep,\ep)$,
		%	\begin{enumerate}
			%		\item $\Psi_0(x)=x$,\quad $\forall x\in\mbR^n$. 
			%		\item $\Psi_t(\p\Om)\subset\p\Om$,
			%		\item $\vert\Psi_t(E)\vert=\vert E\vert$.
			%	\end{enumerate} 
	\end{definition}
	
	The stationary and stable sets in our settings are defined in the following sense.
	
	\begin{definition}\label{stationaryandstable}
		\normalfont
		For a set of finite perimeter $E\subset\Om\subset\mbR^n$ and for an admissible family of sets $\{E_t\}_{t \in (-T,T)}$, let $\mcF_\beta(t):=\mcF_\beta(E_t)$.
		$E$ is said to be \textit{stationary for the energy functional $\mcF_\beta$ under volume constraint} if $\mcF_\beta'(0)=0$ for all admissible families $\{E_t\}$.
		A stationary set $E$ is called \textit{stable} if $\mcF_\beta''(0)\geq0$ for all admissible families $\{E_t\}$.
	\end{definition}
	\begin{proposition}\label{Prop-capillaryCMC}
		Let $E\subset\Om$ be a set of finite perimeter, which is stationary for $\mcF_\beta$ under volume constraint. Assume that $M=\overline{\p E\cap\Om}$ and $\Gamma=M\cap\p\Omega$ satisfy the local Euclidean volume growth condition \eqref{ineq-vg-M} and \eqref{ineq-vg-Gamma} respectively. Assume in addition that  $\mcH^{n-2}({\rm sing}M)=0$
    and $H\in L^1(M).$ 
		Then $E$ satisfies
		\begin{enumerate}[label=\roman*., itemsep=0pt, topsep=0pt]
			\item (CMC) On ${\rm reg}M$, the mean curvature of $M$ is constant, denoted by $H$,
			
			\item (Young's law) On ${\rm reg}M\cap\p\Om$, the measure-theoretic hypersurface $M$ intersects $\p\Om$ with a constant contact angle $\theta$ ($\cos\theta=\beta$), i.e.,
			\begin{align}\label{condi-Young-W1}
				\left<\nu,\oN\right>=-\cos\theta=-\left<\mu,\onu\right>.
			\end{align} 
		\end{enumerate}
	%	Moreover, ${\rm reg}M\cap\Om$ is locally an analytic hypersurface with constant mean curvature, relatively open in $\p E\cap\Om$.
	\end{proposition}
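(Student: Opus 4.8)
The plan is to obtain both conditions as the Euler--Lagrange equations of $\mcF_\beta$, by computing the first variation along flows of vector fields tangent to $\p\Om$ and then localizing: first in the interior of $\Om$ to get the CMC condition, and then along $\Gamma$ to get Young's law. First I would fix a smooth vector field $X$ on $\mfR^n$ that is tangent to $\p\Om$ along $\p\Om$ (so $X(x)\in T_x\p\Om$), let $\Phi_t$ be its flow and set $E_t:=\Phi_t(E)$. Since $X$ is tangent to $\p\Om$, the flow preserves $\Om$, hence $E_t\subset\Om$, and after a standard volume correction $\{E_t\}$ is an admissible family in the sense of \cref{admissiblesets}. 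The first variation of the two perimeters is $\frac{\rd}{\rd t}\big|_{0}P(E_t;\Om)=\int_M\div_M X\,\rd\mcH^{n-1}$ and $\frac{\rd}{\rd t}\big|_{0}P(E_t;\p\Om)=\int_{B^+}\div_{\p\Om}X\,\rd\mcH^{n-1}$, where the singular set contributes nothing because $\mcH^{n-1}({\rm sing}M)=0$ (see \cref{remark1}). Applying the tangential divergence theorems \eqref{formu-div-M} and \eqref{formu-div-B+} of \cref{Lem-divergenctheorem} --- whose hypotheses $\mcH^{n-2}({\rm sing}M)=0$ and $H\in L^1(M)$ are exactly those assumed here --- I obtain
\[
\mcF_\beta'(0)=\int_M H\langle X,\nu\rangle\,\rd\mcH^{n-1}+\int_\Gamma\langle X,\mu-\beta\bar\mu\rangle\,\rd\mcH^{n-2},
\]
while $\frac{\rd}{\rd t}\big|_{0}\vert E_t\vert=\int_M\langle X,\nu\rangle\,\rd\mcH^{n-1}$, with $\bar\mu=\onu$ the outward conormal of $\Gamma$ in $B^+$.

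Next I would handle the volume constraint by the usual Lagrange multiplier argument. Stationarity means $\mcF_\beta'(0)=0$ for every $X$ with $\int_M\langle X,\nu\rangle\,\rd\mcH^{n-1}=0$; pairing an arbitrary admissible field with a fixed field $X_0$ normalized so that $\int_M\langle X_0,\nu\rangle\,\rd\mcH^{n-1}=1$ (and correcting volumes to produce genuine admissible families) yields a constant $\lambda$ such that
\[
\int_M (H-\lambda)\langle X,\nu\rangle\,\rd\mcH^{n-1}+\int_\Gamma\langle X,\mu-\beta\bar\mu\rangle\,\rd\mcH^{n-2}=0
\]
for all $X$ tangent to $\p\Om$. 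Restricting to $X\in C^1_c(\Om;\mfR^n)$ kills the $\Gamma$-integral, and since $\langle X,\nu\rangle$ realizes arbitrary compactly supported test functions on ${\rm reg}M\cap\Om$, the fundamental lemma of the calculus of variations forces $H\equiv\lambda$ on ${\rm reg}M\cap\Om$. As ${\rm reg}M$ is $C^2$ up to $\Gamma$, the mean curvature extends continuously and $H\equiv\lambda$ on all of ${\rm reg}M$, which is claim (i).

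With the CMC condition in hand, the displayed identity reduces to $\int_\Gamma\langle X,\mu-\beta\bar\mu\rangle\,\rd\mcH^{n-2}=0$ for all $X$ tangent to $\p\Om$. Along $\Gamma$ I would decompose such an $X$ as $X=X^{T\Gamma}+\langle X,\bar\mu\rangle\bar\mu$; since both $\mu$ and $\bar\mu$ are orthogonal to $T\Gamma$, the integrand becomes $\langle X,\bar\mu\rangle(\langle\mu,\bar\mu\rangle-\beta)$, and the arbitrariness of $\langle X,\bar\mu\rangle$ on ${\rm reg}\Gamma$ gives $\langle\mu,\onu\rangle=\beta$. Writing $\cos\theta=\beta$ this is one half of \eqref{condi-Young-W1}. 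For the remaining sign I would work in the two-plane $(T_x\Gamma)^\perp$, where $\{\mu,\nu\}$ and $\{\onu,\oN\}$ are two orthonormal frames that bound the common region $E$ with opposite rotational orientation; a direct computation in that plane (verifiable on a flat model wedge) then gives $\langle\nu,\oN\rangle=-\langle\mu,\onu\rangle=-\cos\theta$, completing \eqref{condi-Young-W1}.

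The main obstacle is not the algebra but the analytic justification in the singular setting: one must confirm that the flows of $X$ genuinely produce admissible families (twice-differentiable energy, volume-preserving after correction) so that the definition of stationarity applies, and that the first-variation formula truly carries the boundary terms over $\Gamma$. This is precisely what \cref{Lem-divergenctheorem} is built to supply, so the real care lies in invoking it with the correct hypotheses and in checking that every integral over $M$, $B^+$ and $\Gamma$ is unaffected by the respective singular sets via \cref{remark1}; once this is secured, the separation into the interior (CMC) and boundary (Young) conditions and the final sign computation are routine.
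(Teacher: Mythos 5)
Your proposal is correct and follows essentially the same route as the paper: flow an ambient vector field tangent to $\p\Om$, correct the volume to get an admissible family, compute the first variation via the tangential divergence theorems of \cref{Lem-divergenctheorem}, and localize first in the interior (CMC) and then along $\Gamma$ (Young's law). Your explicit Lagrange-multiplier formulation is only a cosmetic variant of the paper's device of adding a compactly supported interior field $S_0$ to an arbitrary tangent field to restore the first-order volume constraint.
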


	\begin{proof}
 We argue as in \cite{SZ98}.
  Note that by assumptions, 
		\cref{Lem-divergenctheorem} is applicable here.
		
		%------------------
		{\bf Step 1. Constructing a family of admissible sets as in \cref{admissiblesets}}.
	
		We start from any variation that preserves the volume of $\Om$ at the first order when $t=0$.
		Precisely,
		let $X\in C_c^\infty(\mbR^n;\mbR^n)$ be any vector field satisfying
		\begin{align}
			\int_M\left<X,\nu\right>\rd\mcH^{n-1}(x)\label{integralofXcdotnuM}=0,\\
			X(x)\in T_x(\p\Om),\quad\forall x\in\p\Om\label{tangentalongboundary}.
		\end{align}
		By solving the Cauchy's problem:
		\begin{align}
			\frac{\p}{\p t}\Psi(t,x)=X(\Psi(t,x)),\quad &x\in\mbR^n,\label{x-flow}\\
			\Psi(0,x)=x,\quad &x\in\mbR^n,\label{3.4}
		\end{align}
		we obtain a local variation $\{\Psi_t\}_{\vert t\vert<T}$ for some small $T>0$, having $X$ as its initial velocity.
		Let $E_t:=\Psi_t(E)$, we see that $\Psi_t(\Om)\subset\Om$ by \eqref{tangentalongboundary}, and hence $E_t\subset\Om$.
		Setting $V(t):=\vert E_t\vert$,  following the same computations in the proof of \cite[Theorem 2.2]{SZ98},
		we find
	\begin{enumerate}
			\item $V'(0)=0,$
			\item $V''(0)=\int_M {\rm div}X\left<X,\nu\right>\rd\mcH^{n-1}(x).$
		\end{enumerate}
		
		Now we do some modifications inside $\Om$ to obtain a new family of admissible sets $\{\tilde{E}_t\}_{\vert t\vert<T}$, we begin by
		fixing any $x\in {\rm reg}M\cap \Om$, thanks to the regularity,
		$\p E$ can be locally written as the graph of some $C^2$-function $u_0:D'\ra\mbR^1$ ,where (up to a rotation) $D'$ is included in $R^{n-1}$ and is a neighborhood of the projection of $x$. Since $X\in C^\infty_c(\mbR^n;\mbR^n)$ satisfies \eqref{x-flow},   we can find a much smaller number, still denoted by $T$, such that not only $\p E$, but also $\p E_t$ for all $t\in(-T,T)$, can be written as a graph of a smooth function $u:D'\times(-T,T)\ra\mbR^1$ near $x$.
		
		Note that
		since $V(t)=\vert \Psi_t(E)\vert$ is second-order differentiable, and the Taylor expansion of $V(t)$ at $t=0$ is given by: $V(t)=V(0)+\frac{1}{2}t^2V''(0)+o(t^2)$,
		we can find some smooth function $g:D'\times(-T,T)\ra\mbR^1$ such that
		$g\mid_{\p D'}=0$ for any $t\in(-T,T)$,
		with
		\begin{align}\label{integrationofg}
			\int_{D'}g(x',t)\rd x'=
			\begin{cases}
				\frac{V(0)-V(t)}{t^2}\quad &t\neq0,\\
				-\frac{1}{2}V''(0) &t=0.
			\end{cases}
		\end{align}
		The new family of sets $\{\tilde{E}_t\}$ is defined via replacing the boundary portion of $\{\p E_t\}= \{\left(x',u(x',t)\right):x'\in D'\}$ by the new boundary part, denoted by $\{\p \tilde{E}_t\}$, and given by
		\begin{align*}
			\{\left(x,u(x',t)+t^2g(x',t)\right):x'\in D'\}.
		\end{align*}
		It suffices to check that such family of sets $\{\tilde E_t\}_{\vert t\vert<T}$ is admissible in the sense of \cref{admissiblesets}.
		Indeed,
		let $\tilde{V}(t)=\vert\tilde{E}_t\vert$, since $E_t$ and $\tilde{E}_t$ coincide outside $D'\times \mfR^1$ for any $t\in (-T,T)$, a direct computation gives
		\begin{align*}
			\tilde{V}(t)-V(t)
			&=\int_{D'}\left[\left(u(x',t)+t^2g(x',t)\right)-\left(u(x',t)\right)\right]\rd x'
			=t^2\int_{D'}g(x',t)\rd x'.
		\end{align*}
		Recalling \eqref{integrationofg}, we find
		\begin{align*}
			\tilde{V}(t)-V(t)=V(0)-V(t),\quad\forall t\in(-T,T),
		\end{align*}
		and it follows immediately that
		\begin{align*}
			\tilde{V}(t)=V(0),\quad\forall t\in(-T,T).
		\end{align*}
		This completes our first step.
	
		{\bf Step 2. First variation formula of the free energy functional}.
		
		For simplicity, we set $$\tilde{\mcF}_\beta(t):=P(\tilde{E}_t;\Om)-\beta P(\tilde{E}_t;\p\Om).$$
		Since $E_t$ and $\tilde{E_t}$ coincide outside $D'\times \mfR^1$ for any $t\in (-T,T)$, a simple computation gives
		\begin{align}\label{tildeF-F}
			\tilde{\mcF}_\beta(t)-\mcF_\beta(t)
			&=	\tilde{\mcF}_\beta(t)-\mcF_\beta(t)\mid_{D'}
			=\int_{D'}\left(\sqrt{1+\vert\nabla_{x'}(u+t^2g)\vert^2}-\sqrt{1+\vert\nabla_{x'}(u)\vert^2}\right)\rd x'.
		\end{align}
		Taking $t=0$ in the above equality, we find
		\begin{align*}
			\tilde{\mcF}_\beta'(0)-\mcF_\beta'(0)=0.
		\end{align*}
		The stationarity of $E$ yields that
	\begin{align}\label{eq-vanishing-firstvariation}
			\mcF_\beta'(0)=\tilde{\mcF}_\beta'(0)=0.
		\end{align}
		We need to write down the expression of $\mcF'_\beta(0)$.
		To proceed,
		%let us set $$A(t):=P(E_t;\Om),  B(t):=P(E_t;\p\Om).$$
		notice that $E_t\subset\Om$, and hence for any open set $U$ containing $\Om$, there holds: $$P(E_t;U)=P(E_t;\Om)+P(E_t;\p\Om).$$
		Applying the first variation formula of perimeter (see e.g., \cite[Theorem 17.5]{Mag12}) and by virtue of \eqref{eq-vanishing-firstvariation},
		%\begin{align*}
		    %A'(0)=\int_{M\cap\Om} {\rm div}_MX(x)d\mcH^{n-1}(x),
		    %\end{align*}
		%and
		%\begin{align*}
		%B'(0)=\int_{B^+}{\rm div}_{B^+}X(x)d\mcH^{n-1}(x).
		%\end{align*}
		%Combining these facts,
		we thus find:
		for any $X$ satisfying \eqref{integralofXcdotnuM},\eqref{tangentalongboundary}, there holds
		\begin{align}\label{stationary1stvariation}
			\tilde\mcF_\beta'(0)=\int_{M\cap\Om} {\rm div}_M X(x)\rd\mcH^{n-1}(x)-\beta\int_{B^+}{\rm div}_{B^+}X(x)\rd\mcH^{n-1}(x)=0.
		\end{align}
		Exploiting \eqref{formu-div-M} and \eqref{formu-div-B+}, we find	\begin{align}\label{interiorandBoundaryIntegral}
			\int_{M\cap\Om}H(x)\left<X, \nu\right>(x)\rd\mcH^{n-1}(x)+\int_\Gamma \left<X,\mu-\beta\onu\right>(x)\rd\mcH^{n-2}(x)=0.
		\end{align}
	%---------
		%---------	
		{\bf Step 3. Constant mean curvature and constant contact angle of the stationary set}.
		
		This is done by testing the first variation formula with suitable choices of vector fields.
		On the one hand,
		it is apparent that \eqref{tangentalongboundary} holds for any $X\in C^\infty_c(\Om;\mbR^n)$ satisfying \eqref{integralofXcdotnuM}.
		For any such $X$, \eqref{interiorandBoundaryIntegral} is just
		\begin{align*}
			\int_{M\cap\Om} H(x)\left<X,\nu\right>(x)\rd\mcH^{n-1}(x)=0,
		\end{align*} 
		and it follows
		%from \cite{SZ98}
		that
		${\rm reg}M\cap\Om$ is of constant mean curvature. Namely, for some constant $H$, we have
		\begin{align*}
			H(x)=H,\quad \forall x\in {\rm reg}M\cap\Om.
		\end{align*}
		Back to \eqref{interiorandBoundaryIntegral}, we thus find:
		for any $X$ satisfying \eqref{integralofXcdotnuM},\eqref{tangentalongboundary},
		\begin{align}\label{constancontactangle}
			\int_\Gamma \left<X,\mu-\beta\onu\right>(x)\rd\mcH^{n-2}(x)=0.
		\end{align}
		On the other hand, we will conclude from \eqref{constancontactangle} that ${\rm reg}M$ has constant contact angle $\theta$ with $\p\Om$, where $\cos\theta=\beta$, i.e., $$\left<\nu,\oN\right>=-\cos\theta=-\left<\mu,\onu\right>\text{ on } {\rm reg}\Gamma.$$
		We begin by showing that \eqref{constancontactangle} holds for any $X_0\in C^2_c(\mbR^n;\mbR^n)$ satisfying \eqref{tangentalongboundary}.
		Indeed, for any $X_0\in C^\infty_c(\mbR^n;\mbR^n)$ satisfying \eqref{tangentalongboundary}, there exists $s>0$ and $S_0\in C^\infty_c(\Om;\mbR^n)$ such that $X:=S_0+sX_0\in C^\infty_c(\mbR^n;\mbR^n)$ satisfies \eqref{integralofXcdotnuM},\eqref{tangentalongboundary}. 
		
		Testing \eqref{constancontactangle} with $X$, we then conclude that
		\begin{align*}
			\int_\Gamma \left<X_0,\mu-\beta\onu\right>(x)\rd\mcH^{n-2}(x)=0
		\end{align*}
		holds for any $X_0\in C^\infty_c(\mbR^n;\mbR^n)$ and $X_0(x)\in T_x(\p\Om)$ for any $x\in\p\Om$.
		
		Notice that for any such $X_0$, along $\Gamma$, there holds	\begin{align*}
			\left<X_0, \mu\right>=\left<X_0,\left<\mu,\onu\right>\onu\right>,
		\end{align*}
		and hence we have: 
		\begin{align}\label{X0andConstantAngle}
			\int_\Gamma \left<X_0,\left(\left<\mu,\onu\right>-\beta\right)\onu\right>\rd\mcH^{n-2}(x)=0
		\end{align}
		for any $X_0\in C^\infty_c(\mbR^n;\mbR^n)$ satisfying \eqref{tangentalongboundary}.
		
		By virtue of the fundamental lemma of calculus of variations, we obtain: $$\left<-\nu,\oN\right>=\left<\mu,\onu\right>=\beta=\cos\theta \text{ for any }x\in {\rm reg}\Gamma.$$
		%---------
		%{\bf Step 4. Interior regularity.}
		
		%Finally, the argument above imply that ${\rm var}(M,1)$ is a $(n-1)$-rectifiable varifold with constant generalized mean curvature in $\mfR^n$. Applying the Allard's regularity theorem (see for example \cite[Theorem 24.2]{Sim83}), we see that ${\rm reg}M\cap\Om$ must be relatively open in $M$ and is locally an analytic hypersurface with constant mean curvature in $\mfR^n$.
		%---------------
	\end{proof}
\begin{proposition}\label{Prop-StableSet}
		Let $E\subset\Om$ be a set of finite perimeter, which is stable for $\mcF_\beta$ under volume constraint.
		Assume that $M=\overline{\p E\cap\Om}$ and $\Gamma=M\cap\p\Omega$ satisfy the local Euclidean volume growth condition \eqref{ineq-vg-M} and \eqref{ineq-vg-Gamma} respectively. Assume in addition that  $\mcH^{n-3}({\rm sing}M)=0$
    and $H\in L^1(M).$
		Then for any $C^2$-function $\zeta: {\rm reg}M\ra\mbR$ satisfying the integrability conditions:
		\begin{align}\label{condi-integrability}
		    \zeta\in L^2(M)\cap L^2(\Gamma),\quad
		    %\zeta\vert\nabla^M\zeta\vert\in L^1(M)\cap L^1(\Gamma),\notag\\
		    (\zeta\De_M\zeta+\vert\vert h\vert\vert^2\zeta^2)\in L^1(M),\quad
		    \left(\left<\zeta\nabla^M\zeta, \mu\right>-q\zeta^2\right)\in L^1(\Gamma)
		\end{align}
		with
		\begin{align*}
			\int_{{\rm reg}M}\zeta(x)\rd\mcH^{n-1}(x)=0,
		\end{align*}
		the following Poincar\'e-type inequality holds:
		%\begin{align}\label{result1}
			%J(\zeta):=\int_{{\rm reg}M\cap\Om}\left(\vert\nabla^M\zeta\vert^2
			%-\vert\vert h\vert\vert^2\zeta^2\right)\rd\mcH^{n-1}(x)
			%-\int_{{\rm reg}M\cap\p\Om}q\zeta^2\rd\mcH^{n-2}\geq0.
		%\end{align}
		\begin{align}\label{ineq-Poincare}
			J(\zeta):=-\int_{M\cap\Om}\left(\zeta\Delta_M\zeta+\vert\vert h\vert\vert^2\zeta^2\right)\rd\mcH^{n-1}(x)
			+\int_{\Gamma}\left(\left<\zeta\nabla^M\zeta, \mu\right>-q\zeta^2\right)\rd\mcH^{n-2}\geq0,
		\end{align}
		where
		\begin{align}\label{q}
	q=\frac{1}{\sin\theta}h^{\p\Om}( \onu,\onu)+\cot\theta h(\mu,\mu ).
		\end{align}
\end{proposition}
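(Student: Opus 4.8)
The plan is to recognize $J(\zeta)$ as the second variation $\mathcal{F}_\beta''(0)$ along a volume-preserving family whose initial normal speed is $\zeta$, and to conclude from stability. Being stable, $E$ is in particular stationary, so \cref{Prop-capillaryCMC} applies: on ${\rm reg}M$ the surface is smooth, of constant mean curvature $H$, and meets $\partial\Omega$ at the constant angle $\theta$ with \eqref{condi-Young-W1}. First I would treat a test function $u\in C_c^\infty({\rm reg}M)$ with $\int_M u\,d\mathcal{H}^{n-1}=0$. Exactly as in Step~1 of the proof of \cref{Prop-capillaryCMC}, flowing a vector field $X$ with $\langle X,\nu\rangle=u$ on $M$ and $X$ tangent to $\partial\Omega$, and adding the second-order correction $g$ that restores $|\tilde E_t|=|E|$, produces an admissible family $\{\tilde E_t\}$. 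Computing $\mathcal{F}_\beta''(0)$ through the first and second variation of perimeter and inserting the stationarity conditions (the Ros--Souam computation, carried out on the smooth region carrying ${\rm spt}\,u$) gives $\mathcal{F}_\beta''(0)=J(u)$ in its gradient form $\int_M(|\nabla^M u|^2-\|h\|^2u^2)\,d\mathcal{H}^{n-1}-\int_\Gamma qu^2\,d\mathcal{H}^{n-2}$. Stability then forces $J(u)\ge 0$ for every such $u$.

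The second and decisive step removes the compact-support hypothesis. For small $\epsilon>0$ let $\varphi_\epsilon$ be the cut-off of \cref{Lem-Cut-off} with $q=2$, admissible because $\mathcal{H}^{n-3}({\rm sing}M)=0$, and put $\zeta_\epsilon=\varphi_\epsilon\zeta$; as $\varphi_\epsilon$ vanishes near ${\rm sing}M$, $\zeta_\epsilon\in C^\infty_c({\rm reg}M)$. Its mean $c_\epsilon:=\int_M\zeta_\epsilon$ tends to $\int_M\zeta=0$ by dominated convergence, so fixing $\psi\in C^\infty_c({\rm reg}M\cap\Omega)$ with $\int_M\psi=1$ and setting $\hat\zeta_\epsilon:=\zeta_\epsilon-c_\epsilon\psi$ yields a zero-mean competitor to which Step~1 applies: $J(\hat\zeta_\epsilon)\ge 0$. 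Since $\varphi_\epsilon\equiv 1$ on ${\rm spt}\,\psi$ for small $\epsilon$, the extra bilinear terms are $\epsilon$-independent and are multiplied by $c_\epsilon\to0$, so $J(\hat\zeta_\epsilon)=J(\zeta_\epsilon)+o(1)$.

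The algebraic heart is a clean expansion of $J(\zeta_\epsilon)$. From the pointwise identity $|\nabla^M(\varphi_\epsilon\zeta)|^2=\nabla^M(\varphi_\epsilon^2\zeta)\cdot\nabla^M\zeta+\zeta^2|\nabla^M\varphi_\epsilon|^2$ and Green's identity on ${\rm reg}M$ applied to the pair $(\varphi_\epsilon^2\zeta,\zeta)$ (legitimate as $\varphi_\epsilon^2\zeta$ vanishes near ${\rm sing}M$), one obtains
\begin{align*}
J(\zeta_\epsilon)=-\int_M\varphi_\epsilon^2\bigl(\zeta\Delta_M\zeta+\|h\|^2\zeta^2\bigr)\,d\mathcal{H}^{n-1}+\int_\Gamma\varphi_\epsilon^2\bigl(\langle\zeta\nabla^M\zeta,\mu\rangle-q\zeta^2\bigr)\,d\mathcal{H}^{n-2}+\int_M\zeta^2\,|\nabla^M\varphi_\epsilon|^2\,d\mathcal{H}^{n-1}.
\end{align*}
As $0\le\varphi_\epsilon^2\le 1$ and $\varphi_\epsilon\to1$ pointwise on ${\rm reg}M$, the first two integrals converge to the two integrals defining $J(\zeta)$ by dominated convergence, their integrands lying in $L^1$ exactly by the hypotheses \eqref{condi-integrability}.

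I expect the only genuine obstacle to be the vanishing of the error term $\int_M\zeta^2|\nabla^M\varphi_\epsilon|^2$, and this is precisely where the dimensional bound on ${\rm sing}M$ and the quantitative cut-off estimate \eqref{esti-cutoff-1} enter: on the annuli carrying $\nabla^M\varphi_\epsilon$ one has $|\nabla^M\varphi_\epsilon|\le 2/r_i$, so the Euclidean volume growth of $M$ and $\sum_i r_i^{n-3}<\epsilon$ give $\int_M|\nabla^M\varphi_\epsilon|^2\le C\epsilon$. Hence $\int_M\zeta^2|\nabla^M\varphi_\epsilon|^2\le C\epsilon\,\|\zeta\|_{L^\infty}^2\to 0$ once $\zeta$ is controlled near ${\rm sing}M$---which is the case for the bounded test functions supplied by the Minkowski-type formula used later; in general one refines the covering so that the weighted sum $\sum_i r_i^{-2}\int_{M\cap B_{2r_i}(z_i)}\zeta^2$ stays small. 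Granting this, $J(\zeta_\epsilon)\to J(\zeta)$, and together with $J(\hat\zeta_\epsilon)\ge 0$ and the harmless correction this yields $J(\zeta)\ge 0$, as required.
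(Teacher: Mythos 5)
Your proposal is correct and follows essentially the same route as the paper: stability of the second variation (via the Ros--Souam formula applied to the flow of a tangential vector field of the form $-\zeta(\cot\theta\,\mu+\nu)$), the cut-off functions of \cref{Lem-Cut-off} with $q=2$, a small correction restoring the zero-mean condition, and dominated convergence based on the integrability hypotheses \eqref{condi-integrability}. Your reorganization --- first proving $J\geq 0$ for test functions supported away from ${\rm sing}M$ and then passing to the limit at the level of the quadratic form rather than inside the variational computation --- is only cosmetically different, and you rightly isolate the one delicate point, the vanishing of $\int_M\zeta^2\vert\nabla^M\varphi_\epsilon\vert^2\,\rd\mcH^{n-1}$, which the paper itself treats rather tersely and which likewise requires $\zeta$ to be controlled near ${\rm sing}M$ (as it is for the test functions actually used later).
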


To derive the second variation formula, we need the following classical computations that are carried out on ${\rm reg}M$.
	\begin{lemma}[\cite{RS97}, Lemma 4.1]\label{Lem-RS97}
		Let $E\subset\Om$ be as in \cref{Prop-StableSet} and $\Psi_t$ be a $C^2$-variation whose initial velocity $X:=\frac{\p}{\p t}\mid_{t=0}\Psi_t$ satisfies \eqref{integralofXcdotnuM} and \eqref{tangentalongboundary}.
		Let $X_t(x):=\frac{\p}{\p s}\mid_{s=t}\Psi_s(x)$ denote the velocity of the variation at $t$. %and, for ease of notation, we use $X$ to represent the initial velocity $X_t\mid_{t=0}$.
		Let $\nabla^M$, $\tilde{\nabla}$ denote the gradient on ${\rm reg}M, {\rm reg}\Gamma$, respectively,
		and $X_M^T$ (resp. $X_\Gamma^T$) the tangential part of $X$ with respect to $M$ (resp. to $\Gamma$).
		Let also $S_0,S_1,S_2$ denote respectively the classical shape operator in differential geometry, of ${\rm reg}M$ in $\mbR^n$ with respect to $-\nu$, of ${\rm reg}\Gamma$ in $M$ with respect to $\mu$ and of ${\rm reg}\Gamma$ in $\p B^{+}$ with respect to $\onu$.
		Let $f$ be the $C^2$-function defined on ${\rm reg}M$ by $f=\left<X,-\nu\right>$, then
		on ${\rm reg}M$, there holds:
		\begin{enumerate}
			\item $(-\nu)'=-\nabla^M f-S_0(X_M^T)$,
			\item $(\mu)'=\left(\frac{\p f}{\p\mu}+h(X_M^T,\mu)\right)(-\nu)+fS_0(\mu)-fh\left(\mu,\mu\right)\mu-S_1(X_\Gamma^T)+\cot\theta\nabla^{B^+}f$,
			\item $(\onu)'=-h^{\p\Om}(X,\onu)\oN-S_2(X_\Gamma^T)+\frac{1}{\sin\theta}\nabla^{B^+}f$,
			\item\label{4-0} 					$\left<X',\mu-\beta\onu\right>+\left<X,(\mu)'-(\beta\onu)'\right>=
			f\frac{\p f}{\p \mu}-qf^2$,
		\end{enumerate}
		where $q$ is given by \eqref{q}.
		Here we denote by a "prime" the first derivative $\frac{\p}{\p t}\mid_{t=0}$ in the Euclidean space $\mbR^n$.
	\end{lemma}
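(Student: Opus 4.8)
The plan is to regard \cref{Lem-RS97} as a purely pointwise, classical computation on ${\rm reg}M$ and ${\rm reg}\Gamma$, where all objects are smooth, so that the singular set plays no role and the four identities reduce to variation-of-frame formulas as in \cite{RS97,AS16,Souam21}. The basic tool is the flow rule: if $v_t=D\Psi_t(v_0)$ is a vector transported by the flow \eqref{x-flow}--\eqref{3.4}, then, using the Euclidean connection of $\mbR^n$, one has $v'=\nabla_v X$. Since $E$ is stationary, \cref{Prop-capillaryCMC} makes ${\rm reg}M$ a capillary surface meeting $\p\Om$ at the constant angle $\theta$ with $\cos\theta=\beta$; hence along ${\rm reg}\Gamma$ the normal $2$-plane to $T\Gamma$ carries two orthonormal bases $\{\nu,\mu\}$ and $\{\oN,\onu\}$ differing by the fixed rotation through $\theta$, so that $\langle\nu,\oN\rangle=-\cos\theta$, $\langle\mu,\onu\rangle=\cos\theta$, $\langle\mu,\oN\rangle=\sin\theta$ and $\onu=\cos\theta\,\mu-\sin\theta\,\nu$. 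Writing $X=X_M^T-f\nu$ on $M$ and, along $\Gamma$, decomposing $X=X_\Gamma^T+\tfrac{f}{\sin\theta}\,\onu$ (forced by $\langle X,\oN\rangle=0$ and $\langle X,-\nu\rangle=f$) one also gets $\langle X,\mu\rangle=f\cot\theta$; these rotation relations are the key algebraic input that will manufacture the $\cot\theta$ and $\tfrac{1}{\sin\theta}$ factors.

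I would first prove (1). As $\langle\nu,\nu\rangle=1$, the vector $\nu'$ is tangent to $M$, and for a tangent frame $\{e_i\}$ one computes $\langle\nu',e_i\rangle=-\langle\nu,e_i'\rangle=-\langle\nu,\nabla_{e_i}X\rangle$. Using $X=X_M^T-f\nu$, the identity $\langle\nu,\nabla_{e_i}X_M^T\rangle=-h(e_i,X_M^T)=-\langle S_0(X_M^T),e_i\rangle$, and $\langle\nu,\nabla_{e_i}\nu\rangle=0$, this gives $\langle\nu',e_i\rangle=e_i f+\langle S_0(X_M^T),e_i\rangle$, i.e. $(-\nu)'=-\nabla^M f-S_0(X_M^T)$.

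Next I would establish (2) and (3) by differentiating the defining constraints and projecting onto the adapted frame. For $\mu$ (unit, tangent to $M$, normal to $\Gamma$), differentiating $\langle\mu,\mu\rangle=1$, $\langle\mu,\nu\rangle=0$ and $\langle\mu,\tau_a\rangle=0$ for $\tau_a\in T\Gamma$ expresses $\mu'$ through its $\nu$-, $\mu$- and $T\Gamma$-components: the $(-\nu)$-component equals $\langle\mu,\nu'\rangle=\tfrac{\p f}{\p\mu}+h(X_M^T,\mu)$ by (1), the $\mu$-component vanishes by unit length, and the $T\Gamma$-components yield $fS_0(\mu)-fh(\mu,\mu)\mu-S_1(X_\Gamma^T)$ after inserting $X_M^T=X_\Gamma^T+f\cot\theta\,\mu$ along $\Gamma$, the leftover $f\cot\theta\,\mu$ piece producing exactly $\cot\theta\,\nabla^{B^+}f$. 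The computation of $\onu'$ in (3) is parallel but relative to the fixed hypersurface $\p\Om$: differentiating $\langle\onu,\oN\rangle=0$ brings in the second fundamental form $h^{\p\Om}$ (the term $-h^{\p\Om}(X,\onu)\oN$), the constraint that $\onu\in T B^+$ is normal to $\Gamma$ gives $-S_2(X_\Gamma^T)$, and the speed $\tfrac{f}{\sin\theta}$ of the contact line along $\onu$ produces $\tfrac{1}{\sin\theta}\nabla^{B^+}f$.

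Finally, (4) follows by substitution: I would compute $X'=\nabla_X X$, insert (2) and (3) (with $\beta=\cos\theta$) into $\langle X',\mu-\beta\onu\rangle+\langle X,(\mu)'-(\beta\onu)'\rangle$, and simplify. Here the tangential contributions $S_1(X_\Gamma^T)$, $S_2(X_\Gamma^T)$ and the two $\nabla^{B^+}f$ terms must combine and cancel against the matching pieces of $\langle X',\mu-\beta\onu\rangle$ by the rotation relations of the first paragraph, leaving precisely $f\tfrac{\p f}{\p\mu}-q f^2$ with $q=\tfrac{1}{\sin\theta}h^{\p\Om}(\onu,\onu)+\cot\theta\,h(\mu,\mu)$. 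I expect this last step to be the main obstacle: it is the only place where all three previous formulas, the decomposition $X=X_\Gamma^T+\tfrac{f}{\sin\theta}\onu$, and the contact-angle identities are used simultaneously, and where one must verify that every tangential and gradient term absent from $f\tfrac{\p f}{\p\mu}-q f^2$ indeed cancels. Because every manipulation takes place on the smooth ${\rm reg}M$ and ${\rm reg}\Gamma$, no regularity, volume-growth or cut-off considerations enter, and the resulting identities coincide with those of \cite[Lemma 4.1]{RS97}.
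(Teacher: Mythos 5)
The paper does not prove this lemma at all: it is quoted verbatim from Ros--Souam \cite[Lemma 4.1]{RS97} and used as a black box, so there is no internal proof to compare against. Your outline is the standard derivation (and, as far as I recall, essentially the one in \cite{RS97}): item (1) is correct and fully carried out; the strategy for (2)--(4) --- differentiate the orthonormality and tangency constraints, decompose $X$ in the two adapted frames along $\Gamma$, and use the constant-angle relations to produce the $\cot\theta$ and $\tfrac{1}{\sin\theta}$ factors --- is the right one, and the identification $X'=\nabla_XX$ for the flow of an autonomous field is correct.

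Two concrete issues remain. First, your frame relations along $\Gamma$ are internally inconsistent: from $\left<\nu,\oN\right>=-\cos\theta$ and $\left<\mu,\oN\right>=\sin\theta$ together with $\onu\perp\oN$ and $\left<\mu,\onu\right>=\cos\theta$ one is forced to $\onu=\cos\theta\,\mu+\sin\theta\,\nu$, whereas you write $\onu=\cos\theta\,\mu-\sin\theta\,\nu$ (your choice gives $\left<\onu,\oN\right>=2\sin\theta\cos\theta\neq0$). This propagates into the coefficient of $\onu$ in the decomposition of $X$ along $\Gamma$ and into the sign of $\left<X,\mu\right>=\pm f\cot\theta$; since (4) is quadratic in these quantities the final answer may survive, but the intermediate formulas (2) and (3) are sign-sensitive and must be fixed against a single consistent orientation convention. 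Second, item (4) --- the only part of the lemma actually used in the proof of \cref{Prop-StableSet} --- is asserted rather than verified: you correctly identify it as the step where $S_1(X_\Gamma^T)$, $S_2(X_\Gamma^T)$, the two $\nabla^{B^+}f$ terms, and the pieces of $\left<X',\mu-\beta\onu\right>$ must all cancel to leave $f\frac{\p f}{\p\mu}-qf^2$ with $q$ as in \eqref{q}, but that cancellation is the entire content of the claim and needs to be written out. Your observation that everything is pointwise on ${\rm reg}M$ and ${\rm reg}\Gamma$, so that no cut-off or regularity considerations enter, is correct and consistent with how the paper uses the lemma.
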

	%-------------------------
	
	\begin{proof}[Proof of \cref{Prop-StableSet}]
		
		Consider any $C^2$-function $\zeta:{\rm reg}M\ra\mbR^1$ that has the desired integrability and satisfies $\int_M\zeta \rd\mcH^{n-1}(x)=0$ .
		For any small $\ep>0$, we have $\varphi_\ep,S'_\ep$ and $S_\ep$ from \cref{Lem-Cut-off} (notice that this is the case when $q=2$).
		
		First, we consider a $C^2$-extension of $\zeta$ from $M\setminus S_\ep'$ to $\mfR^n$ (still denoted by $\zeta$),
		and set $\tilde\zeta_\ep:=\varphi_\ep\cdot\zeta$.
		%$$\tilde\zeta_\ep=
		%\begin{cases}
		    %0\quad&\text{in }S'_\ep,\\
		    %\varphi_\ep\cdot\zeta\quad&\text{in }\mfR^n\setminus S_\ep'.
		%\end{cases}$$	
		By virtue of \cref{Lem-Cut-off}, we claim that
		\begin{enumerate}
			\item $\tilde{\zeta}_\epsilon\equiv0$ on $S_\epsilon'$,
			\item $\tilde{\zeta}_\epsilon\equiv \zeta$ on $M\setminus S_\epsilon$,
			\item $\left(\tilde\zeta_\ep\Delta_M\tilde\zeta_\ep+\vert\vert h\vert\vert^2\tilde\zeta_\ep^2\right)\ra\left(\zeta\Delta_M\zeta+\vert\vert h\vert\vert^2\zeta^2\right)$ in $L^1(M)$ as $\ep\searrow0$
			\item $\left(\left<\tilde\zeta_\ep\nabla^M\tilde\zeta_\ep, \mu\right>-q\tilde\zeta_\ep^2\right)\ra\left(\left<\zeta\nabla^M\zeta, \mu\right>-q\zeta^2\right)$ in $L^1(\Gamma)$ as $\epsilon\searrow0$.
		
		\end{enumerate}
		 (1)(2) are obvious. For (3), we see that
		\begin{align*}
		  & \int_M \left(\tilde\zeta_\ep\Delta_M\tilde\zeta_\ep+\vert\vert h\vert\vert^2\tilde\zeta_\ep^2\right) \rd\mcH^{n-1}\\
    =&\int_M\varphi_\ep^2\left(\zeta\De_M\zeta+\vert\vert h\vert\vert^2\zeta^2\right)+\left<\nabla^M\varphi_\ep^2,\zeta\nabla^M\zeta\right>+\zeta^2\varphi_\ep\De_M\varphi_\ep\rd\mcH^{n-1}\\
     =&\int_M\varphi_\ep^2\left(\zeta\De_M\zeta+\vert\vert h\vert\vert^2\zeta^2\right)-\zeta^2\vert\nabla^M\varphi_\ep\vert^2\rd\mcH^{n-1}+\int_\Gamma\zeta^2\varphi_\ep\left<\nabla^M\varphi_\ep,\mu\right>\rd\mcH^{n-2}.
		\end{align*}
		Using \cref{Lem-Cut-off} and the integrability assumptions, we get (3) from the dominated convergence theorem. In a similar way, we can get
		(4).
		
		 Recall that $S_\epsilon \subset\bigcup_{i=1}^{N_1}(B_{(2+\alpha)r_i}(z_i))$ for an arbitrary small $\alpha>0$, as shown in the proof of \cref{Lem-Cut-off}.
		 Using the volume growth condition \eqref{ineq-vg-M}, we find
		\begin{align*}
			\int_M\left(\zeta-\tilde{\zeta}_\epsilon\right) \rd\mcH^{n-1}
			=\int_{M\cap S_\epsilon}(1-\varphi_\epsilon)\zeta\rd\mcH^{n-1}
			%\leq\sup_{{\rm reg}M\cap S_\epsilon}\vert\zeta\vert
		%\sum_{i=1}^{N_1}\mcH^{n-1}\left(M\cap B_{(2+\alpha)r_i}(z_i)\right)
			%\leq C_6 \epsilon.
			\leq(\int_{M\cap S_\ep}1\rd\mcH^{n-1})^{1/2}\vert\vert\zeta\vert\vert_{L^2(M)}\leq C\ep^{1/2}.
		\end{align*}
		
		With this estimate, we can modify $\tilde{\zeta}_\epsilon$ on $M\setminus S_\epsilon$, and obtain a new $C^2$-function, denoted by $\zeta_\epsilon$, with the following properties: 
		\begin{enumerate}
			\item $\zeta_\epsilon\equiv0$ on $S_\epsilon'$,
				\item $\int_M \zeta_\epsilon d\mcH^{n-1}=0$.
			%\item $\vert \zeta_\epsilon-\zeta\vert<\epsilon$ on $M\setminus S_\epsilon$,
			\item $\left(\zeta_\ep\Delta_M\zeta_\ep+\vert\vert h\vert\vert^2\zeta_\ep^2\right)\ra\left(\zeta\Delta_M\zeta+\vert\vert h\vert\vert^2\zeta^2\right)$ in $L^1(M)$ as $\ep\searrow0$
			\item $\left(\zeta_\ep\left<\nabla^M\zeta_\ep, \mu\right>-q\zeta_\ep^2\right)\ra\left(\left<\zeta\nabla^M\zeta, \mu\right>-q\zeta^2\right)$ in $L^1(\Gamma)$ as $\epsilon\searrow0$.
		\end{enumerate}
		Precisely,
		we may find a smooth function $\eta:\mbR^n\ra\mbR^1$ s.t.,
		\begin{enumerate}
			\item $\int_M\eta \rd\mcH^{n-1}=1$,
			\item $\eta\equiv0$ on $S_\epsilon'$.
		\end{enumerate}
		By setting ${\rm err}(\epsilon)=\int_M(\zeta-\tilde{\zeta}_\epsilon) \rd\mcH^{n-1}$, and $\eta_\epsilon={\rm err}(\epsilon)\eta$, we get
		\begin{enumerate}
			\item $\int_M\eta_\epsilon \rd\mcH^{n-1}=\int_M {\rm err}(\epsilon)\eta \rd\mcH^{n-1}={\rm err}(\epsilon)$,
			\item $\eta_\epsilon\equiv0$ on $S_\epsilon'$,
			\item $\vert\eta_\epsilon\vert_{L^\infty(M)}\leq \vert {\rm err}(\epsilon)\vert\cdot \sup_M \vert\eta\vert\leq C\epsilon^{1/2}$,
			\item
			$\vert\nabla\eta_\ep\vert_{L^\infty(M)}\leq\vert{\rm err}(\ep)\vert\cdot\sup_M\vert\nabla\eta\vert\leq C\ep^{1/2}$,
			\item
			$\vert\nabla^2\eta_\ep\vert_{L^\infty(M)}\leq\vert{\rm err}(\ep)\vert\cdot\sup_M\vert\nabla^2\eta\vert\leq C\ep^{1/2}$.
		\end{enumerate}
		Let $\zeta_\epsilon:=\tilde{\zeta}_\epsilon+\eta_\epsilon$.
  Notice that $|\De_M\eta_\ep|\le |\n^2 \eta|+|H||\n\eta|$, by the assumption $H\in L^1(M)$, we see $\|\De_M\eta_\ep\|_{L^1(M)}\le C\ep^{\frac12}$. It is easy to check $\zeta_\epsilon$ satisfies all the desired properties.
		
		%Moreover, we extend $\zeta_\epsilon$ smoothly to $\mbR^n$.
		%with only the requirement that $\left<\nabla\zeta_\epsilon,N\right>=0$ on $M\setminus S_\epsilon'$.
	Since ${\rm sing}M\subset S'_\ep$, we may find some vector field $\nu_\epsilon\in C^2_c(\mbR^n;\mbR^n)$ satisfying
		\begin{enumerate}
			\item $\vert \nu_\epsilon\vert=1$ in a neighborhood of $M\setminus S_\epsilon'$,
			\item $\nu_\epsilon=\nu$ on $M\setminus S'_\epsilon$,
		\end{enumerate}
		and some vector field $\mu_\epsilon\in C^2_c(\mbR^n;\mbR^n)$ satisfying 
		\begin{enumerate}
			\item $\vert \mu_\epsilon\vert=1$ in a neighborhood of $(M\setminus S_\epsilon')\cap\p\Om$,
			\item $\mu_\ep(x)\in T_xM$ on $M\setminus S'_\ep$,
			\item $\mu_\epsilon=\mu$ on $(M\setminus S'_\epsilon)\cap\p\Om$,
			%\item $\mu_\epsilon(x)\in T_xM$ for any $x\in {\rm reg}M$,
		\end{enumerate}
		such that 
		the vector field $X_\ep:=-\zeta_\epsilon\left(\frac{\beta}{\sqrt{1-\beta^2}}\mu_\ep
			+\nu_\epsilon\right)$ satisfies:
			\begin{align*}
			    X_\epsilon(x)\in T_x(\p\Om)\quad\text{for all }x\in\p\Om,
			\end{align*}
		
		%Finally, let $X_\epsilon\in C^2(\mbR^n;\mbR^n)$ be a vector field satisfying
		
		%\begin{enumerate}
			%\item 	$X_\epsilon=-\zeta_\epsilon\left(\frac{\beta}{\sqrt{1-\beta^2}}\mu_\ep
			%+N_\epsilon\right)$ in some neighborhood of $M$,
			%\item $X_\epsilon(x)\in T_x(\p\Om)$ for all $x\in\p\Om$\footnote{This condition is satisfied (by the argument explained below) on ${\rm reg} M \cap \partial \Omega$, but away from M it depends on the particular choice of $N_\epsilon$ and of $\mu_\epsilon$.}.
		%\end{enumerate}
		Notice that such $X_\epsilon$ exists since these conditions can both be satisfied by virtue of the fact that $M$ intersects $\p\Om$ with the constant contact angle $\theta$, where $\cos\theta=\beta$ and $\frac{\beta}{\sqrt{1-\beta^2}}=\cot{\theta}$, and hence on ${\rm reg}M\cap\p\Om$, $\frac{\beta}{\sqrt{1-\beta^2}}\mu_\epsilon(x)
		+\nu_\epsilon(x)\in T_x\p\Om$.
		
		A direct computation then gives
		\begin{align*}
			\int_M \left<X_\epsilon,-\nu\right>\rd\mcH^{n-1}
			=\frac{\beta}{\sqrt{1-\beta^2}}\int_M\zeta_\epsilon \left<\mu_\epsilon,\nu\right>\rd\mcH^{n-1}+\int_M\zeta_\epsilon\left<\nu_\epsilon,\nu\right>\rd\mcH^{n-1}
			=0,
		\end{align*}
		which shows that $X_\epsilon$ satisfies \eqref{integralofXcdotnuM}, \eqref{tangentalongboundary}.
		Let $\Psi_t^\ep$ denotes the $C^2$-local variation induced by $X_\ep$.
		Following the same argument as in the proof of \cref{Prop-capillaryCMC}, we  obtain an admissible family of sets $\{\tilde{E}^\ep_t\}$ by a smooth modification through the graph function (denoted by $g_\epsilon$) inside $\Om$.
		By virtue of \eqref{integrationofg} and \eqref{tildeF-F} and the fact that $E$ is stable, we get
		\begin{align}\label{condi-stable}
		    0\leq\frac{\rd^2}{\rd t^2}\mid_{t=0}\mcF_\beta(\Psi_t^\ep(E))-H\frac{\rd^2}{\rd t^2}\mid_{t=0}V(\Psi_t^\ep(E)).
		\end{align}
		
		Recall that for any $\ep>0$, we have $\varphi_\ep,S_\ep,S'_\ep$ from \cref{Lem-Cut-off}.
		Notice that since $\zeta_\ep\equiv0$ on $S'_\ep$, and hence instead of $\Psi_t^\ep(E)$, it suffice to consider the following sets when dealing with derivatives: $\Psi_t^\ep(E\setminus S'_\ep)$, $M^\ep_t:=\Psi_t^\ep(M\setminus S_\ep')$, $B^\ep_t:=\Psi_t^\ep(B^+\setminus S'_\ep)$. We denote by $A^\ep(t)$ the area of $M_t^\ep$ and $B^\ep(t)$ the area of $B^t_\ep$.
		
		Note that these sets are smooth enough since ${\rm sing}M\subset S'_\ep$, so that we can use the classical divergence theorem and tangential divergence theorem in the following.
		
		Let $\mcF_\beta^\ep(t):=A^\ep(t)-\beta B^\ep(t)$, our aim is to derive the explicit form of $\frac{\rd^2}{\rd t^2}\mid_{t=0}\mcF^\ep_\beta(0)$ and then appeal to the stable condition
		
		To this end, we first observe that for small enough time $t$, $\Psi_t^\ep(M\setminus S'_\ep)$ is regular enough and hence
		we can use the classical divergence theorem and the area formula to see that
		\begin{align*}
		    \frac{\rd}{\rd t}A^\ep(t)
		=&\int_{\Psi^\ep_t(M\setminus S'_\ep)}{\rm div}_{M^\ep_t}X_\ep\mid_y\rd\mcH^{n-1}(y)
		    \\=&\int_{\Psi^\ep_t(M\setminus S'_\ep)}H(t)\left<X_\ep,\nu_t\right>\mid_y\rd\mcH^{n-1}(y)
		    +\int_{\Psi^\ep_t(\Gamma\setminus S'_\ep)}\left<X_\ep,\mu_t\right>\mid_y\rd\mcH^{n-2}(y)
      \\=&\int_{M\setminus S'_\ep}H(t)\left<X_\ep,\nu_t\right>\mid_{\Psi^\ep_t(x)}{\rm J}^{{\rm reg}M}\Psi^\ep_t(x)\rd\mcH^{n-1}(x)\\&+\int_{\Gamma\setminus S_\ep'}\left<X_\ep,\mu_t\right>\mid_{\Psi^\ep_t(x)}{\rm J}^{{\rm reg}M}\Psi^\ep_t(x)\rd\mcH^{n-2}(x).
		\end{align*}
		where $\nu_t$ denotes the unit outer unit normal of $\Psi_t^\ep(M\setminus S'_\ep)$ and $\mu_t,\bar\nu_t$ are understood in a similar way.
		Similarly, we get
		\begin{align*}
		    \frac{\rd}{\rd t}B^\ep(t)
		    =\int_{\Gamma\setminus S'_\ep}\left<X_\ep,\bar\nu_t\right>\mid_{\Psi^\ep_t(x)}{\rm J}^{{\rm reg}\Gamma}\Psi^\ep_t(x)\rd\mcH^{n-2}(x),
		\end{align*}
		and hence
		\begin{align*}
		    \frac{\rd}{\rd t}\mcF_\beta^\ep(t)
		    =\int_{M\setminus S'_\ep}H(t)\left<X_\ep,N_t\right>\mid_{\Psi^\ep_t(x)}{\rm J}^{{\rm reg}M}\Psi^\ep_t(x)\rd\mcH^{n-1}(x)
		    \\+\int_{\Gamma\setminus S'_\ep}\left<X_\ep,\mu_t-\beta\bar\nu_t\right>\mid_{\Psi^\ep_t(x)}{\rm J}^{{\rm reg}\Gamma}\Psi^\ep_t(x)\rd\mcH^{n-2}(x).
		\end{align*}
		Since $M\setminus S'_\ep$ and $\Gamma\setminus S'_\ep$ are smooth enough
		and ${\rm reg}M$ is of constant mean curvature by \cref{Prop-capillaryCMC}, we can further differentiate the above equation and evaluate at $t=0$ to obtain
		\begin{align}\label{F''beta0}
				\frac{\rd^2}{\rd t^2}\mid_{t=0}\mcF^\ep_\beta(t)
				&=\int_{M\setminus S'_\ep} H'(0)\left<X_\ep,\nu\right>\mid_x\rd\mcH^{n-1}(x)\notag\\
				&+H\frac{\rd}{\rdt}\mid_{t=0}\left(\int_{M\setminus S_\ep'}\left<X_\ep,\nu_t\right>\mid_{\Psi^\ep_t(x)}{\rm J}^{{\rm reg}M}\Psi^\ep_t(x)\rd\mcH^{n-1}(x)\right)\notag\\
				&+\int_{\Gamma\setminus S'_\ep}\left<\frac{\p}{\p t}\mid_{t=0}X_\ep\left(\Psi^\ep_t(x)\right),\mu-\beta\onu\mid_x\right>\rd\mcH^{n-2}(x)\\
				&+\int_{\Gamma\setminus S'_\ep} \left<X^\ep_\ep(x)
				,\left(\frac{\p}{\p t}\mid_{t=0}\left(\mu_t-\beta\onu_t\mid_{\Psi^\ep_t(x)}\right)\right)\right>\rd\mcH^{n-2}(x)\notag\\
				&+\int_{\Gamma\setminus S'_\ep} \left<X_\ep
				,\mu-\beta\onu\mid_x\right>\frac{\p}{\p t}\mid_{t=0}{\rm J}^{{\rm reg}\Gamma}\Psi^\ep_t(x)\rd\mcH^{n-2}(x).\notag
			\end{align}
			
			For the first term in \eqref{F''beta0}, by the evolution equation (c.f., \cite[(4.1b)]{Rosenberg93})
			\begin{align*}
				H'(0)=\De_M \zeta_\ep+\vert \vert h\vert\vert^2\zeta_\ep,
			\end{align*}
			we have
			\begin{align*}
				&\int_{M\setminus S'_\ep} H'(0)\left<X_\ep,\nu\right>\mid_x\rd\mcH^{n-1}(x)
				=-\int_{M\setminus S'_\ep} \left(\zeta_\ep\De_M f+\vert \vert h\vert\vert^2\zeta_\ep^2\right)\mid_x \rd\mcH^{n-1}(x).
			\end{align*}
			
			For the second term in \eqref{F''beta0}, notice that 
			\begin{align*}
				\frac{\rd}{\rd t}V(\Psi_t^\ep(E\setminus S'_\ep))
				&=\int_{\Psi_t^\ep(E\setminus S'_\ep)}{\rm div}X_\ep\mid_{y}\rd\mcH^n(y)
				=\int_{\Psi_t^\ep(M\setminus S'_\ep)}\left<X_\ep, \nu_t\right>\mid_y \rd\mcH^{n-1}(y)\\
				&=\int_{M\setminus S'_\ep}\left<X_\ep,\nu_t\right>\mid_{\Psi^\ep_t(x)}{\rm J}^{{\rm reg}M}\Psi^\ep_t(x)\rd\mcH^{n-1}(x)
			\end{align*}
			where we have used the fact that $\zeta_\ep\equiv0$ on $\p S'_\ep$ for the second equality, and the area formula in the last equality.
			In particular, this gives
			\begin{align*}
				H\frac{\rd^2}{\rd t^2}\mid_{t=0}V(\Psi_t^\ep(E\setminus S'_\ep))&=H\frac{\rd}{\rdt}\mid_{t=0}\left(\int_{M\setminus S'_\ep}\left<X_\ep,\nu_t\right>\mid_{\Psi^\ep_t(x)}{\rm J}^{{\rm reg}M}\Psi^\ep_t(x)\rd\mcH^{n-1}(x)\right).
			\end{align*}
			
			The fifth term is vanishing,
			due to the fact that
			$X_\ep(x)\in T_x\p\Om$ and observe that on $\Gamma\setminus S'_\ep$, by \cref{Prop-capillaryCMC}, there holds $$(\mu-\beta\onu)\mid_x\perp T_x\p\Om.$$
			Combining these equalities, we find	\begin{align*}%\label{newF''}
			\frac{\rd^2}{\rd t^2}\mid_{t=0}\mcF^\ep_\beta(t)
				=&-\int_{M\setminus S'_\ep} \left(\zeta_\ep\De_M \zeta_\ep+\vert \vert h\vert\vert^2\zeta_\ep^2\right) \rd\mcH^{n-1}(x)\notag\\
				&+H\frac{\rd^2}{\rd t^2}\mid_{t=0}V(\Psi_t^\ep(E\setminus S'_\ep))\notag\\
				&+\int_{\Gamma\setminus S'_\ep}\left<\frac{\p}{\p t}\mid_{t=0}X_\ep\left(\Psi^\ep_t(x)\right),\mu-\beta\onu\mid_x\right>\rd\mcH^{n-2}(x)\\
				&+\int_{\Gamma\setminus S'_\ep} \left<X_\ep(x)
				,\left(\frac{\p}{\p t}\mid_{t=0}\left(\mu_t-\beta\onu_t\mid_{\Psi^\ep_t(x)}\right)\right)\right>\rd\mcH^{n-2}(x)\notag.
			\end{align*}
		Taking also \cref{Lem-RS97} \eqref{4-0} into account,
		we conclude
		\begin{align*}
					\frac{\rd^2}{\rd t^2}\mid_{t=0}\mcF^\ep_\beta(t)
				=&-\int_{M\setminus S'_\ep} (\zeta_\ep\De_M \zeta_\ep+\vert \vert h\vert\vert^2\zeta^2)\rd\mcH^{n-1}(x)\notag\\
				&+H\frac{\rd^2}{\rd t^2}\mid_{t=0}V(\Psi_t^\ep(E\setminus S'_\ep))
				+\int_{\Gamma\setminus S'_\ep}\left(\zeta_\ep\frac{\p \zeta_\ep}{\p\mu}-q\zeta_\ep^2\right)\rd\mcH^{n-2}.
			\end{align*}
Back to \eqref{condi-stable}, we thus obtain
			\begin{align*}
  & -\int_{M} (\zeta_\ep\De_M \zeta_\ep+\vert \vert h\vert\vert^2\zeta_\ep^2)\rd\mcH^{n-1}(x)
				+\int_{\Gamma}\left(\zeta_\ep\frac{\p \zeta_\ep}{\p\mu}-q\zeta_\ep^2\right)\rd\mcH^{n-2}
    \\=&
			    -\int_{M\setminus S'_\ep} (\zeta_\ep\De_M \zeta_\ep+\vert \vert h\vert\vert^2\zeta_\ep^2)\rd\mcH^{n-1}(x)
				+\int_{\Gamma\setminus S'_\ep}\left(\zeta_\ep\frac{\p \zeta_\ep}{\p\mu}-q\zeta_\ep^2\right)\rd\mcH^{n-2}\geq0.
			\end{align*}
			Finally, by virtue of the fact that $\left(\zeta_\ep\Delta_M\zeta_\ep+\vert\vert h\vert\vert^2\zeta_\ep^2\right)\ra\left(\zeta\Delta_M\zeta+\vert\vert h\vert\vert^2\zeta^2\right)$ in $L^1(M)$ and $\left(\zeta_\ep\left<\nabla^M\zeta_\ep, \mu\right>-q\zeta_\ep^2\right)\ra\left(\left<\zeta\nabla^M\zeta, \mu\right>-q\zeta^2\right)$ in $L^1(\Gamma)$ as $\ep\searrow0$, we may send $\ep\searrow0$ to conclude that \eqref{ineq-Poincare} holds.
			
		%-------------
	\end{proof}

We end this section with the following remark.
\begin{remark}\label{rmk-local-min}
\normalfont
A local minimizer of the free energy functional under volume constraint, is clearly a stationary and stable set in the sense of \cref{stationaryandstable}. 
By virtue of \cref{regularity-thm} and \cref{eucl-vol-growth}, we know that 
a local minimizer satisfies all the conditions in \cref{Prop-capillaryCMC} and \cref{Prop-StableSet}.
\end{remark} 
	
	%---------------------------

	\section{Volume-constraint local minimizers in a wedge-shaped domain}\label{Sec-5}

In this section, we prove \cref{umbillic} for the case $\O=\mbR^n_+$. In fact, we shall study a more general setting that $\O$ is a wedge-shaped domain.

We first clarify the terminologies regarding the so-called wedge-shaped domain.
	Let $\mfW$ be an unbounded domain in $\mbR^{n}$($n\geq3$), which is determined by a finite family of mutually intersecting hyperplanes $P_1,\ldots, P_L$, for some integer $L\geq1$.
	Up to a translation, we may assume that the origin $O\in\mbR^{n}$ is in the intersection $\bigcap_{i=1}^LP_i$.
	We denote by $\p\mfW$ its boundary.
	Let $\mfn_1,\ldots,\mfn_L$ be the exterior unit normal to $P_i$ in $\mfW$. We call such $\mfW$ a \textit{wedge-shaped domain} (in the literature \cite{LX17,Souam21}, such domains are called \textit{domains with planar boundaries}) when $\mfW$ satisfies that: $\{\mfn_{1},\ldots,\mfn_{L}\}$ are linearly independent. In the special case $L=1$, $W$ is a half-space.
	
	Let $E\subset\mfW$ be a set of finite perimeter, for simplicity we assume $P(E;P_i)>0$ for $i=1,\ldots,L$.
    Let $M=\overline{\p E\cap\mfW}$.
	In all follows, we assume that $M$ is disjoint from the edges $P_i\cap P_j$ for any $(i\neq j)$ and $\overline{E}$ is away from the edges.
	%We emphasize that this is a crucial point for the rigidity result to hold.
Let $B_i^+$ denote the set $(\p E\setminus M)\cap P_i$, which is relatively open in $\p\mfW$ and smooth; let $\Gamma_i$ denote the closed set $M\cap P_i$.
%equivalently, $\Gamma_i=\p_{\p E} M\cap P_i=\p_{\p E} B_i^+$.
Let $\nu,\oN_i$ denote the outwards pointing unit normals of $M, B_i^+$, respectively, when they exist; $\mu_i,\onu_i$ denote the exterior unit conormals of $\Gamma_i$ in $M,B_i^+$, respectively. See \cref{figure2} for illustration.
	
In this situation, the free energy functional $\mcF_{L}(E;\mfW)$ is given by
\begin{align}\label{FreeEnergyFunctional-Poly1}
	\mcF_L(E;\mfW)=P(E;\mfW)-\sum_{i=1}^L\beta_iP(E;P_i),
\end{align} 
where for each $i$, $\beta_i\in(-1,1)$ is a prescribed constant determining the contact angles.
Let $\mfk$ be a constant vector defined by
\begin{align}\label{mfk}
 \mfk=\sum_{i=1}^Lc_i\mfn_i,
\end{align}
and the constants $c_i$ are such that $\left<\mfk,\mfn_i\right>=\beta_i$.
We refer the interested readers to \cite[(1.7)]{JWXZ22} for the geometric meaning of $\mfk$. 
	
	%------------
	\begin{figure}[h]
	\centering
	\includegraphics[height=9cm,width=15cm]{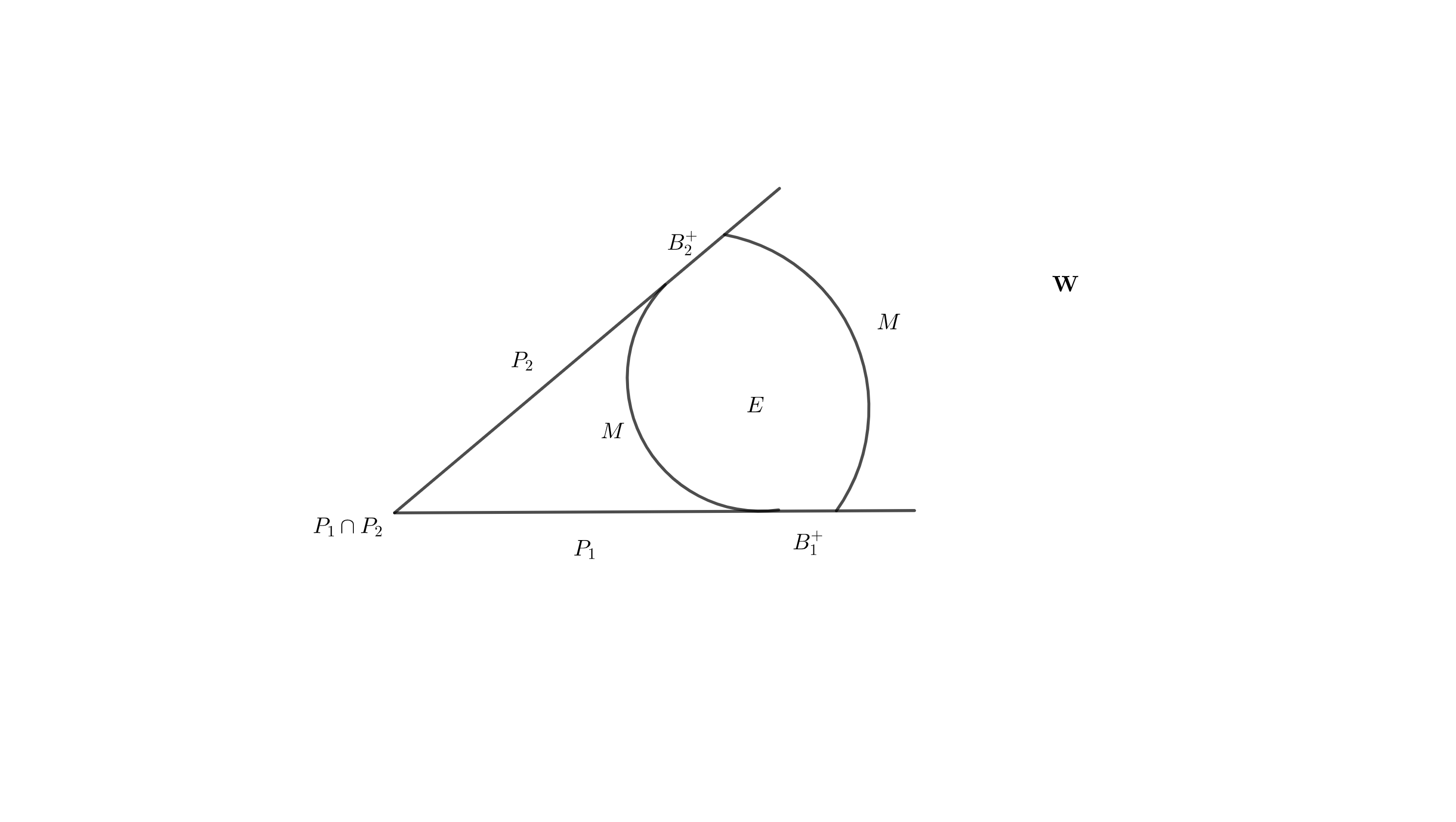}
	\caption{Notations for wedge}
	\label{figure2}
\end{figure}
	%------------

 	In this section, we extend the rigidity results for  smooth stable capillary hypersurfaces in a wedge-shaped domain and in a half-space to the singular setting, in light of the arguments in \cite{LX17,Souam21}. 
	\begin{theorem}\label{Thm-wedgeShaped}
		Let $\mfW\subset\mbR^n$ be a wedge-shaped domain with planar boundaries $P_1,\ldots, P_L$. 
		Let $E\subset\mfW$ be a set of finite perimeter
		whose closure is away from the edges of $\mfW$, which is a local minimizer of the free energy functional \eqref{FreeEnergyFunctional-Poly1} under volume constraint among sets of finite perimeter. 
		Assume $\vert\mfk\vert\leq1$. Then $M=\overline{\p E\cap\mfW}$ is part of a sphere that intersects $P_i$ at the contact angle $\arccos\beta_i$.
		
		In particular, the conclusion is true for $\mfW=\mathbf
		{R}^n_+$.
	\end{theorem}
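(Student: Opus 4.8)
The plan is to run, in the singular setting, the stability argument of Wang--Xia \cite{WX19} and Souam \cite{Souam21}, feeding the Poincar\'e-type inequality of \cref{Prop-StableSet} with a test function supplied by a Minkowski-type identity. First I would record that $E$, being a local minimizer, is stationary and stable and meets all the standing hypotheses: by \cref{rmk-local-min} (together with \cref{regularity-thm} and \cref{eucl-vol-growth}) one has $\mcH^{n-3}({\rm sing}M)=0$, $\|h\|\in L^2(M)$, $H\in L^1(M)$, and the Euclidean volume growth conditions. Since $\overline E$ is away from the edges $P_i\cap P_j$, near $M$ the boundary $\p\mfW$ is the smooth union of the hyperplanes $P_i$, so \cref{Prop-capillaryCMC} applies wall by wall: ${\rm reg}M$ has constant mean curvature $H$ and meets each $P_i$ along ${\rm reg}\Gamma_i$ at the constant angle $\theta_i=\arccos\beta_i$, i.e. $\langle\nu,\oN_i\rangle=-\beta_i=-\langle\mu_i,\onu_i\rangle$. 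Likewise \cref{Prop-StableSet} holds with the boundary integral taken over $\Gamma=\bigcup_i\Gamma_i$ and $q$ evaluated wall by wall; as the $P_i$ are planar, $h^{\p\mfW}\equiv0$ and $q_i=\cot\theta_i\,h(\mu_i,\mu_i)$.

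The crucial step is the choice of test function. Applying the tangential divergence theorem \cref{Lem-divergenctheorem} to $x^T$ and to $\mfk^T$, and using the constant-angle relations together with $\langle x,\mfn_i\rangle=0$ on the planes $P_i$ through the vertex, one obtains the Minkowski-type formula (\cref{PropMinkowski-Poly}), which up to the orientation convention for $\mfn_i$ reads
\begin{align*}
\int_M\left[(n-1)\left(1+\langle\mfk,\nu\rangle\right)-H\langle x,\nu\rangle\right]\rd\mcH^{n-1}=0.
\end{align*}
I would take $\zeta:=(n-1)(1+\langle\mfk,\nu\rangle)-H\langle x,\nu\rangle$, which by the displayed identity automatically has zero mean on $M$. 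Because $M$ is bounded and $\|h\|\in L^2(M)$, the functions $\langle\mfk,\nu\rangle$, $\langle x,\nu\rangle$ and their tangential Laplacians (linear in $h$, $\nabla H$ and $\|h\|^2$) lie in the correct Lebesgue spaces, so $\zeta$ satisfies the integrability conditions \eqref{condi-integrability}; this is precisely the observation that makes \cref{Prop-StableSet} applicable.

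Finally I would insert $\zeta$ into \eqref{ineq-Poincare}. The CMC Jacobi identities $L(1)=\|h\|^2$, $L\langle\mfk,\nu\rangle=0$ and $L\langle x,\nu\rangle=H$ for the stability operator $L=\Delta_M+\|h\|^2$ give the clean pointwise identity
\begin{align*}
L\zeta=(n-1)\|h\|^2-H^2=(n-1)\left(\|h\|^2-\tfrac{H^2}{n-1}\right)\ge0,
\end{align*}
with equality exactly at umbilic points. Using \cref{Lem-RS97} and the constant-angle conditions to see that $\zeta$ obeys the natural (Robin) boundary condition $\p_\mu\zeta=q\zeta$ on ${\rm reg}\Gamma$, the boundary term in $J(\zeta)$ cancels and $J(\zeta)=-\int_M\zeta\,L\zeta\,\rd\mcH^{n-1}$. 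Playing the stability inequality $J(\zeta)\ge0$ against the pointwise sign of $L\zeta$ then forces $\|h\|^2\equiv H^2/(n-1)$, i.e. ${\rm reg}M$ is totally umbilic. Since $H$ is constant, each component of ${\rm reg}M$ is an open piece of a sphere (a hyperplane in the degenerate case $H=0$); because $M=\overline{{\rm reg}M}$ and a spherical cap carries no singular points, ${\rm sing}M=\varnothing$ and $M$ is the asserted spherical cap meeting each $P_i$ at angle $\arccos\beta_i$. The half-space is the case $L=1$, where $|\mfk|=|\beta|<1$ holds automatically.

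I expect the main obstacle to be the final sign analysis: verifying that $\zeta$ satisfies the boundary condition so that the boundary term in $J(\zeta)$ drops, and then arranging the stability inequality against $L\zeta\ge0$ so that umbilicity is genuinely forced rather than surviving only as an integral constraint. The hypothesis $|\mfk|\le1$ enters precisely here, guaranteeing that the spherical-cap configuration is geometrically admissible and fixing the sign in the second-variation computation; tracking all boundary contributions across the several walls $\Gamma_i$ simultaneously, while staying within the integrability afforded by $\|h\|\in L^2(M)$ and $\mcH^{n-3}({\rm sing}M)=0$, is the most delicate bookkeeping.
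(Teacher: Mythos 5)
Your setup, choice of test function, and the identities $L\zeta=(n-1)\|h\|^2-H^2\ge 0$ and $\partial_\mu\zeta=q_i\zeta$ on ${\rm reg}\Gamma_i$ all match the paper. But the final step, where you "play $J(\zeta)\ge 0$ against the pointwise sign of $L\zeta$", is exactly where the argument has a genuine gap that you flag but do not close. The stability inequality gives $-\int_M \zeta\, L\zeta\,\rd\mcH^{n-1}\ge 0$ with $L\zeta\ge0$ pointwise, but $\zeta=(n-1)(1+\langle\mfk,\nu\rangle)-H\langle x,\nu\rangle$ has \emph{zero mean} and hence changes sign (unless it already vanishes), so the product $\zeta\,L\zeta$ has no definite sign and umbilicity does not follow. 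The hypothesis $|\mfk|\le1$ only controls the sign of the $(n-1)(1+\langle\mfk,\nu\rangle)$ part of $\zeta$; it says nothing about $-H\langle x,\nu\rangle$. The paper's essential extra ingredient, absent from your proposal, is the auxiliary identity
\begin{align*}
\int_{M}\left\{(n-1)\|h\|^2-H^2\right\}H\langle x,\nu\rangle\,\rd\mcH^{n-1}=0,
\end{align*}
proved by integrating $\Delta_M\Phi$ for $\Phi=\frac{H}{n-1}|x|^2-2\langle x,\nu\rangle$ (whose Laplacian is $2(\|h\|^2-\frac{H^2}{n-1})\langle x,\nu\rangle$), computing the boundary terms $\langle\nabla^M\Phi,\mu_i\rangle$ via the mean curvature of $\Gamma_i$ in $P_i$, and cancelling them using the balancing formula $H\mcH^{n-1}(B_i^+)=\sin\theta_i\,\mcH^{n-2}(\Gamma_i)$ together with the projection of the Minkowski identity onto $\mfk$. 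Only after subtracting this does the stability inequality reduce to $-\int_M (L\zeta)(n-1)(1+\langle\mfk,\nu\rangle)\,\rd\mcH^{n-1}\ge0$ with both factors nonnegative (here $|\mfk|\le1$ enters), forcing $(n-1)\|h\|^2\equiv H^2$.

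A second, smaller gap: from local umbilicity you jump to "${\rm sing}M=\varnothing$ and $M$ is a single cap". A priori $M$ could be a union of several spherical pieces. The paper excludes non-tangentially intersecting caps using $\mcH^{n-3}({\rm sing}M)=0$, and excludes disjoint or tangent caps (and closed spheres) by building a sign-alternating test function from first Robin eigenfunctions (with $\lambda_1<0$) on the components, contradicting the Poincar\'e-type inequality. Also note that in the wedge case the balancing formula already rules out $H=0$, so the "degenerate hyperplane case" you mention does not occur here.
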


	Let us begin by noticing that with slight modifications, one can recover the proof of \cref{Prop-capillaryCMC} and \cref{Prop-StableSet} for stationary and stable sets (in the sense of \cref{stationaryandstable}) in the wedge-shaped domain.
	Here we state them without proof.
	%--------------------
	\begin{proposition}\label{Prop-capillaryCMC-W}
	    	Let $E\subset\mfW$ be a set of finite perimeter whose closure is away from the edges of $\mfW$, which is stationary for $\mcF_L$ under volume constraint. Assume that $M$ and $\Gamma_i$ satisfy the local Euclidean volume growth condition \eqref{ineq-vg-M} and \eqref{ineq-vg-Gamma} respectively. Assume in addition that  $\mcH^{n-2}({\rm sing}M)=0$
    and $H\in L^1(M).$
		Then $E$ satisfies
		\begin{enumerate}[label=\roman*., itemsep=0pt, topsep=0pt]
			\item (CMC) On ${\rm reg}M$, the mean curvature of $M$ is constant, denoted by $H$,
			
			\item (Young's law) On ${\rm reg}M\cap P_i$, the measure-theoretic hypersurface $M$ intersects $\p\Om$ with a constant contact angle $\theta_i$ ($\cos\theta_i=\beta_i$), i.e.,
			\begin{align}\label{condi-Young-W}
				\left<\nu,\oN_i\right>=-\cos\theta_i=-\left<\mu,\onu_i\right>.
			\end{align} 
		\end{enumerate}
	\end{proposition}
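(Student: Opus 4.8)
The plan is to adapt, essentially verbatim, the three-step argument used for \cref{Prop-capillaryCMC} to the present multi-faceted boundary; the only genuinely new ingredient is the localization of test vector fields to a single face $P_i$. Since the standing hypotheses $\mcH^{n-2}({\rm sing}M)=0$, the Euclidean volume growth of $M$ and of each $\Gamma_i$, and $H\in L^1(M)$ are exactly those required by \cref{Lem-divergenctheorem}, the singular tangential divergence formulas \eqref{formu-div-M} and \eqref{formu-div-B+} are available on $M$ and on each face $B_i^+$.

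First I would construct the fixed-volume admissible family. Given any $X\in C^\infty_c(\mbR^n;\mbR^n)$ with $\int_M\langle X,\nu\rangle\,\rd\mcH^{n-1}=0$ and $X(x)\in T_xP_i$ whenever $x\in P_i$, the induced flow $\Psi_t$ maps $\mfW$ into itself (tangency along every face keeps $\p\mfW$ invariant, and since $\overline{E}$ is away from the edges the flowed set does not reach any edge $P_i\cap P_j$ for small $t$), so $E_t:=\Psi_t(E)\subset\mfW$. Exactly as in Step~1 of \cref{Prop-capillaryCMC}, I would correct the second-order volume defect by a compactly supported graphical modification inside $\mfW$ near a point of ${\rm reg}M\cap\mfW$, producing an admissible family $\{\tilde E_t\}$ with $|\tilde E_t|\equiv|E|$ and $\tilde{\mcF}_L'(0)=\mcF_L'(0)$.

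Second, I would compute the first variation. Writing $P(E_t;U)=P(E_t;\mfW)+\sum_{i}P(E_t;P_i)$ for any open $U\supset\mfW$ and applying the first variation of perimeter together with \eqref{formu-div-M} and \eqref{formu-div-B+} on each face, the stationarity $\mcF_L'(0)=0$ becomes
\begin{align*}
\int_{M\cap\mfW}H\langle X,\nu\rangle\,\rd\mcH^{n-1}+\sum_{i=1}^{L}\int_{\Gamma_i}\langle X,\mu_i-\beta_i\onu_i\rangle\,\rd\mcH^{n-2}=0,
\end{align*}
valid for every admissible $X$ as above. Testing first with $X\in C^\infty_c(\mfW;\mbR^n)$ (automatically tangent to every face, its support avoiding $\p\mfW$) subject to $\int_M\langle X,\nu\rangle=0$ removes all boundary contributions and leaves $\int_{M\cap\mfW}H\langle X,\nu\rangle\,\rd\mcH^{n-1}=0$; the fundamental lemma of the calculus of variations then forces $H$ to be constant on ${\rm reg}M\cap\mfW$, which is (i).

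Third, feeding CMC back, the boundary sum alone vanishes for all admissible $X$, and I would extract the contact angles face by face. Fix $i$: because $\overline{E}$ is away from the edges, each $\Gamma_i$ is compactly contained in the relative interior of $P_i$ and the various $\Gamma_i$ are mutually disjoint, so I may choose $X_0\in C^\infty_c(\mbR^n;\mbR^n)$ tangent to $P_i$ and supported in a neighborhood of $P_i$ meeting no other face or edge. After correcting $X_0$ by an interior field to restore the volume constraint (as in \cref{Prop-capillaryCMC}), I obtain $\int_{\Gamma_i}\langle X_0,\mu_i-\beta_i\onu_i\rangle\,\rd\mcH^{n-2}=0$ for all such $X_0$; decomposing $\langle X_0,\mu_i\rangle=\langle\mu_i,\onu_i\rangle\langle X_0,\onu_i\rangle$ along $\Gamma_i$ and invoking the fundamental lemma yields $\langle\mu_i,\onu_i\rangle=\beta_i=\cos\theta_i$, equivalently $\langle\nu,\oN_i\rangle=-\cos\theta_i$, on ${\rm reg}\Gamma_i$, which is (ii). The main obstacle is the bookkeeping of this face-by-face localization: one must verify that the admissible test fields really can be taken tangent to a single $P_i$, supported away from the remaining faces and edges, and that the volume-correction step does not reintroduce contributions on the other faces. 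This is precisely where the assumption that $\overline{E}$ is away from the edges is essential, since it decouples the faces and renders each contact-angle condition an independent one-face problem identical to the half-space case of \cref{Prop-capillaryCMC}.
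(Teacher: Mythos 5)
Your proposal is correct and is essentially the argument the paper intends: the paper states \cref{Prop-capillaryCMC-W} without proof, remarking only that the proof of \cref{Prop-capillaryCMC} carries over with slight modifications, and your three-step adaptation (flow by fields tangent to each face, volume correction by an interior graphical perturbation, then face-by-face localization of test fields using that $\overline{E}$ avoids the edges so each $\Gamma_i$ sits in the relative interior of $P_i$) is exactly the modification required. The decomposition $\left<X_0,\mu_i\right>=\left<\mu_i,\onu_i\right>\left<X_0,\onu_i\right>$ and the correction $X=S_0+sX_0$ match the paper's treatment of the single-face case verbatim.
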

	
	\begin{proposition}\label{Prop-StableSet-W}
		
	    	Let $E\subset\mfW$ be a set of finite perimeter whose closure is away from the edges of $\mfW$, which is stable for $\mcF_L$ under volume constraint.
	    	Assume that $M$ and $\Gamma_i$ satisfy the local Euclidean volume growth condition \eqref{ineq-vg-M} and \eqref{ineq-vg-Gamma} respectively. Assume in addition that  $\mcH^{n-3}({\rm sing}M)=0$
    and $H\in L^1(M).$
		Then for any $C^2$-function $\zeta: {\rm reg}M\ra\mbR$ satisfying the integrability conditions
		\begin{align}\label{condi-integrability-W}
		    \zeta\in L^2(M)\cap L^2(\Gamma_i),\quad
		    (\zeta\De_M\zeta+\vert\vert h\vert\vert^2\zeta^2)\in L^1(M),\quad
		    \left(\left<\zeta\nabla^M\zeta, \mu_i\right>-q_i\zeta^2\right)\in L^1(\Gamma),
		\end{align}
		%$\zeta\in L^2(M)\cap L^2(\Gamma)$, $\zeta\vert\nabla^M\zeta\vert\in L^1(M)\cap L^1(\Gamma_i)$, $(\De_M\zeta+\vert\vert h\vert\vert^2\zeta^2)\in L^1(M)$,
		%$\left(\left<\zeta\nabla^M\zeta, \mu_i\right>-q_i\zeta^2\right)\in L^1(\Gamma_i)$,
		with
		\begin{align*}
			\int_{{\rm reg}M}\zeta(x)\rd\mcH^{n-1}(x)=0,
		\end{align*}
		the following Poincar\'e-type inequality holds:
		%\begin{align}\label{result1-wedge}
			%J(\zeta):=\int_{M\cap\mfW}\left(\vert\nabla^M\zeta\vert^2
			%-\vert\vert h\vert\vert^2\zeta^2\right)\rd\mcH^{n-1}(x)
			%-\sum_{i=1}^K\int_{\Gamma_i}q_i\zeta^2\rd\mcH^{n-2}\geq0,
		%\end{align}
		\begin{align}\label{result2-wedge}
			-\int_{M\cap\mfW}\left(\zeta\Delta_M\zeta+\vert\vert h\vert\vert^2\zeta^2\right)\rd\mcH^{n-1}(x)
			+\sum_{i=1}^L\int_{\Gamma_i}\left(\zeta\left<\nabla^M\zeta, \mu_i\right>-q_i\zeta^2\right)\rd\mcH^{n-2}\geq0,
		\end{align}
		Here $\nabla^M,\De_M$ denote the tangential gradient and tangential Laplacian with respect to $M$, and
		\footnote{Notice that the boundaries of $\mfW$ are planar, thus $h^{P_i}\equiv0$.}
		\begin{align}\label{q-wedge}
			q_i=\cot\theta_i h(\mu_i,\mu_i).
		\end{align}
\end{proposition}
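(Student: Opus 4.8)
The plan is to run the proof of \cref{Prop-StableSet} essentially verbatim, carrying the single boundary datum through to a sum over the faces $P_1,\dots,P_L$. The decisive structural fact is that, since $\overline{E}$ (hence $M$) is assumed away from the edges $P_i\cap P_j$, the boundary portions $\Gamma_1,\dots,\Gamma_L$ are mutually disjoint and each $\Gamma_i$ possesses a neighborhood in $\mbR^n$ meeting $\p\mfW$ only in $P_i$. This decouples the faces, so that every local construction attached to a contact angle can be carried out one face at a time.

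First I would fix a $C^2$-function $\zeta:{\rm reg}M\ra\mbR$ satisfying the integrability conditions \eqref{condi-integrability-W} together with $\int_{{\rm reg}M}\zeta\,\rd\mcH^{n-1}=0$. Since $M$ and each $\Gamma_i$ satisfy the Euclidean volume growth conditions \eqref{ineq-vg-M}, \eqref{ineq-vg-Gamma} and $\mcH^{n-3}({\rm sing}M)=0$, \cref{Lem-Cut-off} applies with $q=2$, producing cut-offs $\varphi_\ep$ and sets $S_\ep'\subset S_\ep$. Extending $\zeta$ to $\mbR^n$ and setting $\tilde\zeta_\ep:=\varphi_\ep\zeta$, the same integration-by-parts identities as in \cref{Prop-StableSet} give
\begin{align*}
\left(\tilde\zeta_\ep\De_M\tilde\zeta_\ep+\vert\vert h\vert\vert^2\tilde\zeta_\ep^2\right)\ra\left(\zeta\De_M\zeta+\vert\vert h\vert\vert^2\zeta^2\right)\quad\text{in }L^1(M),
\end{align*}
and, on each face separately, the analogous $L^1(\Gamma_i)$-convergence of the boundary integrands, using \eqref{condi-integrability-W}. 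A correction term $\eta_\ep$ then restores $\int_M\zeta_\ep\,\rd\mcH^{n-1}=0$ with $\zeta_\ep\equiv0$ on $S_\ep'$, exactly as before.

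Next I would build the admissible variation. Because the $\Gamma_i$ are separated and each meets only $P_i$, I would define, in a neighborhood of $\Gamma_i$, approximate unit fields $\nu_\ep$ and $\mu_\ep^{(i)}$ (with $\mu_\ep^{(i)}=\mu_i$ on $({\rm reg}M\setminus S_\ep')\cap P_i$) and set
\begin{align*}
X_\ep:=-\zeta_\ep\left(\cot\theta_i\,\mu_\ep^{(i)}+\nu_\ep\right)\quad\text{near }\Gamma_i,
\end{align*}
gluing these by a partition of unity and extending by $-\zeta_\ep\nu_\ep$ into the interior. Young's law from \cref{Prop-capillaryCMC-W}, namely $\cot\theta_i\,\mu_i+\nu\in T_xP_i$ at each $x\in{\rm reg}M\cap P_i$, guarantees $X_\ep(x)\in T_x P_i$ for $x\in P_i$; and $\left<X_\ep,-\nu\right>=\zeta_\ep$ on $M$ together with the mean-zero property yields $\int_M\left<X_\ep,-\nu\right>\rd\mcH^{n-1}=0$. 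The graph-modification of \cref{Prop-capillaryCMC-W} then produces a volume-preserving admissible family $\{\tilde E_t^\ep\}$.

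Finally, working on the smooth piece $M\setminus S_\ep'$ I would differentiate the energy twice, invoking the constancy of $H$ from \cref{Prop-capillaryCMC-W} and applying \cref{Lem-RS97} face by face. Since the faces are planar, $h^{P_i}\equiv0$, so item \eqref{4-0} of \cref{Lem-RS97} on $\Gamma_i$ returns $\zeta_\ep\frac{\p\zeta_\ep}{\p\mu_i}-q_i\zeta_\ep^2$ with $q_i=\cot\theta_i\,h(\mu_i,\mu_i)$ as in \eqref{q-wedge}, and the boundary contributions accumulate as $\sum_{i=1}^L\int_{\Gamma_i}(\cdots)$. The stability inequality $\frac{\rd^2}{\rd t^2}\mid_{t=0}\mcF_L(\tilde E_t^\ep;\mfW)-H\frac{\rd^2}{\rd t^2}\mid_{t=0}V\geq0$, together with the vanishing of the $(\mu_i-\beta_i\onu_i)$-term (by the tangency $X_\ep\in T_xP_i$ and the orthogonality $(\mu_i-\beta_i\onu_i)\perp T_xP_i$ from Young's law), reduces the second variation to the functional appearing in \eqref{result2-wedge} with $\zeta_\ep$ in place of $\zeta$; sending $\ep\searrow0$ through the $L^1$-convergences yields \eqref{result2-wedge}. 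The one point requiring real care is the simultaneous tangency of $X_\ep$ to all faces: this is precisely where the hypothesis that $\overline{E}$ avoids the edges is indispensable, since it is what allows the per-face constructions to be glued without conflict and the interior/boundary decomposition to be applied face-by-face.
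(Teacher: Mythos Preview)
Your proposal is correct and follows exactly the approach the paper indicates: the authors state \cref{Prop-StableSet-W} without proof, noting only that ``with slight modifications, one can recover the proof of \cref{Prop-capillaryCMC} and \cref{Prop-StableSet}'' in the wedge setting. Your write-up supplies precisely those modifications---the per-face construction of the tangent variation field, the use of the away-from-edges hypothesis to decouple the $\Gamma_i$, and the observation that $h^{P_i}\equiv0$ reduces $q_i$ to \eqref{q-wedge}---and is faithful to the argument of \cref{Prop-StableSet}.
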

	%--------------------

	Exploiting \cref{PropAS16-2.4}, we obtain the following Minkowski-type formula for singular hypersurfaces. The smooth case has been derived in \cite[Lemma 5]{LX17}.
	\begin{proposition}[Minkowski-type formula in a Wedge-shaped domain]\label{PropMinkowski-Poly}
		Let $E\subset\mfW$ be a set of finite perimeter whose closure is away from the edges of $\mfW$, which is stationary for $\mcF_L$ under volume constraint. Assume that $M$ and $\Gamma_i$ satisfy the local Euclidean volume growth condition \eqref{ineq-vg-M} and \eqref{ineq-vg-Gamma} respectively. Assume in addition that  $\mcH^{n-2}({\rm sing}M)=0$
    and $H\in L^1(M)$,
		%If ${\rm reg}M$ hits each $P_i$ with a fixed contact angle $\theta_i$ and $H\in L^1(M)$,
        then there holds
		\begin{align}\label{Minkowski-type-formula}
			\int_M\left\{(n-1)-H\left<x,\nu\right>+(n-1)\left<\nu,\mfk\right>\right\}\rd\mcH^{n-1}=0.
		\end{align}

		%{\color{purple}If in addition, $\mcH^{n-3}({\rm sing}M)=0$ and $H_{\Gamma_i}\in L^1(\Gamma_i)$, then there holds
		%\begin{align}\label{formu-Minko-Gamma}
		    %\int_{\Gamma_i}\left\{(n-2)-H_{\Gamma_i}\left<x,\bar\mu_i\right>\right\}\rd\mcH^{n-2}=0,
		%\end{align}
		%where $H_{\Gamma_i}$ denotes the mean curvature of ${\rm reg}\Gamma_i$ in $P_i$ with respect to the inner unit conormal $-\bar\mu_i$.}
	\end{proposition}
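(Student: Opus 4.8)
The plan is to derive \eqref{Minkowski-type-formula} by feeding the position vector field $x$ into the two singular integral identities already at our disposal---the tangential divergence theorem \cref{Lem-divergenctheorem} and the vector identity \cref{PropAS16-2.4}---and then absorbing every boundary contribution using Young's law from \cref{Prop-capillaryCMC-W} together with the fact that each wall $P_i$ passes through the origin. Throughout I use the wedge forms of \cref{Lem-divergenctheorem} and \cref{PropAS16-2.4}, in which the single boundary integral over $\Gamma$ is replaced by the sum $\sum_{i=1}^{L}$ of the integrals over the faces $\Gamma_i$ with conormals $\mu_i$; these hold verbatim, since the given proofs only use the cut-off functions of \cref{Lem-Cut-off} on $M$ and never the structure of $\p\mfW$.

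First I would take $X(x)=x$ in \cref{Lem-divergenctheorem}. Since $\mathrm{div}_M x=n-1$ on ${\rm reg}M$, and since $\lvert x\rvert$ is bounded on the bounded set $M$ while $H\in L^1(M)$, formula \eqref{formu-div-M} gives
\begin{align}\label{Mink-step1}
\int_M\left\{(n-1)-H\langle x,\nu\rangle\right\}\rd\mcH^{n-1}=\sum_{i=1}^{L}\int_{\Gamma_i}\langle x,\mu_i\rangle\rd\mcH^{n-2}.
\end{align}
It therefore suffices to show that the right-hand side equals $-(n-1)\int_M\langle\nu,\mfk\rangle\,\rd\mcH^{n-1}$. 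To produce the factor $\mfk$, I would apply the wedge form of \cref{PropAS16-2.4} with the constant vector $\mfn_j$, multiply by $c_j$, and sum over $j$; recalling $\mfk=\sum_j c_j\mfn_j$ from \eqref{mfk} this yields
\begin{align}\label{Mink-step2}
(n-1)\int_M\langle\nu,\mfk\rangle\rd\mcH^{n-1}=\sum_{i=1}^{L}\int_{\Gamma_i}\left\{\langle x,\mu_i\rangle\langle\nu,\mfk\rangle-\langle x,\nu\rangle\langle\mu_i,\mfk\rangle\right\}\rd\mcH^{n-2}.
\end{align}

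The crux---and the step I expect to be most delicate---is the pointwise identity on each face
\begin{align}\label{Mink-key}
\langle x,\mu_i\rangle\langle\nu,\mfk\rangle-\langle x,\nu\rangle\langle\mu_i,\mfk\rangle=-\langle x,\mu_i\rangle\qquad\text{on }{\rm reg}\,\Gamma_i,
\end{align}
which turns the right side of \eqref{Mink-step2} into precisely $-\sum_i\int_{\Gamma_i}\langle x,\mu_i\rangle$. To prove \eqref{Mink-key} I would work in the $2$-plane normal to $T_x\Gamma_i$, which carries the two orthonormal frames $\{\nu,\mu_i\}$ and $\{\mfn_i,\onu_i\}$ (recall $\oN_i=\mfn_i$). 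Young's law \eqref{condi-Young-W} fixes the relative position of these frames, namely $\langle\nu,\mfn_i\rangle=-\cos\theta_i$ and $\langle\mu_i,\onu_i\rangle=\cos\theta_i$, so that $\nu$ and $\mu_i$ are expressed through $\mfn_i,\onu_i$ with coefficients $\pm\cos\theta_i,\pm\sin\theta_i$. Since each $P_i$ contains the origin, one has $\langle x,\mfn_i\rangle=0$ on $\Gamma_i$, hence the normal component of $x$ is a multiple of $\onu_i$ and $\langle x,\nu\rangle,\langle x,\mu_i\rangle$ become proportional with ratio $\tan\theta_i$. Writing $\langle\nu,\mfk\rangle$ and $\langle\mu_i,\mfk\rangle$ in the $\{\mfn_i,\onu_i\}$-frame, the component $\langle\mfk,\mfn_i\rangle=\beta_i=\cos\theta_i$ is known, while the a priori unknown component $\langle\mfk,\onu_i\rangle$ multiplies terms that cancel in \eqref{Mink-key}; after substituting $\beta_i=\cos\theta_i$ the surviving terms collapse to $-\langle x,\mu_i\rangle$ by $\cos^2\theta_i+\sin^2\theta_i=1$. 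The cancellation of the $\langle\mfk,\onu_i\rangle$-terms is the key algebraic point: it makes the identity independent of the tangential behaviour of $\mfk$ along the wall, and it is also why only $|\mfk|\le 1$ (not the individual $c_i$) enters the hypotheses.

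Finally I would assemble the pieces. Inserting \eqref{Mink-key} into \eqref{Mink-step2} gives $(n-1)\int_M\langle\nu,\mfk\rangle=-\sum_i\int_{\Gamma_i}\langle x,\mu_i\rangle$, and substituting this into \eqref{Mink-step1} yields $\int_M\{(n-1)-H\langle x,\nu\rangle\}=-(n-1)\int_M\langle\nu,\mfk\rangle$, which rearranges to \eqref{Minkowski-type-formula}. I should also record that every boundary integral is finite: $\lvert x\rvert$ is bounded and $\mcH^{n-2}(\Gamma_i)<\infty$ by the Euclidean volume growth hypothesis, so no integrability assumption beyond those in the statement is required, and the singular set contributes nothing to \eqref{formu-div-M} or \eqref{AS16-(2.3)} because $\mcH^{n-2}({\rm sing}M)=0$.
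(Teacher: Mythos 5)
Your proposal is correct and follows essentially the same route as the paper: the identity \eqref{Mink-step1} is the paper's \eqref{AS16-5.1-Wedge} (obtained there by the same cut-off/tangential-divergence argument, applied to $x^T$), and pairing \cref{PropAS16-2.4} with $\mfk$ together with the pointwise cancellation \eqref{Mink-key} is exactly the computation in \eqref{AS16-5.2-Wedge}, using Young's law, $\left<x,\mfn_i\right>=0$, and $\left<\mfk,\mfn_i\right>=\cos\theta_i$. The only cosmetic difference is that you invoke \cref{Lem-divergenctheorem} directly with the globally smooth field $X=x$ rather than redoing the approximation for $x^T$, which is a legitimate shortcut.
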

	\begin{proof}
		For each $\ep>0$, we have $\varphi_\ep, S^{'}_\ep, S_\ep$ from \cref{Lem-Cut-off}. Let $\tilde{Y}_\ep:\mbR^{n}\ra\mbR^{n}$ be a $C^2$-vector field satisfying 
		\begin{align*}
			\vert\tilde{Y}_\ep\vert\leq C\text{ in a neighborhood of }M\setminus S'_\ep,\qquad\tilde{Y}_\ep=\left(x-\left<x,\nu\right>\nu\right)\text{ on }M\setminus S^{'}_{\ep}. 
		\end{align*}
		Notice that $M$ is bounded, and hence $\vert x-\left<x,\nu\right>\nu\vert$ is a bounded function on ${\rm reg}M$.
		Let $Y_\ep\in C^2(\mbR^{n};\mbR^{n})$ be the vector field given by $Y_\ep=\varphi_\ep \tilde{Y}_\ep.$
		
		Now we argue as the proof of \cref{PropAS16-2.4}.
		Integrating ${\rm div}_M\left(Y_\ep\right)$ over $M\setminus S'_\ep$ and using the tangential divergence theorem, then sending $\ep\searrow0$.
		By virtue of \eqref{pw-conv},\eqref{esti-cutoff-1}, and the fact that $H\in L^1(M)$, we can use the dominated convergence theorem to see that
		\begin{align}\label{AS16-5.1-Wedge}
			\sum_{i=1}^L\int_{\Gamma_i}\left<x,\mu_i\right>\rd\mcH^{n-2}=\int_M\left\{(n-1)-H\left<x,\nu\right>\right\}\rd\mcH^{n-1}.
		\end{align}
		
		On the other hand,
  by virtue of \cref{Prop-capillaryCMC-W} (the constant contact angle condition), on ${\rm reg}\Gamma_i$ there holds
		\begin{align}\label{ContactAngle-Wedge}
			-\cos{\theta_i}\nu+\sin{\theta_i}\mu_i=\oN_i,
		\end{align}
		it follows that
		\begin{align}\label{x,n_i}
			-\cos\theta_i\left<x,\nu\right>
			+\sin\theta_i\left<x,\mu_i\right>=\left<x,\mfn_i\right>,
		\end{align}
		and it follows from $\left<x,\bar N_i\right>=0$ on each $P_i$ (since the origin $O\in\mfW$) that
		\begin{align}\label{eq-Minko-W-1}
		    \cos\theta_i\left<x,\nu\right>=\sin\theta_i\left<x,\mu_i\right>.
		\end{align}
		Notice that \eqref{mfk} implies
		\begin{align}\label{k,n_i}
			-\cos\theta_i\left<\nu,\mfk\right>+\sin\theta_i\left<\mu_i,\mfk\right>=\left<\mfn_i,\mfk\right>=\cos\theta_i.
		\end{align}
		Interior producting \eqref{AS16-(2.3)} with $\mfk$, exploiting \eqref{x,n_i} and \eqref{k,n_i}, we obtain
		\begin{align}\label{AS16-5.2-Wedge}
			(n-1)\int_{M}\left<\nu,\mfk\right>\rd\mcH^{n-1}
			=&\sum_{i=1}^L\int_{\Gamma_i}\left\{\left<x,\mu_i\right>\left<\nu,\mfk\right>-\left<x,\nu\right>\left<\mu_i,\mfk\right>\right\}\rd\mcH^{n-2}\notag\\
			=&\sum_{i=1}^L\int_{\Gamma_i}\left\{\left<x,\mu_i\right>\left<\nu,\mfk\right>+\left<x,\nu\right>\frac{-\cos\theta_i}{\sin\theta_i}\left(1+\left<\nu,\mfk\right>\right)\right\}\rd\mcH^{n-2}\notag\\
			%=&\sum_{i=1}^L\int_{\p M\cap P_i}\left\{\left<x,\mu_{\Gamma_i}^M\right>\left<\mu_M,\mfk\right>+\frac{1}{\sin\theta_i}\left(\left<x,\mfn_i\right>-\sin\theta_i\left<x,\mu_{\Gamma_i}^M\right>\right)\left(1+\left<\mu_M,\mfk\right>\right)\right\}d\mcH^{n-2}\notag\\
			=&-\sum_{i=1}^L\int_{\Gamma_i}\left<x,\mu_i\right>\rd\mcH^{n-2},
		\end{align}
		where in the last equality, we have used \eqref{x,n_i} and \eqref{eq-Minko-W-1}.
		
		Combining \eqref{AS16-5.1-Wedge} with \eqref{AS16-5.2-Wedge}, we see that 	
		\begin{align*}
			\int_M\left\{(n-1)-H\left<x,\nu\right>+(n-1)\left<\nu,\mfk\right>\right\}\rd\mcH^{n-1}=0.
		\end{align*}
		%On the other hand, we consider the position vector field $X(x)=x$, integrating ${\rm div}_{\Gamma_i}(\varphi_\ep X)$ on $\Gamma_i\setminus S'_\ep$ and using the classical divergence theorem, we get
		%\begin{align*}
		    %&\int_{\Gamma_i\setminus S'}(n-2)\rd\mcH^{n-2}(x)+\int_{\Gamma_i\cap(S_\ep\setminus S'_\ep)}\left\{\left<\nabla^{\Gamma_i}\varphi_\ep,x\right>+\varphi_\ep(n-2)\right\}\rd\mcH^{n-2}\\
		    %=&\int_{\Gamma_i\setminus S'_\ep}H_{\Gamma_i}\varphi_\ep\left<x,\bar\mu_i\right>\rd\mcH^{n-2}+\int_{\Gamma_i\cap\p S'_\ep}0,
		%\end{align*}
		%since this is the case when $q=2$ in \cref{Lem-Cut-off}, since $H_{\Gamma_i}\in L^1(\Gamma_i)$, we may use the dominated convergence to obtain \eqref{formu-Minko-Gamma}.
		This completes the proof.
	\end{proof}
		
	Here we record the following Balancing formula, which was derived when $M$ is $C^2$ (see for example \cite[Lemma 1]{CK16} or \cite[Lemma 7]{LX17}). The proof can be adapted to our singular case by using the approximation argument as in the proof of \cref{PropAS16-2.4}.
	Here we state it without proof.
	
	\begin{lemma}[Balancing formula]
		Let $E$ be as in \cref{PropMinkowski-Poly},
  %and $H$ is a constant on ${\rm reg}M$.
  then for each $i=1,\ldots,L$, it holds that
		\begin{align}\label{balance}
			H\mcH^{n-1}(B^+_i)=\sin\theta_i\mcH^{n-2}(\Gamma_i).
		\end{align}
	\end{lemma}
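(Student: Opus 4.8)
The plan is to follow the smooth balancing argument of \cite{CK16,LX17}, but to run it entirely through globally defined constant vector fields, so that the singular divergence theorems \eqref{formu-div-M} and \eqref{formu-div-B+} of \cref{Lem-divergenctheorem} apply \emph{directly} (recall that these already absorb the cut-off approximation of \cref{Lem-Cut-off}, and that they hold under the standing hypotheses $\mcH^{n-2}({\rm sing}M)=0$ and $H\in L^1(M)$ inherited from \cref{PropMinkowski-Poly}). Fix an arbitrary constant vector $a\in\mbR^n$. Testing \eqref{formu-div-M} with $X\equiv a$, whose tangential divergence vanishes, and using that ${\rm reg}M$ has constant mean curvature $H$ by \cref{Prop-capillaryCMC-W}, I obtain the flux identity
\begin{align*}
	H\int_M\left<a,\nu\right>\rd\mcH^{n-1}+\sum_{i=1}^L\int_{\Gamma_i}\left<a,\mu_i\right>\rd\mcH^{n-2}=0.
\end{align*}

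Next I rewrite each contact-line integral. Along ${\rm reg}\Gamma_i$, Young's law \eqref{condi-Young-W} together with $\oN_i=\mfn_i$ (the outer normal of $B_i^+\subset P_i$ is the exterior normal of $P_i$) expresses $\{\mfn_i,\onu_i\}$ as an orthogonal transformation of $\{\nu,\mu_i\}$ in the plane normal to $\Gamma_i$; solving the pair $\mfn_i=-\cos\theta_i\nu+\sin\theta_i\mu_i$, $\onu_i=\sin\theta_i\nu+\cos\theta_i\mu_i$ gives $\mu_i=\sin\theta_i\,\mfn_i+\cos\theta_i\,\onu_i$. Since $\onu_i$ is tangent to $P_i$, the vector field $a-\left<a,\mfn_i\right>\mfn_i$ is a constant field tangent to $P_i$ with vanishing tangential divergence along the flat face, so \eqref{formu-div-B+} applied on $B_i^+$ yields $\int_{\Gamma_i}\left<a,\onu_i\right>\rd\mcH^{n-2}=0$. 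Consequently
\begin{align*}
	\int_{\Gamma_i}\left<a,\mu_i\right>\rd\mcH^{n-2}=\sin\theta_i\left<a,\mfn_i\right>\mcH^{n-2}(\Gamma_i).
\end{align*}

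On the other hand, the Gauss--Green theorem for the set of finite perimeter $E$ applied to the constant field $a$ gives $\int_M\left<a,\nu\right>\rd\mcH^{n-1}+\sum_{i=1}^L\left<a,\mfn_i\right>\mcH^{n-1}(B_i^+)=0$, because the outer normal of $E$ along $B_i^+$ is the constant $\mfn_i$. Substituting this and the preceding display into the flux identity, every coefficient $\left<a,\mfn_i\right>$ factors out and I arrive at
\begin{align*}
	\sum_{i=1}^L\left<a,\mfn_i\right>\Bigl(H\mcH^{n-1}(B_i^+)-\sin\theta_i\,\mcH^{n-2}(\Gamma_i)\Bigr)=0.
\end{align*}
As $a$ is arbitrary, this is the vector identity $\sum_{i=1}^L u_i\,\mfn_i=0$ with $u_i:=H\mcH^{n-1}(B_i^+)-\sin\theta_i\,\mcH^{n-2}(\Gamma_i)$; the defining linear independence of $\{\mfn_1,\dots,\mfn_L\}$ for a wedge then forces $u_i=0$ for every $i$, which is precisely \eqref{balance}.

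The genuinely delicate point is not the algebra but the legitimacy of the two \emph{surface} divergence theorems across ${\rm sing}M$: this is exactly where $\mcH^{n-2}({\rm sing}M)=0$ and $H\in L^1(M)$ enter, through the cut-off functions of \cref{Lem-Cut-off} already built into \cref{Lem-divergenctheorem}. Since I only ever test against the globally $C^\infty$ fields $a$ and $a-\left<a,\mfn_i\right>\mfn_i$, \cref{Lem-divergenctheorem} is applicable without any further work and no separate approximation is required; alternatively one may re-run the cut-off scheme verbatim as in the proof of \cref{PropAS16-2.4}. The second structural step worth isolating is the deconvolution from one summed identity to the per-face conclusion, which crucially exploits the wedge hypothesis that the normals are linearly independent; for a general domain with planar faces this step, and hence the face-by-face balancing, would fail.
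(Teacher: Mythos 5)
Your proof is correct and is essentially the argument the paper intends: the lemma is stated without proof, with a pointer to the smooth case in \cite{CK16,LX17} plus the approximation scheme of \cref{PropAS16-2.4}, and that smooth proof is exactly your combination of the tangential divergence theorem for a constant field on $M$, the Gauss--Green identity for $E$ giving $\int_M\left<a,\nu\right>\rd\mcH^{n-1}=-\sum_i\left<a,\mfn_i\right>\mcH^{n-1}(B_i^+)$, the decomposition $\mu_i=\sin\theta_i\,\mfn_i+\cos\theta_i\,\onu_i$ with $\int_{\Gamma_i}\left<a,\onu_i\right>\rd\mcH^{n-2}=0$, and the linear independence of the $\mfn_i$ to deconvolve the summed identity. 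Your streamlining of invoking \cref{Lem-divergenctheorem} directly (legitimate, since the test fields are globally smooth and the cut-off argument is already absorbed there) rather than re-running the approximation is a fair simplification and introduces no gap.
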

	
We record the following point-wise formulas in \cite{Souam21}, which will be needed in the proof of \cref{Thm-wedgeShaped}. 
	\begin{lemma}[{\cite[Proof of Theorem 3.4]{Souam21}}]\label{Lem6}
		Assume $H$ is a constant on ${\rm reg}M$ and  the boundary contact angle condition \eqref{condi-Young-W1}holds. Define \begin{align}\label{zeta-wedge}\zeta:=(n-1)-H\left<x,\nu\right>+(n-1)\left<\mfk,\nu\right>\end{align} and \begin{align}\label{Phi-wedge}\Phi:=\frac{H}{n-1}\vert x\vert^2-2\left<x,\nu\right>\end{align} on ${\rm reg}M$. Then
		\begin{enumerate}

   %\label{Lem6-i} (\cite[Lemma 6]{LX17})
			
			\item %\label{Lem6-ii}
			%(\cite[Proposition 3.4, (3.15) - (3.17)]{WX19})
			On ${\rm reg}M$,
			\begin{align}
			%&\Delta_M\frac{1}{2}\vert x\vert^2=(n-1)-H\left<x,\nu\right>,\notag\\
				%&\De_M\left<x,\nu\right>+\vert\vert h\vert\vert^2\left<x,\nu\right>=H,\notag\\
				%&\De_M\left<\nu,\mfk\right>+\vert\vert h\vert\vert^2\left<\nu,\mfk\right>=0,\notag\\
    &  \De_M\zeta=-H^2-\vert\vert h\vert\vert^2\left(\zeta-(n-1)\right),\label{DeMzeta+BMzeta}
			\end{align}
   %\item (\cite[Proof of Theorem 3.2]{Souam21})
  \begin{align}\label{DeMPhi}
      \De_M \Phi=2\left(\vert\vert h\vert\vert^2-\frac{H^2}{n-1}\right)\left<x,\nu\right>.
  \end{align}
 % \item (\cite[Proof of Theorem 3.4]{Souam21})
 \item For each $i$, on ${\rm reg}\Gamma_i$,
			\begin{align}\label{zeta-boudary}
				\left<\nabla^M\zeta,\mu_i\right>-q_i\zeta=0,
			\end{align}
  \begin{align}\label{Phi-boundary}
      \left<\nabla^M\Phi,\mu_i\right>
      =-2\left(\frac{n-2}{n-1}H-\sin\theta_iH_{\Gamma_i}\right)\left<x,\mu_i\right>,
  \end{align}
  where $H_{\Gamma_i}$ is the mean curvature of ${\rm reg}\Gamma_i\subset P_i$.
		\end{enumerate}
	\end{lemma}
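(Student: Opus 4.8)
The plan is to verify all four identities as pointwise computations, the interior ones \eqref{DeMzeta+BMzeta}--\eqref{DeMPhi} on ${\rm reg}M$ and the boundary ones \eqref{zeta-boudary}--\eqref{Phi-boundary} on each ${\rm reg}\Gamma_i$, reducing everything to standard Euclidean hypersurface formulas once we invoke the two standing hypotheses, namely that $H$ is constant and that the contact angle is constant. I would fix a point and an orthonormal frame $\{\tau_a\}_{a=1}^{n-1}$ of $TM$ that is geodesic there, and use the shape operator $S$ determined by $\langle S(X),Y\rangle=h(X,Y)=\langle\nabla_X\nu,Y\rangle$. Writing $x^T,\mfk^T$ for the tangential projections onto $TM$ of the position vector $x$ and the constant vector $\mfk$, the Weingarten relation $\nabla_{\tau_a}\nu=S(\tau_a)$ gives at once
\begin{align*}
\nabla^M\langle x,\nu\rangle=S(x^T),\qquad \nabla^M\langle \mfk,\nu\rangle=S(\mfk^T),\qquad \nabla^M\vert x\vert^2=2\,x^T.
\end{align*}

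For the interior Laplacians the key input is the flat-space Codazzi identity, which yields, for the ambient derivative, $\sum_a\nabla_{\tau_a}(\nabla_{\tau_a}\nu)=\nabla^M H-\vert\vert h\vert\vert^2\nu$; since $H$ is constant this equals $-\vert\vert h\vert\vert^2\nu$. Pairing with the constant vector $\mfk$ gives $\Delta_M\langle\mfk,\nu\rangle=-\vert\vert h\vert\vert^2\langle\mfk,\nu\rangle$, while pairing with the position vector $x$ produces an additional trace term $\sum_a h(\tau_a,\tau_a)=H$, so that $\Delta_M\langle x,\nu\rangle=H-\vert\vert h\vert\vert^2\langle x,\nu\rangle$; a direct computation using $\nabla_{\tau_a}\tau_a=-h(\tau_a,\tau_a)\nu$ at the point gives $\Delta_M\vert x\vert^2=2(n-1)-2H\langle x,\nu\rangle$. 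Substituting these into the definitions \eqref{zeta-wedge} of $\zeta$ and \eqref{Phi-wedge} of $\Phi$ and collecting terms, using $\zeta-(n-1)=-H\langle x,\nu\rangle+(n-1)\langle\mfk,\nu\rangle$, yields \eqref{DeMzeta+BMzeta} and \eqref{DeMPhi}.

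The boundary identities rest on two geometric facts along ${\rm reg}\Gamma_i$, both obtained by differentiating the constant-angle frame relations. Since $\oN_i$ equals the constant normal of the plane $P_i$ and, by \eqref{ContactAngle-Wedge}, $\oN_i=-\cos\theta_i\,\nu+\sin\theta_i\,\mu_i$ with $\theta_i$ constant, differentiating along any $e\in T\Gamma_i$ and pairing with $\nu$ forces $h(\mu_i,e)=0$; thus $\mu_i$ is a principal direction of $M$ along $\Gamma_i$. Writing instead $\nu=-\cos\theta_i\,\oN_i+\sin\theta_i\,\onu_i$ and differentiating gives $\nabla_e\nu=\sin\theta_i\,\nabla_e\onu_i$, hence $h(e,e')=\sin\theta_i\,h^{\Gamma_i}(e,e')$ for $e,e'\in T\Gamma_i$, where $h^{\Gamma_i}(e,e')=\langle\nabla_e\onu_i,e'\rangle$ is the second fundamental form of $\Gamma_i$ in $P_i$ with respect to $\onu_i$; tracing and using that $\mu_i$ is principal yields $H-h(\mu_i,\mu_i)=\sin\theta_i\,H_{\Gamma_i}$. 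With $\mu_i$ principal I decompose $x^T=\langle x,\mu_i\rangle\mu_i+x^{\Gamma_i}$ and $\mfk^T=\langle\mfk,\mu_i\rangle\mu_i+\mfk^{\Gamma_i}$, so that $h(x^T,\mu_i)=\langle x,\mu_i\rangle\,h(\mu_i,\mu_i)$ and likewise for $\mfk$, giving
\begin{align*}
\langle\nabla^M\zeta,\mu_i\rangle=h(\mu_i,\mu_i)\bigl(-H\langle x,\mu_i\rangle+(n-1)\langle\mfk,\mu_i\rangle\bigr).
\end{align*}
Feeding in the boundary relations $\langle x,\mu_i\rangle=\cot\theta_i\langle x,\nu\rangle$ from \eqref{eq-Minko-W-1} and $\langle\mfk,\mu_i\rangle=\cot\theta_i\bigl(1+\langle\mfk,\nu\rangle\bigr)$ from \eqref{k,n_i} collapses the bracket to $\cot\theta_i\,\zeta$, which is \eqref{zeta-boudary} with $q_i=\cot\theta_i\,h(\mu_i,\mu_i)$ as in \eqref{q-wedge}. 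For $\Phi$ the same projection gives $\langle\nabla^M\Phi,\mu_i\rangle=2\langle x,\mu_i\rangle\bigl(\frac{H}{n-1}-h(\mu_i,\mu_i)\bigr)$, and substituting $h(\mu_i,\mu_i)=H-\sin\theta_i\,H_{\Gamma_i}$ produces \eqref{Phi-boundary}.

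The main obstacle is precisely this boundary step: establishing cleanly the two coupling identities—that $\mu_i$ is a principal direction and that $h\vert_{T\Gamma_i}=\sin\theta_i\,h^{\Gamma_i}$—since these are where the constant contact angle transfers second-order information from $M$ to the flat wall $P_i$ and fix the correct sign convention for $H_{\Gamma_i}$. Once they are in hand, the remaining work is purely algebraic substitution of the Minkowski-type boundary relations \eqref{x,n_i}, \eqref{eq-Minko-W-1} and \eqref{k,n_i} already recorded in the proof of \cref{PropMinkowski-Poly}.
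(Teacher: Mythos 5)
Your proof is correct: the interior identities follow from the standard Weingarten/Codazzi computations $\Delta_M\langle x,\nu\rangle=H-\vert\vert h\vert\vert^2\langle x,\nu\rangle$, $\Delta_M\langle\mfk,\nu\rangle=-\vert\vert h\vert\vert^2\langle\mfk,\nu\rangle$, $\Delta_M\vert x\vert^2=2(n-1)-2H\langle x,\nu\rangle$, and the boundary identities from the two facts you isolate (that $\mu_i$ is principal along ${\rm reg}\Gamma_i$ and that $H-h(\mu_i,\mu_i)=\sin\theta_i H_{\Gamma_i}$), combined with the relations $\langle x,\mu_i\rangle=\cot\theta_i\langle x,\nu\rangle$ and $\langle\mfk,\mu_i\rangle=\cot\theta_i(1+\langle\mfk,\nu\rangle)$ already recorded in the paper. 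The paper itself gives no proof of this lemma, simply citing Souam's computation, and your argument is exactly the standard pointwise verification that reference carries out, so there is nothing to compare beyond noting that your write-up fills in the omitted details correctly.
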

 
	\
 
	\begin{proof}[Proof of \cref{Thm-wedgeShaped}]
	Our starting point is \cref{Prop-capillaryCMC-W} and \cref{Prop-StableSet-W}, and we note that these propositions are satisfied by the local minimizers of the free energy functional under volume constraint, see \cref{rmk-local-min}.
	Moreover, we learn from \cref{regularity-thm} that $\vert\vert h\vert\vert\in L^2(M)$, which is crucial in the following proof.
	
	To proceed, notice that the Balancing formula \eqref{balance} shows that the mean curvature of $M$ can not be 0,
	thus we may assume $H>0$ in the following.
		
We shall use $\zeta$ defined by \eqref{zeta-wedge}.	By virtue of the Minkowski-type formula \cref{PropMinkowski-Poly}, we know $\zeta\in C^2({\rm reg}M)$ and $\int_{M}\zeta(x)\rd\mcH^{n-1}(x)=0$. Hence $\zeta$ is a possible candidate for testing the Poincar\'e-type inequality \eqref{result2-wedge}.

	To appeal to \eqref{result2-wedge}, we need to verify the integrability conditions \eqref{condi-integrability-W} for $\zeta$.
First, since $M$ is compact and $H$ is a constant on ${\rm reg}M$, $\zeta\in L^\infty$.
Second,
 \eqref{DeMzeta+BMzeta} and the fact $\vert\vert h\vert\vert\in L^2(M)$ tells that 
$\zeta\De_M\zeta+\vert\vert h\vert\vert^2\zeta^2\in L^2(M)$, while
\eqref{zeta-boudary} tells 
	    $\zeta\left<\nabla^M\zeta,\mu_i\right>-q_i\zeta^2\in L^1(\Gamma)$.
Hence, we may test \eqref{result2-wedge} with $\zeta$ and find that	\begin{align}\label{eqcmain}
			-\int_M\left\{(n-1)\vert\vert h\vert\vert^2-H^2\right\}\left((n-1)-H\left<x,\nu\right>+(n-1)\left<\mfk,\nu\right>\right) \rd\mcH^{n-1}\geq0.
		\end{align}
		Next, we will prove that 
		\begin{align}\label{eqc0}
			\int_{M}\left\{(n-1)\vert\vert h\vert\vert^2-H^2\right\}H\left<x,\nu\right>\rd\mcH^{n-1}=0.
		\end{align}
		To this end, we exploit the function $\Phi$ defined by \eqref{Phi-wedge} on ${\rm reg}M$.
		
		For any $\ep>0$, we have $\varphi_\ep, S^{'}_\ep, S_\ep$ from \cref{Lem-Cut-off}. Let $\tilde{Y}_\ep:\mbR^{n}\ra\mbR^{n}$ be a smooth vector field satisfying %\footnote{Here $C_5$ is a constant independent of $\ep$, by virtue of the fact that $M$ is bounded.} 
$\tilde{Y}_\ep=\nabla^M\Phi\text{ on }M\setminus S^{'}_{\ep}. 
		$
		Then let $Y_\ep\in C^1(\mbR^{n};\mbR^{n})$ be the vector field given by
		$
			Y_\ep=\varphi_\ep \tilde{Y}_\ep.$
		Integrating ${\rm div}_M(Y_\epsilon)$ on $M\setminus S'_\ep$ and using the classical divergence theorem,
		we find
		\begin{align}\label{eq-ThmW-1}
		    \int_M\left<\nabla^M\varphi_\ep,\nabla^M\Phi\right>\rd\mcH^{n-1}+\int_{M}\varphi_\ep\De_M\Phi \rd\mcH^{n-1}
      =\sum_{i=1}^L\int_{\Gamma_i}\varphi_\ep\left<\nabla^M\Phi,\mu_i\right>\rd\mcH^{n-2}.
		\end{align}
		
		Now we deal with the term $\int_{\Gamma_i}\varphi_\ep\left<\nabla^M\Phi,\mu_i\right>\rd\mcH^{n-2}$, which is explicitly expressed in \eqref{Phi-boundary}.
  Notice that
		\begin{align*}
		    \sum_{i=1}^L\int_{\Gamma_i}\varphi_\ep\sin\theta_i H_{\Gamma_i}\left<x,\mu_i\right>\rd\mcH^{n-2}
		    =\sum_{i=1}^L\sin\theta_i\cos\theta_i\int_{\Gamma_i}\varphi_\ep H_{\Gamma_i}\left<x,\bar\nu_i\right>\rd\mcH^{n-2},
		\end{align*}
		since the origin $O\in\mfW$ and thanks to the constant contact angle condition \eqref{condi-Young-W}.
		
		On the other hand,
		we consider the position vector field $X(x)=x$, integrating ${\rm div}_{\Gamma_i}(\varphi_\ep X)$ on $\Gamma_i\setminus S'_\ep$ and using the classical divergence theorem, we get
		\begin{align*}
		    \int_{\Gamma_i}(n-2)\varphi_\ep\rd\mcH^{n-2}(x)+\int_{\Gamma_i}\left<\nabla^{\Gamma_i}\varphi_\ep,x\right>\rd\mcH^{n-2}
		    =\int_{\Gamma_i}\varphi_\ep H_{\Gamma_i}\left<x,\bar\nu_i\right>\rd\mcH^{n-2},
		\end{align*}
		which in turn gives that
		\begin{align}\label{eq-ThmW-2}
		    &\sum_{i=1}^L\int_{\Gamma_i}\varphi_\ep\sin\theta_i H_{\Gamma_i}\left<x,\mu_i\right>\rd\mcH^{n-2}\notag\\
		    =&\sum_{i=1}^L\sin\theta_i\cos\theta_i\left\{\int_{\Gamma_i}(n-2)\varphi_\ep\rd\mcH^{n-2}(x)+\int_{\Gamma_i}\left\{\left<\nabla^{\Gamma_i}\varphi_\ep,x\right>\right\}\rd\mcH^{n-2}\right\}.
		\end{align}
		Now we expand \eqref{eq-ThmW-1} by virtue of \eqref{DeMPhi}, \eqref{Phi-boundary} and \eqref{eq-ThmW-2}, then we may send $\ep\searrow0$ and use \eqref{pw-conv},\eqref{esti-cutoff-1},\eqref{esti-cutoff-3},
		the fact that $\vert\vert h\vert\vert\in L^2(M)$, to appeal to the dominated convergence theorem and conclude that
			\begin{align}\label{eq-ThmW-3}
			&\int_M\left(\vert\vert h\vert\vert^2-\frac{H^2}{n-1}\right)\left<x,\nu\right>\rd\mcH^{n-1}\notag\\
			=&-(n-2)\sum_{i=1}^L\left(\cos\theta_i\int_{\Gamma_i}\frac{H}{n-1}\left<x,\onu_i\right>\rd\mcH^{n-2}-\sin\theta_i\cos\theta_i \mcH^{n-2}(\Gamma_i)\right).
		\end{align}
		Since $\mfk$ is a constant vector field, and notice that ${\rm div}_{P_i}\mfk^T=0$, ${\rm div}_M\mfk^T=-H\left<\nu,\mfk\right>$, we may use \eqref{formu-div-B+}, \eqref{formu-div-M} and \eqref{condi-Young-W} to find
		\begin{align}\label{eq-ThmW-4}
		0&=\int_{B_i^+}{\rm div}_{P_i}\mfk^T\rd\mcH^{n-1}
		=\int_{\Gamma_i}\left<\mfk,\bar\nu_i\right>\rd\mcH^{n-1}\notag,
		\\
		    \int_{M}-H\left<\nu,\mfk\right>\rd\mcH^{n-1}
		    &=\sum_i^{L}\int_{\Gamma_i}\left<\mfk,\sin\theta_i\bar N_i+\cos\theta_i\bar\nu_i\right>\rd\mcH^{n-2}
		    =\sum_{i=1}^L\sin\theta_i\cos\theta_i\mcH^{n-2}(\Gamma_i).
		\end{align}
		On the other hand, \eqref{AS16-5.2-Wedge} and the contact angle condition give
		\begin{align}\label{eq-ThmW-5}
		   \int_{M}-H\left<\nu,\mfk\right>\rd\mcH^{n-1}
			=\cos\theta_i\sum_{i=1}^L\int_{\Gamma_i}\frac{H}{n-1}\left<x,\bar\nu_i\right>\rd\mcH^{n-2}.
		\end{align}
		Combining \eqref{eq-ThmW-4} with \eqref{eq-ThmW-5}, we see that the RHS of \eqref{eq-ThmW-3} vanishes, which in turn shows \eqref{eqc0}.

			\eqref{eqcmain} thus becomes
			\begin{align*}
				-\int_M\left\{(n-1)\vert\vert h\vert\vert^2-H^2\right\}\left(1+\left<\mfk,\nu\right>\right) \rd\mcH^{n-1}\geq0.
			\end{align*}
			
   On the other hand, since $\vert\mfk\vert\leq1$,
   we have $1+\left<\mfk,\nu\right>\geq0$, and the Cauchy-Schwarz inequality implies $(n-1)\vert\vert h\vert\vert^2\geq H^2.$ Hence 
			\begin{align*}
				\int_{M}\left\{(n-1)\vert\vert h\vert\vert^2-H^2\right\}\left(1+\left<\mfk,\nu\right>\right)\rd\mcH^{n-1}\geq0.
			\end{align*}
			Therefore we conclude that $(n-1)\vert\vert h\vert\vert^2=H^2$ on ${\rm reg}M$.
			This means, since the equality case of Cauchy-Schwarz inequality happens, that the principal curvatures of $M$ coincide at every point of ${\rm reg}M$, and hence
			%By virtue of \cref{PropCMC-Poly}, ${\rm reg}M$ is locally an analytic umbilical hypersurface in $\mbR^n$, which means 
			it must be locally spherical. 
			
		For $n=3$, ${\rm sing}M=\emptyset$, hence it suffices to consider the possibility of non-intersecting spherical caps, which we shall discuss later.
		Let us consider the situation when $n>3$.
		Since we assume $\mathcal{H}^{n-3}({\rm sing}M)=0$, we exclude the possibility of intersecting but not tangential  spherical caps, which has non-vanishing $\mathcal{H}^{n-3}$ singularities.
			
		Next we exclude the possibility of non-intersecting or tangential spherical caps (or spheres). We only need to handle two such caps (or spheres).
			Let $C_1$ and $C_2$ be two caps and $M=C_1\cup C_2$. Consider on $C_i, i=1,2$ the Robin eigenvalue problem:
			$$-(\Delta_M+\|h\|^2)f=\lambda_1 f \hbox{ on }C_i, \quad \<\nabla^M f,\mu\>=qf \hbox{ on }\p C_i.$$
			It is known that the first Robin eigenvalue $\lambda_1<0$, see \cite[Appendix A]{GWX22}.
			Let us now consider a smooth function on ${\rm reg}M$, given by 
			\begin{equation*}
			    \zeta=\begin{cases}
			       f, &\hbox{ on }C_1,\\
			       -f, &\hbox{ on }C_2.
			    \end{cases}
			\end{equation*}
			It is clear that $\int_{M}\zeta=\int_{C_1}f+\int_{C_2}(-f)=0$. On the other hand, because $\lambda_1<0$, we see that 
			\begin{align*}\label{Poincare-Wedge}
		-\int_M\left(\De_M\zeta+\vert\vert h\vert\vert^2\zeta\right)\zeta \rd\mcH^{n-1}+\int_{\Gamma}\left(\left<\nabla^M\zeta,\mu\right>-q\zeta\right)\zeta \rd\mcH^{n-2}<0,
	\end{align*}
	which is a contradiction to the Poincar\'e-type inequality \eqref{result2-wedge}.
	
	For the case $M=C_1\cup S$ for some spherical cap $C_1$ and a closed sphere $S$, we may use a similar contradiction argument by choosing \begin{equation*}
			    \zeta=\begin{cases}
			       & f, \hbox{ on }C_1,\\
			        &c, \hbox{ on }S,
			    \end{cases}
	\end{equation*}
	where  $c$ is a constant such that $\int_M\zeta=0$, to conclude that this is not possible.
	Thus we have proved that $M$ is a spherical cap.
			
	Finally, we finish our proof by showing that as a special case when $\mfW$ is just a half-space, the condition $\vert\mfk\vert=|\cos\theta|\leq1$ is trivially valid,
		and hence the proof above holds in this case.
		This completes the proof.
		\end{proof}
		
		\begin{remark}
			\normalfont
			When each $\theta_i= \frac{\pi}{2}$, it is apparent that $\vert\mfk\vert=0$, which means that \cref{Thm-wedgeShaped} holds true in this situation.
			In particular, this generalizes the results for the smooth stable free boundary capillary hypersurface in a wedge of L\'opez \cite[Theorem 1]{Lopez14} to the non-smooth case.
		\end{remark}
		%%%%%%%%%%%%%%%%%%%%%%%
		%%%%%%%%%%%%%%%%%%%%%%%%%%%%%%%%%%%%%%%%%%%%%%%%%%%%%%%%%%%%%%%%%%%%%%%%%%%%%%%%%%%%%%%%%%%%
		
	\section{Volume-constraint local minimizers in a  ball}\label{Sec-6}
		In this section, we prove \cref{umbillic} for the case $\O=\mfB^n$.
  The proof follows largely from \cite{WX19}.
		
		Let us first recall that a key ingredient in \cite{WX19} is the following conformal Killing vector field: fix any $a\in\mbR^n$,
		we consider a smooth vector field $X_a$ in $\rr^n$ defined by
		\begin{align*}
			X_a(x)=\left<x, a\right>x-\frac{1}{2}(\vert x\vert^2+1)a, \quad x\in\rr^n.
		\end{align*}
		Notice that $X_a$ is a conformal Killing vector field in $\rr^n$ which is tangent to $\ss^{n-1}$, in fact, this is proved in the following:
		\begin{lemma}[{\cite[Proposition 3.1]{WX19} }]\label{Lemma5.1}
			On $\mfB^n$, there holds:
			\begin{enumerate}
				\item $\mcL_{X_a}g_{\rm euc}=\left<x, a\right>g_{\rm euc}$, or equivalently
				\begin{align*}
					\frac{1}{2}[\nabla_i(X_a)_j+\nabla_j(X_a)_i]=\left<x, a\right>g_{ij}.
				\end{align*} 
				\item $\left<X_a,x\right>\mid_{\p\mbB^n}=0$.
			\end{enumerate}
			Here $\mcL$ denotes the Lie derivative, $g_{\rm euc}$ denotes the canonical Riemannian metric in $\mbR^n$, $\nabla_i(X)_j:=g_{\rm euc}(\nabla_{e_i}X,e_j)$, where $\{e_i\}_{i=1,\ldots,n}$ denote the coordinate vectors of $\mbR^n$.
		\end{lemma}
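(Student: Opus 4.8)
The plan is to prove both assertions by a direct computation in the standard Cartesian coordinates of $\mbR^n$, exploiting that the ambient space is flat: the Levi-Civita connection $\nabla$ then coincides with componentwise partial differentiation and $g_{ij}=\delta_{ij}$. Throughout I treat $a$ as a fixed constant vector, so that $a_j$ are constants and $\p_i\left<x,a\right>=a_i$, $\p_i\vert x\vert^2=2x_i$.

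For the first assertion, I would write the $j$-th component of the vector field as $(X_a)_j=\left<x,a\right>x_j-\frac{1}{2}(\vert x\vert^2+1)a_j$, and differentiate in the $i$-th direction. Using the product rule together with the two elementary identities above, one obtains
\begin{align*}
\nabla_i(X_a)_j=a_ix_j+\left<x,a\right>\delta_{ij}-x_ia_j.
\end{align*}
Symmetrizing in the indices $i$ and $j$, the skew pieces $a_ix_j-x_ia_j$ cancel against their transposes, and only the trace part survives:
\begin{align*}
\frac{1}{2}\left[\nabla_i(X_a)_j+\nabla_j(X_a)_i\right]=\left<x,a\right>\delta_{ij}=\left<x,a\right>g_{ij},
\end{align*}
which is precisely the coordinate form of $\mcL_{X_a}g_{\rm euc}=\left<x,a\right>g_{\rm euc}$. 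This exhibits $X_a$ as a conformal Killing field with conformal factor $\left<x,a\right>$.

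For the second assertion, I would simply pair $X_a$ with the position vector and simplify:
\begin{align*}
\left<X_a,x\right>=\left<x,a\right>\vert x\vert^2-\frac{1}{2}(\vert x\vert^2+1)\left<x,a\right>=\frac{1}{2}\left(\vert x\vert^2-1\right)\left<x,a\right>.
\end{align*}
On $\p\mbB^n$ one has $\vert x\vert=1$, so the factor $\vert x\vert^2-1$ vanishes identically, giving $\left<X_a,x\right>\mid_{\p\mbB^n}=0$; since $x$ is the outward normal to $\p\mbB^n$, this says $X_a$ is tangent to the sphere.

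I do not anticipate any genuine obstacle here, as the argument is elementary. The only point requiring care is the index bookkeeping in the symmetrization step: one must verify that the antisymmetric contributions $a_ix_j-x_ia_j$ really do cancel, so that the deformation tensor is pure-trace rather than merely symmetric. Once this is confirmed, both parts follow immediately, and these two properties are exactly what is needed to use $X_a$ as a volume-preserving, boundary-tangential test variation in the ball, following the scheme of \cite{WX19}.
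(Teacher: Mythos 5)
Your computation is correct: the coordinate derivative $\nabla_i(X_a)_j=a_ix_j+\left<x,a\right>\delta_{ij}-x_ia_j$ symmetrizes to the pure-trace tensor $\left<x,a\right>\delta_{ij}$, and the identity $\left<X_a,x\right>=\frac{1}{2}(\vert x\vert^2-1)\left<x,a\right>$ gives tangency on the unit sphere. The paper itself cites \cite[Proposition 3.1]{WX19} without reproducing an argument, and your direct verification is exactly the standard one used there.
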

		Similar with \cref{PropMinkowski-Poly}, we can extend the classical Minkowski-type formula for smooth capillary hypersurfaces in $\mfB^n$ \cite[Proposition 3.2]{WX19} to our singular case,
		by adapting the approximation argument in \cref{PropMinkowski-Poly} that is based on the cut-off functions constructed in \cref{Lem-Cut-off}, and exploiting \cref{PropAS16-2.4}.
		Here we state it without proof.
		\begin{proposition}[Minkowski-type formula in a ball]\label{Prop-Minko-Ball}
		    %Let $E$ be as in \cref{PropAS16-2.4} with $\Om=\mfB^n$.
		%If ${\rm reg}M$ has a fixed contact angle $\theta\in(0,\pi)$ with $\p\mfB^n$ and $H\in L^1(M)$, then there holds
  		Let $E\subset\mfB^n$ be a set of finite perimeter, which is stationary for $\mcF_\beta$ under volume constraint. Assume that $M$ and $\Gamma$ satisfy the local Euclidean volume growth condition \eqref{ineq-vg-M} and \eqref{ineq-vg-Gamma} respectively. Assume in addition that  $\mcH^{n-2}({\rm sing}M)=0$
    and $H\in L^1(M)$,
        then there holds
		\begin{align}\label{MinkowskitypeIneq}
		\int_{M}(n-1)\left<x+\cos\theta \nu,a\right>-H\left<X_a,\nu\right>\rd\mcH^{n-1}=0.
		\end{align}
		\end{proposition}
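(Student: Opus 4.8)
The plan is to follow the strategy of \cref{PropMinkowski-Poly}, replacing the position vector field by the conformal Killing field $X_a$ and its tangential part $X_a^T=X_a-\left<X_a,\nu\right>\nu$, which is a well-defined $C^2$ vector field on ${\rm reg}M$ (bounded there since $X_a$ is bounded on the compact set $M$). First I would compute its tangential divergence on ${\rm reg}M$. Since $\mcL_{X_a}g_{\rm euc}=\left<x,a\right>g_{\rm euc}$ by \cref{Lemma5.1}, tracing over an orthonormal frame of $T_xM$ gives ${\rm div}_MX_a=(n-1)\left<x,a\right>$, and together with ${\rm div}_M\left(\left<X_a,\nu\right>\nu\right)=H\left<X_a,\nu\right>$ this yields
\begin{align*}
{\rm div}_M(X_a^T)=(n-1)\left<x,a\right>-H\left<X_a,\nu\right>\qquad\text{on }{\rm reg}M.
\end{align*}

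Next I would run the cut-off approximation exactly as in \cref{PropMinkowski-Poly}: set $\tilde Y_\ep=X_a^T$ on $M\setminus S'_\ep$ and $Y_\ep=\varphi_\ep\tilde Y_\ep$ with $\varphi_\ep$ from \cref{Lem-Cut-off} in the case $q=1$ (whose hypothesis $\mcH^{n-2}({\rm sing}M)=0$ is assumed), integrate ${\rm div}_M(Y_\ep)$ over $M\setminus S'_\ep$ via the classical tangential divergence theorem, and let $\ep\searrow0$. Using \eqref{pw-conv}, \eqref{esti-cutoff-1}, the boundedness of $X_a$ on $M$, and $H\in L^1(M)$, the dominated convergence theorem applies and yields the interior--boundary identity
\begin{align*}
\int_M\left[(n-1)\left<x,a\right>-H\left<X_a,\nu\right>\right]\rd\mcH^{n-1}=\int_\Gamma\left<X_a,\mu\right>\rd\mcH^{n-2}.
\end{align*}

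It then remains to evaluate the boundary integral. On $\Gamma\subset\ss^{n-1}$ one has $|x|=1$, so $X_a=\left<x,a\right>x-a$, and Young's law (\cref{Prop-capillaryCMC} applied with $\oN=x$) gives the contact-angle relation $x=-\cos\theta\,\nu+\sin\theta\,\mu$, whence $\left<x,\nu\right>=-\cos\theta$ and $\left<x,\mu\right>=\sin\theta$. I would convert the bulk term $(n-1)\int_M\left<\nu,a\right>$ into a boundary integral by taking the inner product of the identity in \cref{PropAS16-2.4} with $a$, namely
\begin{align*}
(n-1)\int_M\left<\nu,a\right>\rd\mcH^{n-1}=\int_\Gamma\left[\sin\theta\,\left<\nu,a\right>+\cos\theta\,\left<\mu,a\right>\right]\rd\mcH^{n-2}.
\end{align*}
Substituting $\left<X_a,\mu\right>=\sin\theta\,\left<x,a\right>-\left<a,\mu\right>$ and collecting terms, the combination $\int_\Gamma\left<X_a,\mu\right>+(n-1)\cos\theta\int_M\left<\nu,a\right>$ reduces to $\sin\theta\int_\Gamma\left<x+\cos\theta\,\nu-\sin\theta\,\mu,\,a\right>\rd\mcH^{n-2}$, whose integrand vanishes pointwise by the contact-angle relation. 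Hence $\int_\Gamma\left<X_a,\mu\right>=-(n-1)\cos\theta\int_M\left<\nu,a\right>$, and inserting this into the interior--boundary identity produces exactly \eqref{MinkowskitypeIneq}.

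The main obstacle is the same as in \cref{PropMinkowski-Poly}: justifying that $X_a^T$, which is only defined on ${\rm reg}M$ and need not extend across ${\rm sing}M$, may be integrated by parts through the singular set. This is precisely what the cut-off functions of \cref{Lem-Cut-off} handle: one needs $\mcH^{n-2}({\rm sing}M)=0$ together with the Euclidean volume growth so that $\int_M|\nabla^M\varphi_\ep|\rd\mcH^{n-1}\le C\ep$, and the boundedness of $X_a$ on compact $M$ so that the cut-off error $\int_M\left<\nabla^M\varphi_\ep,\tilde Y_\ep\right>\rd\mcH^{n-1}$ vanishes as $\ep\searrow0$. Once this limiting step is in place the remainder is purely algebraic, relying only on \cref{Lemma5.1}, \cref{PropAS16-2.4}, and the constant contact-angle condition.
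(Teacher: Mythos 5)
Your proposal is correct and follows exactly the route the paper indicates for this proposition (which it states without proof): run the cut-off approximation of \cref{PropMinkowski-Poly} on the tangential part of the conformal Killing field $X_a$, using \cref{Lem-Cut-off} with $q=1$ and $H\in L^1(M)$ to pass to the limit, and then eliminate the boundary integral via \cref{PropAS16-2.4} together with the contact-angle relation $x=-\cos\theta\,\nu+\sin\theta\,\mu$ on ${\rm reg}\,\Gamma$. The divergence computation ${\rm div}_M X_a^T=(n-1)\langle x,a\rangle-H\langle X_a,\nu\rangle$ and the algebraic cancellation on $\Gamma$ are both accurate, so the argument is complete.
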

  
		We record some point-wise computations in \cite{WX19}, which are valid on the regular part of $M$.

	\begin{lemma}[{\cite[Proposition 3.5]{WX19}}]
		Assume that $H$ is a constant on ${\rm reg}M$ and the boundary contact angle condition \eqref{condi-Young-W1} holds. Define \begin{align}\label{varphi-a}	\varphi_a:=(n-1)\left<x+\cos\theta \nu,a\right>-H\left<X_a,\nu\right>\end{align} and \begin{align}\label{Phi-ball}\Phi:=\frac{1}{2}(\vert x\vert^2-1)H-(n-1)\left(\left<x,\nu\right>+\cos\theta\right)\end{align} on ${\rm reg}M$. Then
		\begin{enumerate}

   %\label{Lem6-i} (\cite[Lemma 6]{LX17})
			
			\item %\label{Lem6-ii}
			%(\cite[Proposition 3.4, (3.15) - (3.17)]{WX19})
			On ${\rm reg}M$,
			\begin{align}
			%&\Delta_M\frac{1}{2}\vert x\vert^2=(n-1)-H\left<x,\nu\right>,\notag\\
				%&\De_M\left<x,\nu\right>+\vert\vert h\vert\vert^2\left<x,\nu\right>=H,\notag\\
				%&\De_M\left<\nu,\mfk\right>+\vert\vert h\vert\vert^2\left<\nu,\mfk\right>=0,\notag\\
    &  \De_M\varphi_a+\|h\|^2\varphi_a=[(n-1)\|h\|^2-H^2]\<x, a\>,\label{DeMzeta+BMzeta-ball}
			\end{align}
   %\item (\cite[Proof of Theorem 3.2]{Souam21})
  \begin{align}\label{DeMPhi-ball}
      \Delta_M\Phi=\left((n-1)\vert\vert h\vert\vert^2-H^2\right)\left<x,\nu\right>.
  \end{align}
 % \item (\cite[Proof of Theorem 3.4]{Souam21})
 \item On ${\rm reg}\Gamma$,
			\begin{align}\label{zeta-boudary-ball}
				\left<\nabla^M\varphi_a,\mu\right>-q\varphi_a=0,\quad \Phi=0,
			\end{align}
   where \begin{align}\label{q1}
	q=\frac{1}{\sin\theta}+\cot\theta h(\mu,\mu ).
		\end{align}
		\end{enumerate}
	\end{lemma}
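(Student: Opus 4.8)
The statement records purely point-wise identities on ${\rm reg}M$, which is a smooth capillary CMC hypersurface meeting $\mbS^{n-1}$ at the constant angle $\theta$; hence no cut-off or measure-theoretic input is needed, and the proof is the classical computation of \cite[Proposition 3.5]{WX19}. The only tools I would use are a local orthonormal frame $\{e_i\}$ on ${\rm reg}M$, the Weingarten relation $\nabla^M_{e_i}\nu=\sum_j h_{ij}e_j$, the Gauss formula $\nabla_{e_i}e_j=\nabla^M_{e_i}e_j-h_{ij}\nu$, the Codazzi equation in the flat ambient space (which makes $\nabla^M h$ totally symmetric, so that $\sum_i\nabla^M_i h_{ij}=\nabla^M_j H$), the constancy of $H$ on ${\rm reg}M$, and the explicit form of $X_a$ together with its conformal Killing property from \cref{Lemma5.1}.

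I would first assemble three building-block identities, valid for constant $a$ on any CMC hypersurface: from $\Delta_M x=-H\nu$ one gets $\Delta_M\langle x,a\rangle=-H\langle\nu,a\rangle$; a direct computation gives $\Delta_M|x|^2=2(n-1)-2H\langle x,\nu\rangle$; and, using Codazzi and the constancy of $H$, the support function satisfies $\Delta_M\langle x,\nu\rangle=H-\|h\|^2\langle x,\nu\rangle$. The identity \eqref{DeMPhi-ball} for $\Phi$ is then an immediate linear combination of the last two. For the boundary value $\Phi=0$ in \eqref{zeta-boudary-ball}, note that on $\Gamma\subset\mbS^{n-1}$ one has $|x|=1$, which annihilates the first term, while the contact-angle condition \eqref{condi-Young-W1} together with $\oN=x$ on $\mbS^{n-1}$ gives $\langle x,\nu\rangle=\langle\nu,\oN\rangle=-\cos\theta$, so that the second term vanishes as well.

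The interior identity \eqref{DeMzeta+BMzeta-ball} for $\varphi_a$ is the heart of the interior computation. In addition to the above I would record $\Delta_M\langle\nu,a\rangle=-\|h\|^2\langle\nu,a\rangle$ (again via Codazzi and CMC), and then compute, writing $X_a$ out explicitly,
\[
\Delta_M\langle X_a,\nu\rangle+\|h\|^2\langle X_a,\nu\rangle=H\langle x,a\rangle-(n-1)\langle\nu,a\rangle.
\]
The precise coefficient $-\tfrac12(|x|^2+1)$ in the definition of $X_a$ is exactly what forces the cross terms $h(x^T,a^T)$ and the terms $H\langle x,\nu\rangle\langle\nu,a\rangle$ to cancel; this is the algebraic manifestation of $X_a$ being conformal Killing. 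Writing $\varphi_a=(n-1)\langle x,a\rangle+(n-1)\cos\theta\,\langle\nu,a\rangle-H\langle X_a,\nu\rangle$ and combining the three identities — the middle summand drops out because $\Delta_M\langle\nu,a\rangle+\|h\|^2\langle\nu,a\rangle=0$ — yields $\Delta_M\varphi_a+\|h\|^2\varphi_a=[(n-1)\|h\|^2-H^2]\langle x,a\rangle$, as claimed.

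The boundary Robin condition in \eqref{zeta-boudary-ball} is where I expect the main difficulty. Along ${\rm reg}\Gamma$ I would adapt the frame so that $e_1=\mu$ and $e_2,\dots,e_{n-1}$ are tangent to $\Gamma$, and use the normal-plane relation $x=\oN=-\cos\theta\,\nu+\sin\theta\,\mu$ coming from \eqref{condi-Young-W1}, the orthogonality $\langle X_a,x\rangle=0$ on $\mbS^{n-1}$ from \cref{Lemma5.1}, and the umbilicity of $\mbS^{n-1}$, i.e. $h^{\p\Om}(\onu,\onu)=1$. Differentiating $\varphi_a$ in the direction $\mu$ and substituting the conformal-Killing derivative $\langle\nabla_\mu X_a,\nu\rangle=\langle\mu,a\rangle\langle x,\nu\rangle-\langle x,\mu\rangle\langle\nu,a\rangle$ together with the boundary values $\langle x,\nu\rangle=-\cos\theta$ and $\langle x,\mu\rangle=\sin\theta$, the various terms should reorganize into $q\,\varphi_a$ with $q=\tfrac{1}{\sin\theta}+\cot\theta\,h(\mu,\mu)$ as in \eqref{q1}; the factor $\tfrac{1}{\sin\theta}$ is precisely where the umbilicity of the sphere enters. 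The delicate point — and the step I would check most carefully — is the consistent bookkeeping of the two adapted orthonormal bases $\{\nu,\mu\}$ and $\{\oN,\onu\}$ of the normal plane of $\Gamma$, since a sign or angle error there propagates directly into the coefficient $q$.
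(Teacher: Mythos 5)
Your proposal is correct and is essentially the intended argument: the paper states this lemma without proof, deferring entirely to the pointwise computation in \cite[Proposition 3.5]{WX19}, which is exactly what you reconstruct (your building blocks $\Delta_M\langle x,a\rangle=-H\langle\nu,a\rangle$, $\Delta_M\langle x,\nu\rangle=H-\|h\|^2\langle x,\nu\rangle$, $\Delta_M\langle\nu,a\rangle=-\|h\|^2\langle\nu,a\rangle$ and the identity $\Delta_M\langle X_a,\nu\rangle+\|h\|^2\langle X_a,\nu\rangle=H\langle x,a\rangle-(n-1)\langle\nu,a\rangle$ all check out against the paper's sign conventions, and they combine to give \eqref{DeMzeta+BMzeta-ball} and \eqref{DeMPhi-ball} as you describe). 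Your boundary discussion, using $\oN=x$ on $\mbS^{n-1}$, $\oN=-\cos\theta\,\nu+\sin\theta\,\mu$ and the conformal Killing property of $X_a$, is likewise the computation carried out in the cited reference.
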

 
		\
		
		\begin{proof}[Proof of \cref{umbillic} for $\O=\mfB^n$]
		Our starting point is \cref{Prop-capillaryCMC} and \cref{Prop-StableSet}, and we note that these propositions are satisfied by the local minimizers of the free energy functional under volume constraint, see \cref{rmk-local-min}.
	Moreover, we learn from \cref{regularity-thm} that $\vert\vert h\vert\vert\in L^2(M)$, which is crucial in the following proof.

 We shall use $\varphi_a:{\rm reg}M\ra\mfR^1$ defined by \eqref{varphi-a}.
			By virtue of the Minkowski type formula \eqref{MinkowskitypeIneq},  we have
			\begin{align}\label{integralofvarphia}
				\int_M\varphi_a\rd\mcH^{n-1}=0.
			\end{align}
			The integrability conditions \eqref{condi-integrability} for $\varphi_a$ can be verified similarly as in the proof of \cref{Thm-wedgeShaped}.
			Using this test function to test the Poincar\'e-type inequality \eqref{ineq-Poincare}, by virtue of \eqref{DeMzeta+BMzeta-ball} and \eqref{zeta-boudary-ball},  we arrive at
			\begin{align}\label{WX324}
				\int_M \left((n-1)\vert x\vert^2+(n-1)\cos\theta\left<x,\nu\right>-\frac{1}{2}(\vert x\vert^2-1)H\left<x,\nu\right>\right)
				\cdot\left[(n-1)\vert\vert h\vert\vert^2-H^2\right] \rd\mcH^{n-1}\leq0.
			\end{align}
			
			%For the case that $H\equiv0$ on ${\rm reg}M\cap\mbB^n$ and $\theta=\frac{\pi}{2}$, we deduce from \eqref{WX324} that
			%\begin{align*}
			%	\int_M\vert x\vert^2\vert h\vert^2d\mcH^{n-1}\leq0.
			%\end{align*}
			%This means $\vert h\vert\equiv0$ on ${\rm reg}M$, hence ${\rm reg}M$ is totally geodesic ball.
			
			%	If $H\neq0$ on ${\rm reg}M$ or $\theta\neq\frac{\pi}{2}$, we 
			To proceed, we consider the function $\Phi$ defined in \eqref{Phi-ball}.
			We follow closely the approximating argument as the one in the proof of \cref{Thm-wedgeShaped}, integrating ${\rm div}_M\left(\varphi_\ep\nabla^M\left(\frac{1}{2}\Phi^2\right)\right)$ on $M\setminus S'_\ep$ and using the classical divergence theorem. By virtue of \eqref{pw-conv},\eqref{esti-cutoff-1}, the fact that $\vert\vert h\vert\vert\in L^2(M)$, and the fact that $\Phi=0$ on ${\rm reg}\Gamma$, we may send $\ep\searrow0$ and use the dominated convergence theorem to get	\begin{align}\label{WX19-3.27}
				\int_M\De_M\left(\frac{1}{2}\Phi^2\right)\rd\mcH^{n-2}=\int_{\Gamma}\Phi\nabla_\mu\Phi\rd\mcH^{n-2}
				=0.
			\end{align}
			Adding \eqref{WX19-3.27} to \eqref{WX324},
			expanding $\De_M\left(\frac{1}{2}\Phi^2\right)$ and using \eqref{DeMPhi-ball},
			%following the same computations as in the proof of \cite[Theorem 3.1]{WX19},
			we find
			\begin{align}\label{WXfinal}
				0\geq	%\int_M \left((n-1)\vert x\vert^2+(n-1)\cos\theta\left<x,N\right>-\frac{1}{2}(\vert x\vert^2-1)H\left<x,N\right>\right)\notag\\
				%\cdot\left[(n-1)\vert\vert h\vert\vert^2-H^2\right] +\Delta_M(\frac{1}{2}\Phi^2)d\mcH^{n-1}\notag\\
				%=\int_M \left((n-1)\vert x\vert^2+(n-1)\cos\theta\left<x,N\right>-\frac{1}{2}(\vert x\vert^2-1)H\left<x,N\right>\right)\notag\\
				%\cdot\left[(n-1)\vert\vert h\vert\vert^2-H^2\right]+\Phi\Delta_M\Phi+\vert\nabla^M\Phi\vert^2d\mcH^{n-1}\notag\\
				%=
				\int_M(n-1)\vert x^T\vert^2\left((n-1)\vert\vert h\vert\vert^2-H^2\right)+\vert\nabla^M\Phi\vert^2\rd\mcH^{n-1}\geq0,
			\end{align}
			where $x^T$ is the tangential part of $x$ with respect to ${\rm reg}M$ and in the last inequality we have used the fact that $(n-1)\vert\vert h\vert\vert^2-H^2$ is non-negative by virtue of the Cauchy Schwarz inequality. 
			
			From \eqref{WXfinal} we proceed exactly as in the proof of \cite[Theorem 3.1]{WX19} to conclude that
			\begin{align*}
				(n-1)\vert\vert h\vert\vert^2=H^2\quad, \forall x\in {\rm reg}M.
			\end{align*}
			By virtue of the Cauchy-Schwarz inequality again, the equality holds if and only if ${\rm reg}M$ is umbilical in $\mbB^n$.
   
   If the constant mean curvature $H\neq0$, we know that ${\rm reg}M$ is spherical. By applying the same argument as that in the proof of \cref{Thm-wedgeShaped}, we conclude that $M$ must be a spherical cap.
If $H=0$,  we know that ${\rm reg}M$ is flat.
			Similarly, we exclude the possibility of intersecting $(n-1)$-balls by virtue of the fact that $\mcH^{n-3}({\rm sing}M)=0$.
		To exclude the possibility of non-intersecting $(n-1)$-balls, again we use the Robin eigenvalue problem on each $(n-1)$-ball as in the proof of \cref{Thm-wedgeShaped} and construct a function $\zeta$ on $M$ which violates the stability of $M$ if $M$ is not connected.
		In particular, this shows that $M$ must be a single plane, and thus completes the proof.
			
			\end{proof}
		%--------------------------------------------------------------
		\printbibliography
		
		%--------------------
	\end{document}